\numberwithin{equation}{section}
\numberwithin{figure}{section}
\numberwithin{table}{section}
\theoremstyle{plain}
\newtheorem{thm}{\protect\theoremname}[section]
\theoremstyle{plain}
\newtheorem{cor}[thm]{\protect\corollaryname}
\theoremstyle{plain}
\newtheorem{prop}[thm]{\protect\propositionname}
\theoremstyle{remark}
\newtheorem{rem}[thm]{\protect\remarkname}
\theoremstyle{plain}
\newtheorem{lem}[thm]{\protect\lemmaname}
\theoremstyle{definition}
\newtheorem{defn}[thm]{\protect\definitionname}
\newlist{thmstepnv}{enumerate}{4}
\setlist[thmstepnv]{leftmargin=*,align=left,wide,labelwidth=0pt,labelindent=0pt}
\setlist[thmstepnv,1]{label={\itshape {\thmstepname} \arabic*.},ref=\arabic*}
\setlist[thmstepnv,2]{label={\itshape {\thmstepname} {\thethmstepnvi\alph*}.},ref=\thethmstepnvi\alph*}
\setlist[thmstepnv,3]{label={\itshape {\thmstepname\ \alph*.}},ref=\alph*}
\setlist[thmstepnv,4]{label={\itshape {\thmstepname} \arabic*.},ref=\arabic*}
\newcommand{\cref}[1]{\text{\zcref{#1}}}
\providecommand{\corollaryname}{Corollary}
\providecommand{\definitionname}{Definition}
\providecommand{\lemmaname}{Lemma}
\providecommand{\propositionname}{Proposition}
\providecommand{\remarkname}{Remark}
\providecommand{\theoremname}{Theorem}
\providecommand{\thmstepname}{Step}
\global\long\def\supp{\operatorname{supp}}%
\global\long\def\Uniform{\operatorname{Uniform}}%
\global\long\def\dif{\mathrm{d}}%
\global\long\def\e{\mathrm{e}}%
\global\long\def\Var{\operatorname{Var}}%
\global\long\def\id{\operatorname{id}}%
\global\long\def\e{\mathrm{e}}%
\global\long\def\Law{\operatorname{Law}}%
\global\long\def\supp{\operatorname{supp}}%
\global\long\def\dif{\mathrm{d}}%
\global\long\def\eps{\varepsilon}%
\def\viiva{\hspace{.7pt}\vert\hspace{.8pt}}
\global\long\def\EE{\mathbb{E}}%
\global\long\def\RR{\mathbf{R}}%
\global\long\def\ZZ{\mathbb{Z}}%
\global\long\def\NN{\mathbb{N}}%
\definecolor{darkgreen}{rgb}{0.0,0.5,0.0}
\definecolor{indigo}{rgb}{0.3,0,0.5}
\global\long\def\st{\mathrel{:}}%
\begin{document}

\title{Viscous shock fluctuations in KPZ}
\author{Alexander Dunlap\thanks{Department of Mathematics, Duke University, Durham, NC 27708, USA. Email: \href{mailto:dunlap@math.duke.edu}{\url{dunlap@math.duke.edu}}.}\and Evan
Sorensen\thanks{Department of Mathematics, Columbia University, New York, NY 10027, USA. Email: \href{mailto:evan.sorensen@columbia.edu}{\url{evan.sorensen@columbia.edu}}.}}
\maketitle
\begin{abstract}
We study ``V-shaped'' solutions to the KPZ equation, those having
opposite asymptotic slopes $\theta$ and $-\theta$, with $\theta>0$, at positive and negative infinity, respectively. 
Answering
a question of Janjigian, Rassoul-Agha, and Sepp\"al\"ainen, we show that
the spatial increments of V-shaped solutions cannot be statistically
stationary in time. This completes the classification of statistically
time-stationary spatial increments for the KPZ equation by ruling out the last
case left by those authors.

To show that these V-shaped time-stationary measures do not exist, we study the location of the corresponding  ``viscous shock,'' which, roughly speaking, is the location of the bottom of the V. We describe the limiting rescaled fluctuations, and in particular show that the fluctuations of the shock location are not tight, for both stationary and flat initial data.
We also show that if the KPZ equation is started with V-shaped initial data, then the  long-time limits of the time-averaged laws of the spatial increments of the solution are mixtures of the laws of the spatial increments of $x\mapsto B(x)+\theta x$ and $x\mapsto B(x)-\theta x$, where $B$ is a standard two-sided Brownian motion. 
\end{abstract}
\tableofcontents{}

\section{Introduction}

We consider the KPZ equation formally given by
\begin{equation}
\dif h(t,x)=\frac{1}{2}[\Delta h(t,x)+(\partial_{x}h(t,x))^{2}]\dif t+\dif W(t,x),\label{eq:KPZ}
\end{equation}
where $\dif W$ is a space-time white noise. Classically, this equation is ill-posed; indeed, one must go through a limiting argument and subtract off an infinite renormalization term on the right-hand side to make proper sense of the equation. As is standard in the study of the KPZ equation, we avoid this issue by considering the Cole--Hopf (physical) solutions to \cref{eq:KPZ}.
These are given by $h=\log\phi$, where $\phi$ solves the stochastic
heat equation
\begin{equation}
\dif\phi(t,x)=\frac{1}{2}\Delta\phi(t,x)\dif t+\phi(t,x)\dif W(t,x).\label{eq:SHE}
\end{equation}

The long-time behavior of solutions to \cref{eq:KPZ} has been
the subject of significant study in the past several decades. It is
now known that the KPZ equation is a member of the KPZ universality
class \cite{Sasamoto-Spohn-2010b,Sasamoto-Spohn-2010c,Sasamoto-Spohn-2010a,Calabrese-LeDoussal-Rosso-2010,Dotsenko-2012,Balazs-Quastel-Seppalainen-2011,Amir-Corwin-Quastel-2011,BCFV15,KPZ_equation_convergence,heat_and_landscape,Wu-23}, and in
particular that it exhibits nontrivial fluctuations under the ``$1:2:3$
scaling.'' 
In other
words, the rescaled function $\eps h(\eps^{-3}t,\eps^{-2}x)$ converges to
a nontrivial limit, called the KPZ fixed point \cite{KPZfixed}, as
$\eps\to0$. Implicit in this scaling is that the fluctuations of the solutions to \cref{eq:KPZ} grow as $t\to\infty$, and in particular there
are no invariant measures for this equation.

On the other hand, the \emph{recentered} process $h(t,x)-h(t,0)$
is known to have $O(1)$ fluctuations as $t\to\infty$, and indeed
to admit invariant measures. For $\theta\in\RR$, if we let $\mu_{\theta}$
denote the law of standard two-sided Brownian motion with drift $\theta$,
then $\mu_{\theta}$ is invariant under the dynamics of $h(t,x)-h(t,0)$
\cite{Bertini-Giacomin-1997,Funaki-Quastel-2015,Janj-Rass-Sepp-22,Gu-Quastel-24}.
We note that if $\tilde{f}\sim\mu_{\theta}$, then, according to a standard
property of Brownian motion with drift, we have
\[
\lim_{x\to\pm\infty}\frac{\tilde{f}(x)}{x}=\theta.
\]

It has been conjectured that this one-parameter family $(\mu_{\theta})_{\theta\in\RR}$
in fact comprises \emph{all} invariant measures for the recentered
process; see e.g. \cite[Remark 1.1]{Funaki-Quastel-2015}. Great
progress on this question has been made in \cite{Janj-Rass-Sepp-22},
in which it was shown that if $\mu$ is an extremal invariant measure
for the recentered process, then either there is some $\theta\in\RR$
such that $\mu=\mu_{\theta}$, or else if $\tilde{f}\sim\mu$, there exists $\theta > 0$ so that

\begin{equation}
\lim_{x\to\pm\infty}\frac{\tilde{f}(x)}{|x|}=\theta\qquad\text{a.s.}\label{eq:Vshaped}
\end{equation}
We call functions satisfying \cref{eq:Vshaped} ``V-shaped''
since they asymptotically look like the shape of a capital letter
``V.'' The condition $\theta > 0$ is significant. Indeed, \cite[Theorem 3.23]{Janj-Rass-Sepp-22} shows that, if started from an initial condition satisfying \eqref{eq:Vshaped} a.s. for $\theta \le 0$, then the recentered solution to the KPZ equation converges to Brownian motion with zero drift. (Such initial conditions correspond to rarefaction fans.) In particular, it is already known that there are no extremal invariant measures $\mu$ supported on functions satisfying \eqref{eq:Vshaped} for $\theta < 0$.

In \cite[Open Problem 1]{Janj-Rass-Sepp-22}, the authors asked whether
invariant measures supported on functions satisfying \cref{eq:Vshaped} for $\theta > 0$ actually exist.
One of the main results of the present work is that the answer is no. In the
following theorem statement, $\mathcal{C}_{\mathrm{KPZ};0}$ is the
natural function space for the recentered KPZ equation; see \cref{eq:CKPZ0def}
below and the discussion in \cite[Section~2.3]{Janj-Rass-Sepp-22}.
\begin{thm}
\label{thm:no-v-shaped}For $\theta > 0$, there does not exist an invariant measure
 for the recentered process of the
KPZ equation on $\mathcal{C}_{\mathrm{KPZ};0}$ that is supported on functions satisfying \cref{eq:Vshaped}.
\end{thm}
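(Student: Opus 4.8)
The plan is to argue by contradiction and to read the statement off the behavior of the \emph{viscous shock} — the bottom of the V. Suppose that for some $\theta>0$ there is an invariant measure $\mu$ for the recentered process that is supported on functions satisfying \eqref{eq:Vshaped}; passing to an extremal component we may assume $\mu$ is ergodic. Run the KPZ equation with $h(0,\cdot)\sim\mu$, and for each $t$ let $X(t)$ be the location of the viscous shock of $h(t,\cdot)$. The structural point I will use is that $X(t)$ depends on $h(t,\cdot)$ only through the spatial increments $h(t,\cdot)-h(t,0)$ — adding a constant to the height does not move the bottom of the V — so, since $\mu$ is invariant for the recentered process, the law of $X(t)$ equals one fixed probability measure $\nu$ on $\RR$ for every $t\ge 0$.

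From this I extract a tightness statement. Given $\eps>0$, choose $M$ with $\nu([-M,M])>1-\eps$. Then $\PP(|X(0)|\le M)>1-\eps$ and $\PP(|X(t)|\le M)>1-\eps$ for every $t$, so a union bound gives $\PP\big(|X(t)-X(0)|\le 2M\big)>1-2\eps$ uniformly in $t$; hence the family $\{X(t)-X(0):t\ge 0\}$ is tight. Therefore it suffices to show that, started from V-shaped data such as $\mu$, the displacement $X(t)-X(0)$ of the shock is \emph{not} tight; this contradicts the previous paragraph and forces the nonexistence of $\mu$.

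This non-tightness is the dynamical heart of the matter. Because the asymptotic slopes are $+\theta$ on the right and $-\theta$ on the left, the Burgers field $\partial_x h$ develops a \emph{compressive} shock — a stable structure — but its location is a ``soft mode'' subject to no restoring force, so it performs a motion whose fluctuations grow without bound: there is $a(t)\to\infty$ with $(X(t)-X(0))/a(t)$ converging to a nondegenerate limit, and in particular the shock leaves every compact window. (As a by-product, at large $t$ the shock has typically exited far to one side, so near the origin the profile has equilibrated to $\mu_\theta$ or to $\mu_{-\theta}$, and the time-averaged law of the spatial increments converges to a mixture of the laws of the increments of $x\mapsto B(x)+\theta x$ and $x\mapsto B(x)-\theta x$.) To prove the non-tightness I would couple the stochastic heat equation monotonically with the stationary solutions started from $\mu_{\pm\theta}$, which control the profile on the two sides of the shock after matching additive constants far from the origin and using their convergence to $\mu_{\pm\theta}$, and then combine this with quantitative two-sided estimates for the stochastic heat equation, or with the Busemann-function and variational framework of Janjigian--Rassoul-Agha--Seppäläinen, to produce a genuine lower bound on the growth of the fluctuations of $X(t)$.

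The main obstacle is entirely in this last step and has two components. First, one must make ``the location of the viscous shock'' into a bona fide measurable functional on the rough space $\mathcal{C}_{\mathrm{KPZ};0}$ that is invariant under adding constants to the height: the solution is only locally Hölder-$(1/2-\eps)$ in space, so the naive ``argmin of the V'' must be replaced by a robust variational surrogate — a balance point of the two linear asymptotes, or a center of mass of $\partial_x h\mp\theta$ — which has to be shown to be well-defined. Second, and harder, is the quantitative wandering itself: one needs a \emph{lower} bound on the fluctuation growth of $X(t)$, not merely an upper bound, and it must hold with enough uniformity over V-shaped initial data to be applicable when the initial law is the hypothetical $\mu$. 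Carrying this out — identifying the shock profile and its limiting rescaled fluctuations via coupling with $\mu_{\pm\theta}$ and fine analysis of the two-sided stochastic heat equation — is the technical core of the proof.
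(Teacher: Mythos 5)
Your overall strategy---argue by contradiction, and get the contradiction from the unbounded wandering of the viscous shock---is conceptually the same as the paper's intuition. But your execution has a genuine gap, and the paper's proof is structured quite differently precisely to avoid it.

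The paper never defines a ``shock location'' functional $X(t)$ on the space of V-shaped profiles, and this is not an accident. You correctly flag that making ``the bottom of the V'' a measurable, height-shift-invariant functional on $\mathcal{C}_{\mathrm{KPZ};0}$ is a real problem (the argmin of a locally Hölder-$(1/2^-)$ function is neither unique nor continuous in the topology, and a variational surrogate would still need to be shown invariant and well-behaved). The paper sidesteps this entirely: it uses \cref{lem:A_map} to write the hypothetical V-shaped initial datum as $V[h_-,h_+]$ for a pair $(h_-,h_+)\in\mathcal{Y}(\theta)$, and then tracks the \emph{scalar} quantity $h_+(t,0)-h_-(t,0)$, which is always defined. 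The function $\mathfrak b$ (the zero of $h_+-h_-$) is used only in \cref{thm:nuhat,thm:btflucts}, where the initial law guarantees $h_+-h_-$ is strictly increasing; it is not invoked in the proof of \cref{thm:no-v-shaped}.

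The second and more serious gap is the lower bound on fluctuations. You need to show that $X(t)-X(0)$ is not tight \emph{starting from the hypothetical invariant measure $\mu$}, about which you know almost nothing a priori. Your sketch (``couple... with the stationary solutions started from $\mu_{\pm\theta}$... to produce a genuine lower bound'') does not explain how the unknown $\mu$ enters, and the fact that shock fluctuations are $\Theta(t^{1/2})$ for $\nu_\theta$ initial data but $\Theta(t^{1/3})$ for flat initial data (\cref{thm:btflucts}) shows the scaling genuinely depends on the initial law, so one cannot simply ``borrow'' a rate. The paper's route around this is essential: it first proves (by a V-shape argument, not a shock argument) that $h_+(U_T,0)-h_-(U_T,0)$ must be tight under the stationarity hypothesis; it then uses tightness plus \cref{prop:spatially-homog-convergence} and a Krylov--Bogoliubov argument to extract a limiting pair law whose increment marginal is forced to be $\nu_\theta$; and only then does it apply \cref{thm:zeroval-stationarity} (the Gaussian $t^{1/2}$ law for $\nu_\theta$ initial data) to derive the contradiction. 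Your proposal identifies the right intuition but replaces the paper's decomposition-and-reduction-to-$\nu_\theta$ chain with two steps you explicitly leave open, and those are precisely where the work is.
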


As a corollary of this and \cite[Theorem 3.26(ii)]{Janj-Rass-Sepp-22},
we obtain the complete characterization of extremal invariant measures
for the recentered KPZ equation. We note that, in \cite{Janj-Rass-Sepp-22}, they study invariant measures on a slightly different space; that is, the space of equivalence classes of functions, where two functions are equivalent if their difference is a constant. Since our choice of the space $\mathcal{C}_{\mathrm{KPZ};0}$ pins the functions at $0$, the two notions are equivalent. 
\begin{cor}
\label{cor:classification}If $\mu$ is an extremal invariant measure
on $\mathcal{C}_{\mathrm{KPZ};0}$ for the recentered KPZ equation,
then there is some $\theta\in\RR$ such that $\mu=\mu_{\theta}$.
\end{cor}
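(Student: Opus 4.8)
The plan is to obtain the corollary by combining \cref{thm:no-v-shaped} with the partial classification already established in \cite{Janj-Rass-Sepp-22}; the argument is a short dichotomy, so the proof is essentially a matter of assembling the two inputs correctly rather than proving anything new.

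First I would recall the structure of extremal invariant measures for the recentered process from \cite[Theorem 3.26(ii)]{Janj-Rass-Sepp-22} (as summarized in the introduction above): if $\mu$ is an extremal invariant measure on $\mathcal{C}_{\mathrm{KPZ};0}$, then either $\mu = \mu_\theta$ for some $\theta \in \RR$, or else, writing $\tilde f \sim \mu$, there is some $\theta > 0$ for which $\tilde f$ satisfies \cref{eq:Vshaped} almost surely. This already folds in the observation that the regime $\theta < 0$ in \cref{eq:Vshaped} contributes nothing, since by \cite[Theorem 3.23]{Janj-Rass-Sepp-22} solutions started from such initial data recenter to driftless Brownian motion and thus cannot be invariant.

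It then remains only to eliminate the second alternative, and this is exactly the content of \cref{thm:no-v-shaped}: for every $\theta > 0$, no invariant measure on $\mathcal{C}_{\mathrm{KPZ};0}$ can be supported on functions satisfying \cref{eq:Vshaped}, hence in particular no extremal one can be. Therefore the first alternative must hold, giving $\mu = \mu_\theta$ for some $\theta \in \RR$. The substantive obstacle here is \cref{thm:no-v-shaped} itself, which takes up the rest of the paper; the only point to verify in the deduction is that the function space $\mathcal{C}_{\mathrm{KPZ};0}$ and the meaning of ``invariant measure for the recentered process'' are the same in \cref{thm:no-v-shaped} and in the cited classification result, which holds by construction (cf.\ \cref{eq:CKPZ0def} and \cite[Section~2.3]{Janj-Rass-Sepp-22}).
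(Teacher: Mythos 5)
Your proposal is correct and matches the paper exactly: the paper states the corollary as an immediate consequence of \cref{thm:no-v-shaped} together with \cite[Theorem 3.26(ii)]{Janj-Rass-Sepp-22}, which is precisely the dichotomy you assemble. Nothing further is needed.
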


Previous results on asymmetric simple exclusion processes (ASEP) have obtained
complete understandings of the invariant measures; see \cref{subsec:ASEP-comparison}
below. We emphasize, however, that while properly-rescaled ASEP converges
to the KPZ equation, the characterization of invariant measures in
ASEP does not immediately pass to the limit. Indeed, one must rule
out invariant measures that do not arise as scaling limits of invariant
measures for ASEP. This question of characterizing the stationary
measures in the context of the KPZ equation had previously been conjectured
and discussed in several works \cite{Funaki-Quastel-2015,Janj-Rass-Sepp-22,Krug-1992,Spohn-14}
before its final resolution here.

The first author and Ryzhik studied V-shaped solutions to the KPZ
equation, but with $\dif W$ replaced by a noise that is spatially
smooth and white in time, in \cite{Dunlap-Ryzhik-2020}. In fact,
that paper worked with the \emph{gradient} of the KPZ equation, the
stochastic Burgers equation. This is equivalent to the setting we
have been considering since studying the gradient is equivalent to
subtracting the value at $0$. The starting point of the analysis
in \cite{Dunlap-Ryzhik-2020} was the observation (at the level of
the stochastic Burgers equation) that V-shaped solutions to \cref{eq:KPZ}
can be constructed from two solutions to \cref{eq:KPZ}, with
stationary spatial increments, that are driven by the same noise.
(In the smooth-noise setting, the ergodic behavior of such solutions
was studied in \cite{Dunlap-Graham-Ryzhik-21}.) Specifically, if
$h_{+}$ and $h_{-}$ are two solutions to \cref{eq:KPZ} driven
by the same noise, then
\begin{equation}
h_{\mathsf{V}}(t,x)=V[h(t,\cdot)](x)\coloneqq\log\frac{\e^{h_{+}(t,x)}+\e^{h_{-}(t,x)}}{2}\label{eq:Vshapedsolution}
\end{equation}
is also a solution to \cref{eq:KPZ}. If $\theta>0$ and $\lim\limits_{|x|\to\infty}\frac{h_{\pm}(t,x)}{x}=\pm\theta$,
then it is clear from \cref{eq:Vshapedsolution} that $\lim\limits_{|x|\to\infty}\frac{h_{\mathsf V}(t,x)}{|x|}=\theta$,
which means that $h$ is a V-shaped solution.

To study potential V-shaped \emph{stationary} solutions, we write
a centered version of \cref{eq:Vshapedsolution} as
\begin{align}
h_{\mathsf{V}}(t,x)-h_{\mathsf{V}}(t,0) & =\log\frac{\e^{h_{+}(t,x)}+\e^{h_{-}(t,x)}}{\e^{h_{+}(t,0)}+\e^{h_{-}(t,0)}}\nonumber \\
 & =\log\frac{\e^{h_{+}(t,x)-h_{+}(t,0)}+\e^{h_{-}(t,x)-h_{-}(t,0)-(h_{+}(t,0)-h_{-}(t,0))}}{\e^{h_{+}(t,0)-h_{+}(t,0)}+\e^{-(h_{+}(t,0)-h_{-}(t,0))}}.\label{eq:hvintermsofstationaryquantities}
\end{align}
This formula depends only on $h_{+}(t,x)-h_{+}(t,0)$, $h_{-}(t,x)-h_{-}(t,0)$,
and $h_{+}(t,0)-h_{-}(t,0)$. The first two quantities have stationary
versions, but we will see that the last one in fact grows in time
and does not have a stationary distribution. Informally speaking,
this is the ``reason'' for the lack of V-shaped stationary solutions
claimed in \cref{thm:no-v-shaped}.

The jointly stationary solutions of the process $\left(h_{-}(t,x)-h_{-}(t,0),h_{+}(t,x)-h_{+}(t,0)\right)$,  were recently described in the work \cite{GRASS-23}
by Groathouse, Rassoul-Agha, Sepp\"al\"ainen, and the second author. More generally, there is an explicit description of the law of the jointly invariant measures
for the recentered solutions of \cref{eq:KPZ}, with $k$ solutions driven by the same noise
(but with different asymptotic slopes) for any $k\in\NN$.
We restrict our present discussion to $k=2$, as that is what we will
use in the present paper. We also restrict to the case of opposite drifts, noting there is also a description for general choice of drifts. Let $B_{1},B_{2}$ be two independent standard two-sided
Brownian motions (with $B_{1}(0)=B_{2}(0)=0$), and define
\begin{align}
f_{-}(x) & =B_{1}(x)-\theta x,\label{eq:f-}\\
f_{+}(x) & =B_{2}(x)+\theta x+\mathcal{S}_{\theta}(x)-\mathcal{S}_{\theta}(0),\label{eq:f+}
\end{align}
where
\begin{equation}
\mathcal{S}_{\theta}(x)=\log\int_{-\infty}^{x}\exp\left\{ (B_{2}(y)-B_{2}(x))-(B_{1}(y)-B_{1}(x))+2\theta(y-x)\right\} \,\dif y.\label{eq:Stheta}
\end{equation}
We define 
\begin{equation}
\nu_{\theta}=\Law((f_{-},f_{+})).\label{eq:nuthetadef}
\end{equation}
 It is shown in \cite[Theorem 1.1]{GRASS-23} that if $h_{-}$ and $h_{+}$ are
two solutions to \cref{eq:KPZ} with initial data $(h_{-},h_{+})(0,x)\sim\nu_{\theta}$
independent of the noise, then $(h_{-},h_{+})(t,x)-(h_{-},h_{+})(t,0)\sim\nu_{\theta}$
as well. Also, we have $\Law(h_{\pm}(t,\cdot))=\mu_{\pm\theta}$; i.e.,
the marginals of $\nu_{\theta}$ are the laws of two-sided Brownian motions with opposite
drifts. See \cite{OCY2001,MY05} and their references for earlier studies of the integrals of the exponentials of Brownian motion such as those appearing in \cref{eq:Stheta}.

Key to the proof of \cref{thm:no-v-shaped} will be the following
theorem on the fluctuations of $(h_{+}-h_{-})(t,0)$:
\begin{thm}
\label{thm:zeroval-stationarity}Let $\theta>0$, and let $h_{+}$
and $h_{-}$ solve \cref{eq:KPZ} with initial data $(h_{-},h_{+})(0,x)\sim\nu_{\theta}$
independent of the noise. Then we have the convergence in distribution
\begin{equation}
\frac{h_{+}(t,0)-h_{-}(t,0)}{t^{1/2}}\Longrightarrow\mathcal{N}(0,2\theta)\label{eq:zeroval-stationary}
\end{equation}
as $t\to\infty$.
\end{thm}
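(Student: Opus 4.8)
The plan is to pass from $h=\log\phi$ to the stochastic heat equation \cref{eq:SHE}, combine the Feynman--Kac (mild) representation with the explicit description of $\nu_{\theta}$ in \cref{eq:f-,eq:f+,eq:Stheta}, and thereby rewrite $h_{+}(t,0)-h_{-}(t,0)$ as the logarithm of a ratio of two Gaussian-weighted spatial integrals of the point-to-point partition function of \cref{eq:SHE}. Each integral is then evaluated by a Laplace-type (stationary phase) argument: the numerator integral concentrates near the characteristic point $x=\theta t$ and the denominator integral near $x=-\theta t$, and the associated ``main term'' is the value of the corresponding initial data at that point, which carries an independent Gaussian fluctuation of order $t^{1/2}$.

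In detail, write $\phi_{\pm}=\e^{h_{\pm}}$, which solve \cref{eq:SHE} with $\phi_{\pm}(0,\cdot)=\e^{f_{\pm}}$, so that $\phi_{\pm}(t,0)=\int_{\RR}Z(t,0;0,y)\e^{f_{\pm}(y)}\,\dif y$, where $Z(t,x;s,y)$ is the (strictly positive, jointly continuous) propagator of \cref{eq:SHE}; write $Z(t,0;0,y)=p_{t}(y)\mathcal{Z}(t,y)$ with $p_{t}$ the heat kernel, so $\EE[\mathcal{Z}(t,y)]=1$. From \cref{eq:f-,eq:f+,eq:Stheta} one checks directly that, with $W=B_{2}-B_{1}$ and $J_{y}\coloneqq\int_{0}^{\infty}\e^{(W(y-r)-W(y))-2\theta r}\,\dif r$, one has $\e^{\mathcal{S}_{\theta}(y)}=J_{y}$ and hence $\e^{f_{+}(y)}=J_{0}^{-1}\e^{B_{2}(y)+\theta y}J_{y}$ and $\e^{f_{-}(y)}=\e^{B_{1}(y)-\theta y}$. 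Using $p_{t}(y)\e^{\pm\theta y}=(2\pi t)^{-1/2}\e^{-(y\mp\theta t)^{2}/(2t)+\theta^{2}t/2}$ and cancelling the common prefactor $(2\pi t)^{-1/2}\e^{\theta^{2}t/2}$ gives the exact identity
\begin{equation*}
h_{+}(t,0)-h_{-}(t,0)=-\log J_{0}+\log\mathcal{I}_{+}(t)-\log\mathcal{I}_{-}(t),
\end{equation*}
with $\mathcal{I}_{+}(t)=\int_{\RR}\e^{-(y-\theta t)^{2}/(2t)}\,\mathcal{Z}(t,y)\,\e^{B_{2}(y)}\,J_{y}\,\dif y$ and $\mathcal{I}_{-}(t)=\int_{\RR}\e^{-(w+\theta t)^{2}/(2t)}\,\mathcal{Z}(t,w)\,\e^{B_{1}(w)}\,\dif w$ (so $\mathcal{I}_{-}(t)=\sqrt{2\pi t}\,\e^{-\theta^{2}t/2}\phi_{-}(t,0)$, and the analysis of $\mathcal{I}_{-}$ is just a statement about the stationary solution $h_{-}$).

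The heart of the matter is then the Laplace asymptotics
\begin{equation*}
\log\mathcal{I}_{+}(t)=B_{2}(\theta t)-\tfrac{t}{24}+o_{\PP}(t^{1/2}),\qquad\log\mathcal{I}_{-}(t)=B_{1}(-\theta t)-\tfrac{t}{24}+o_{\PP}(t^{1/2}).
\end{equation*}
For $\mathcal{I}_{+}$, I would factor out $\e^{B_{2}(\theta t)}\mathcal{Z}(t,\theta t)$, write $\mathcal{Z}(t,y)=\e^{-t/24+\mathcal{H}(t,y)}$, and show that the remaining integral is $\e^{o_{\PP}(t^{1/2})}$. On the window $|y-\theta t|\le t^{9/10}$ this relies on: a uniform upper bound $\sup_{|y|\le Ct}\mathcal{H}(t,y)=o_{\PP}(t^{1/2})$, which comes from the one-point moderate-deviation upper tail for \cref{eq:SHE} (of the form $\PP(\mathcal{H}(t,y)>m)\le C\e^{-cm^{3/2}/\sqrt{t}}$, uniform over $|y|\le Ct$) together with a union bound and the local Hölder-$\tfrac{1}{2}^{-}$ regularity of $y\mapsto\log\mathcal{Z}(t,y)$; the modulus-of-continuity estimate $\sup_{|y-\theta t|\le t^{9/10}}|B_{2}(y)-B_{2}(\theta t)|=o_{\PP}(t^{1/2})$; and $\sup_{|y-\theta t|\le t^{9/10}}|\log J_{y}|=O_{\PP}(\log t)$, which holds because $y\mapsto\log J_{y}=\mathcal{S}_{\theta}(y)$ is a stationary process with exponential upper and Gaussian lower tails. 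For $t^{9/10}<|y-\theta t|$ with $|y|\le Ct$, the Gaussian weight is already $\le\e^{-t^{4/5}/2}$ and crushes the $\e^{o_{\PP}(t^{1/2})}$-sized integrand; and for $|y|>Ct$ (where $\e^{B_{2}(y)}$ and $J_{y}$ may be large) one combines the Gaussian weight with the moderate-deviation upper tail for $\mathcal{Z}(t,y)$, via a union bound, to see that these contributions are exponentially smaller than the main term --- this is the point where a genuine tail estimate, not merely a second-moment bound, is needed, since $\e^{\mathcal{H}(t,y)}$ is far from its mean. A symmetric analysis gives the claim for $\mathcal{I}_{-}$.

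Combining the displays, $h_{+}(t,0)-h_{-}(t,0)=B_{2}(\theta t)-B_{1}(-\theta t)+o_{\PP}(t^{1/2})$. Since $B_{1}$ and $B_{2}$ are independent standard two-sided Brownian motions and $\theta>0$, the variable $B_{2}(\theta t)-B_{1}(-\theta t)$ has law $\mathcal{N}(0,2\theta t)$ exactly for every $t>0$, so dividing by $t^{1/2}$ and applying Slutsky's theorem yields \cref{eq:zeroval-stationary}. The main obstacle is the Laplace localization --- equivalently, the statement that the stationary solution $h_{\pm}(t,0)$ concentrates, up to an $o_{\PP}(t^{1/2})$ error, on the value of its initial data at the characteristic point $\pm\theta t$ --- which hinges on uniform-in-space moderate-deviation control of the solution of \cref{eq:SHE} over the macroscopic range $|y|\lesssim t$; the remaining ingredients are routine Brownian estimates.
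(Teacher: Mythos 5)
Your proposal is correct in outline but takes a genuinely different and considerably more laborious route than the paper. The paper never opens the Feynman--Kac integrals and avoids Laplace localization entirely: it writes
\[
h_{+}(t,0)-h_{-}(t,0)=\bigl[h_{+}(t,-\theta t)-h_{-}(t,\theta t)\bigr]-\bigl[h_{+}(t,-\theta t)-h_{+}(t,0)-\bigl(h_{-}(t,\theta t)-h_{-}(t,0)\bigr)\bigr],
\]
treats the second bracket by the joint stationarity $\pi_0[\mathbf{h}(t,\cdot)]\sim\nu_\theta$ together with the explicit formulas \crefrange{eq:f-}{eq:Stheta}, which give exactly $B_2(-\theta t)-B_1(\theta t)+O_\PP(1)$ and hence $\mathcal N(0,2\theta)$ after dividing by $t^{1/2}$; and dispatches the first bracket by noting that the shear invariance \cref{eq:h_shear} gives $h_+(t,-\theta t)\overset{\mathrm{law}}{=}h_-(t,\theta t)\overset{\mathrm{law}}{=}h_0(t,0)-\tfrac{\theta^2}{2}t$ (so the bracket has mean zero), then invoking the Bal\'azs--Quastel--Sepp\"al\"ainen bound $\Var(h_0(t,0))\le Ct^{2/3}$ and Chebyshev. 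Thus the paper needs only a second-moment bound on the drift-free stationary height, and no tail estimates on $Z$ at all. Your route instead re-derives a strengthened, sample-wise, two-point version of this control by Laplace-localizing both integrals $\mathcal I_\pm(t)$ simultaneously. This is sound --- your exact identity $h_+(t,0)-h_-(t,0)=-\log J_0+\log\mathcal I_+(t)-\log\mathcal I_-(t)$ is correct, the $t^{9/10}$ window gives the needed $o_\PP(t^{1/2})$ error because the deterministic $-t/24$ in $\log\mathcal Z(t,y)$ cancels between the bulk and the intermediate tail, and your observation that the far tails $|y|\gg t$ need a genuine tail bound rather than a moment bound (since the annealed mean of $\mathcal I_+(t)$ grows exponentially) is exactly right --- but it requires the Corwin--Ghosal moderate-deviation upper tail for $\log Z(t,0\viiva 0,y)$ uniformly over $|y|\lesssim t$ (a tool the paper uses, via \cref{lem:onept}, only for the flat case \cref{thm:zeroval-flat}), plus chaining for the spatial modulus of continuity, plus a union-bound treatment of the far tails over $\mathcal Z$, $B_2$, and $J_y$ all at once. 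The claim $\sup_{|y-\theta t|\le t^{9/10}}|\log J_y|=O_\PP(\log t)$ also needs a chaining argument; it holds because $\log J_y=\mathcal S_\theta(y)$ is stationary with exponential upper and super-exponential lower tails (from the Gamma/inverse-Gamma description of $J_0$), but this should be made explicit. In short, both proofs are correct, but the paper's shear-invariance shortcut achieves the conclusion in a few lines with far lighter machinery, while yours proves a pointwise localization statement that is interesting in its own right but is strictly more than the distributional claim requires.
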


We emphasize that \cref{thm:zeroval-stationarity} is sensitive to the choice of initial data, even at the level of the scaling exponent. Indeed, by contrast, we have the following analogous
result for flat initial data.
\begin{thm}
\label{thm:zeroval-flat}Let $\theta>0$, and let $h_{+}$ and $h_{-}$
solve \cref{eq:KPZ} with initial data $h_{\pm}(0,x)=\pm\theta x$.
Let $X_{1}$ and $X_{2}$ denote two independent Tracy--Widom GOE
random variables. Then we have the convergence in distribution
\begin{equation}
\frac{h_{+}(t,0)-h_{-}(t,0)}{t^{1/3}}\Longrightarrow\frac{X_{1}-X_{2}}{2}\label{eq:GOEconv}
\end{equation}
as $t\to\infty$.
\end{thm}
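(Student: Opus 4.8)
The plan is to use the Galilean (tilt) invariance of \eqref{eq:KPZ} to rewrite $h_{+}(t,0)-h_{-}(t,0)$ in terms of two \emph{flat}-initial-data solutions, and then to show that, although these are built from the same noise, their fluctuations decouple asymptotically by a spatial localization argument.

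First I would record the tilt identity: if $g$ solves \eqref{eq:KPZ} with flat initial data $g(0,\cdot)\equiv0$ and white noise $\dif W$, then for each $\beta\in\RR$ the field $(t,x)\mapsto g(t,x+\beta t)+\beta x+\tfrac{\beta^{2}}{2}t$ solves \eqref{eq:KPZ} with initial data $x\mapsto\beta x$ and is driven by the sheared white noise $\dif W^{(\beta)}$, $\dif W^{(\beta)}(s,x)=\dif W(s,x-\beta s)$, which is a deterministic function of $\dif W$. Applying this with $\beta=\pm\theta$ produces flat-data solutions $g_{\pm}$, driven by $\dif W^{(\pm\theta)}$ (so $\dif W^{(\pm\theta)}(s,x)=\dif W(s,x\mp\theta s)$), with $h_{\pm}(t,0)=g_{\pm}(t,\pm\theta t)+\tfrac{\theta^{2}}{2}t$ (the linear term $\pm\theta x$ vanishes at $x=0$), whence
\[
h_{+}(t,0)-h_{-}(t,0)=g_{+}(t,\theta t)-g_{-}(t,-\theta t).
\]
By spatial stationarity of the flat solution, $g_{\pm}(t,\pm\theta t)$ has the same law as $g_{\pm}(t,0)$, so the one-point limit theorem for the flat KPZ equation --- a consequence of the convergence of \eqref{eq:KPZ} to the KPZ fixed point / directed landscape \cite{KPZ_equation_convergence,heat_and_landscape,Wu-23} together with the GOE formula for the flat fixed point --- gives $t^{-1/3}\big(g_{\pm}(t,\pm\theta t)+\tfrac{t}{24}\big)\Rightarrow\tfrac12 X$ with $X$ Tracy--Widom GOE (the scaling constant being $\tfrac12$). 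Thus \eqref{eq:GOEconv} is exactly the statement that the fluctuations of $g_{+}(t,\theta t)$ and $g_{-}(t,-\theta t)$ become \emph{independent} as $t\to\infty$.

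The reason they decouple is that the fluctuations of $g_{\pm}(t,\pm\theta t)$ are governed by $\dif W$ in a space-time tube of width $\sim t^{2/3}$ joining $(0,\pm\theta t)$ to $(t,0)$, and these two tubes are disjoint except near the shared endpoint $(t,0)$, where the overlap occupies only a short final time interval and contributes $o(t^{1/3})$. To make this rigorous I would fix $\tau=t-t^{3/4}$ and $\rho\in(\tfrac23,\tfrac34)$ and argue in two steps. \emph{Step 1 (temporal localization):} restarting \eqref{eq:KPZ} from flat data at time $\tau$ and using the Cole--Hopf formula, show that $g_{\pm}(t,\pm\theta t)=g_{\pm}(\tau,\pm\theta t)+c_{t}+o_{\PP}(t^{1/3})$ for a common deterministic $c_{t}$ (in fact $c_{t}=-\tfrac{t-\tau}{24}$); the error collects the $O((t-\tau)^{1/3})=O(t^{1/4})$ fluctuations of the flat solution run over $[\tau,t]$ and an $O(t^{1/4+\eps})$ contribution from reweighting the profile $g_{\pm}(\tau,\cdot)$ over the $O((t-\tau)^{2/3})=O(t^{1/2})$ spatial window seen by the restarted polymer, controlled using the Brownian-type spatial regularity of the fixed-time KPZ solution. \emph{Step 2 (spatial localization):} let $\hat{g}_{\pm}$ solve \eqref{eq:KPZ} with flat data and with $\dif W^{(\pm\theta)}$ replaced by its restriction to the strip $\{|x\mp\theta t|\le t^{\rho}\}$ over $[0,\tau]$; using one-point moderate-deviation bounds for the stochastic heat equation and the super-polynomially small probability that the relevant polymers exit a tube of width $\gg\tau^{2/3}$, show that $g_{\pm}(\tau,\pm\theta t)=\hat{g}_{\pm}(\tau,\pm\theta t)+o_{\PP}(t^{1/3})$. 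Since $\dif W^{(\pm\theta)}(s,x)=\dif W(s,x\mp\theta s)$, the noise driving $\hat{g}_{+}(\tau,\theta t)$ is the restriction of $\dif W$ to $\{(s,y):0\le s\le\tau,\ |y-\theta(t-s)|\le t^{\rho}\}$, which lies in $\{y>0\}$ for $t$ large because $\rho<\tfrac34$; likewise the noise driving $\hat{g}_{-}(\tau,-\theta t)$ is supported in $\{y<0\}$. These regions being disjoint, $\hat{g}_{+}(\tau,\theta t)$ and $\hat{g}_{-}(\tau,-\theta t)$ are independent.

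Combining the steps, $h_{+}(t,0)-h_{-}(t,0)=\hat{g}_{+}(\tau,\theta t)-\hat{g}_{-}(\tau,-\theta t)+o_{\PP}(t^{1/3})$ with the two terms independent and, by Step 2 together with spatial stationarity, $\tau^{-1/3}\big(\hat{g}_{\pm}(\tau,\pm\theta t)+\tfrac{\tau}{24}\big)\Rightarrow\tfrac12 X_{1}$ and $\tfrac12 X_{2}$ respectively; since the centering $-\tfrac{\tau}{24}$ cancels in the difference and $\tau^{1/3}/t^{1/3}\to1$, this yields $t^{-1/3}(h_{+}(t,0)-h_{-}(t,0))\Rightarrow\tfrac12(X_{1}-X_{2})$ with $X_{1},X_{2}$ independent, which is \eqref{eq:GOEconv}. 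I expect the main obstacle to be the two localization estimates: quantifying that restarting from flat data, and truncating the noise to a polynomially wide strip, each perturb the solution by only $o(t^{1/3})$. This is where one must combine one-point upper- and lower-tail moderate-deviation bounds for the KPZ equation with fixed-time spatial regularity and a comparison between the full and the modified equation, and choose $\rho$ and $\tau$ so that every error term is $o(t^{1/3})$ while the surviving pieces of $\dif W$ are genuinely disjoint. The tilt identity, the spatial stationarity, and the appeal to the flat-data limit theorem are, by contrast, immediate.
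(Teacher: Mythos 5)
Your overall strategy---shear to flat data, then decouple the two one-point fluctuations by localizing the polymer to disjoint space-time regions---is the right idea and parallels both the paper's argument and the TASEP analogue of Ferrari--Ghosal--Nejjar. Step~1 (trimming the shared time window near $(t,0)$) plays the same role as the paper's shift of the endpoint to $\pm t^{1/2}$ in \cref{eq:fixendpoint}, and the appeal to $\cref{prop:KPZ_eq_conv}$ for the one-point GOE limit is exactly $\cref{lem:TW_conv}$. But Step~2 has a genuine gap, and it is the gap that $\cref{lem:hlSHE}$ in the paper is designed to close.

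The problem is that \emph{setting the noise to zero outside a strip is not the same as restricting the polymer to that strip}, and at positive temperature the difference is not $o(t^{1/3})$: it is of order $t$. Concretely, write $e^{g_\pm(\tau,\,\pm\theta t)}=\int Z(\tau,\pm\theta t\viiva 0,y)\,\dif y$ and $e^{\hat g_\pm(\tau,\,\pm\theta t)}=\int\hat Z(\tau,\pm\theta t\viiva 0,y)\,\dif y$, where $\hat Z$ uses the noise restricted to the strip of width $t^\rho$, $\rho\in(2/3,3/4)$. In the Feynman--Kac picture, the per-unit-time free energy of the polymer in white noise is $-1/24<0$: a path that stays inside the strip for the whole time $\tau$ pays a factor $\approx\e^{-\tau/24}$, while a path that exits the strip after time $T$ pays only $\approx\e^{-T/24}$ for its noisy stretch and then collects weight $\approx 1$ in the noiseless region, at the Brownian entropy cost $\approx\e^{-t^{2\rho}/(2T)}$. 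Optimizing $T/24+t^{2\rho}/(2T)$ gives $T\asymp t^\rho$ and hence an escape contribution $\approx\e^{-c\,t^\rho}$, which for $\rho<1$ dwarfs $\e^{-\tau/24}$. So typically $\hat g_\pm(\tau,\pm\theta t)\asymp -c\,t^\rho$ while $g_\pm(\tau,\pm\theta t)\asymp-\tau/24$, and $\hat g_\pm-g_\pm$ is of order $t$, not $o(t^{1/3})$. (As a sanity check: at $\rho=0$, i.e.\ no noise at all, $\hat g_\pm=0$ exactly while $g_\pm\approx -t/24$.) The polymer localization bounds of Das--Zhu control the probability that the polymer under its \emph{own} Gibbs measure exits the strip, which bounds the ratio of the killed partition function to the full one; they do not bound the difference between the full and the noise-truncated partition function, which is dominated by exactly the escaping paths that the Gibbs measure disfavors. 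This distinction is invisible in the zero-temperature/LPP setting (where free energy is positive and the maximizer never wants to leave the noisy region), which is probably why the analogy with Ferrari--Ghosal--Nejjar makes the step look harmless.

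The fix is to \emph{kill} rather than truncate: replace $\hat Z$ by the partition function of paths conditioned to stay in the relevant region, i.e.\ the SHE propagator with Dirichlet boundary condition. That object differs from $e^{g_\pm}$ only by the factor $Q_{\pm\theta,t,\cdot}(A_{t,\pm})\to1$ (this is $\cref{lem:QAlimit}$), and---this is the nontrivial content of $\cref{lem:hlSHE}$---it solves the half-line stochastic heat equation with Dirichlet boundary conditions, hence is measurable with respect to the noise in the corresponding half-plane, giving exact independence via $\cref{cor:phipmmeasurable}$. Once you replace Step~2 by this conditioning/Dirichlet identity, you are essentially reproducing the paper's proof.
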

The limiting objects obtained in \cref{thm:zeroval-stationarity,thm:zeroval-flat} have previously been obtained in \cite{Ferrari-Fontes-1994b} and \cite{Ferrari-Ghosal-Nejjar-2019}, respectively, as limits of certain roughly-analogous quantities related to ASEP; see \cref{subsec:ASEP-comparison} for a discussion. There, we also discuss the method of proof and contrast from the methods used for ASEP.

\begin{rem} \label{rem:shear_dif}
One may ask about the joint solutions to the KPZ equation with asymptotic drifts that are not opposite. Indeed, \cite{GRASS-23} studies more general measures $\nu_{\theta_1,\theta_2}$, which are jointly invariant and have marginals of Brownian motions with drift $\theta_1 < \theta_2$. By \cite[Theorem 2.11(ii)]{GRASS-23}, if $(f_-,f_+) \sim \nu_{\theta}$ with $\theta = \frac{\theta_2 - \theta_1}{2}$, then $(f_1(x) + \frac{\theta_1 + \theta_2}{2}x,f_2(x) + \frac{\theta_1 + \theta_2}{2}x) \sim \nu_{\theta_1,\theta_2}$. Using this fact and the shear invariance in \cref{eq:Z_shear}, if $(h_-,h_+)(0,x) \sim \nu_{\theta_1,\theta_2}$, then we have the convergence in distribution  
\[
\frac{h_+\left(t, -\frac{\theta_1 + \theta_2}{2}t\right) - h_-\left(t, -\frac{\theta_1 + \theta_2}{2}t\right)}{t^{1/2}}\Longrightarrow \mathcal{N}(0,\theta_2 - \theta_1). 
\]
Also, if we start from the initial condition $h_{-}(0,x) = \theta_1 x$ and $h_+(0,x) = \theta_2 x$, then
\[
\frac{h_+\left(t, -\frac{\theta_1 + \theta_2}{2}t\right) - h_-\left(t, -\frac{\theta_1 + \theta_2}{2}t\right)}{t^{1/3}} \Longrightarrow \frac{X_1 - X_2}{2}.
\]
In this more general setting, the term $-\frac{\theta_1 + \theta_2}{2}$ represents the asymptotic velocity of the shock (which is zero if $\theta_1 = -\theta_2$); see also \cref{rem:shear_shock}.
\end{rem}

\subsection{Long-time behavior of V-shaped solutions}

Given that \cref{thm:no-v-shaped} tells us that there are no
spacetime-stationary V-shaped solutions, it is natural to ask about the behavior of solutions that are started with V-shaped initial
data. The following theorem says that if a solution to the KPZ equation
starts with V-shaped initial data with slopes $-\theta$ and $\theta$
at $-\infty$ and $+\infty$, then the laws of its recentered versions
are tight, and any subsequential limits must be mixtures of $\mu_{-\theta}$
and $\mu_{\theta}$. In the statement, $\mathcal{C}_{\mathrm{KPZ}}$ is the natural function space for the KPZ equation without recentering; see \cref{eq:CKPZdef} below.
\begin{thm}
\label{thm:whatcanyouconvergeto}Let $\theta>0$ and suppose that
$h_{\mathsf{V}}$ is a solution to \cref{eq:KPZ} with initial
condition $h_{\mathsf{V}}(0,\cdot)\in\mathcal{C}_{\mathrm{KPZ}}$
satisfying
\begin{equation}
\lim_{|x|\to\infty}\frac{h_{\mathsf{V}}(0,x)}{|x|}=\theta.\label{eq:Vshaped-1}
\end{equation}
Then the following properties hold:
\begin{enumerate}
\item \label{enu:Vtight}The family of random variables $(h_{\mathsf{V}}(t,\cdot)-h_{\mathsf{V}}(t,0))_{t\ge 0}$
is tight with respect to the topology of $\mathcal{C}_{\mathrm{KPZ};0}$.
\item \label{enu:convtomixture}Let $U_{T}\sim\Uniform([0,T])$ be independent of everything else. If $m$ is a probability measure on $\mathcal{C}_{\mathrm{KPZ};0}$
and $T_{k}\uparrow\infty$ is a sequence such that
\begin{equation}
\Law(h_{\mathsf{V}}(U_{T_{k}},\cdot)-h_{\mathsf{V}}(U_{T_{k}},0))\to m\label{eq:lawsconvtom}
\end{equation}
weakly as $k\to\infty$, then there exists a $\zeta\in[0,1]$ (possibly
depending on the choice of subsequence) such that $m=(1-\zeta)\mu_{-\theta}+\zeta\mu_{\theta}$.
\end{enumerate}
\end{thm}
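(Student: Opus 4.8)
The plan is to realize $h_{\mathsf V}$ through \cref{eq:Vshapedsolution} and then exploit the resulting formula. Write $f_{0}=h_{\mathsf V}(0,\cdot)$, let $\sigma(z)=\e^{z}/(1+\e^{z})$, and set $g_{\pm}(x)=f_{0}(x)+\log 2+\log\sigma(\pm 2\theta x)$, so that $\e^{g_{+}}+\e^{g_{-}}=2\e^{f_{0}}$ and, by \cref{eq:Vshaped-1}, $g_{\pm}\in\mathcal C_{\mathrm{KPZ}}$ with $\lim_{|x|\to\infty}g_{\pm}(x)/x=\pm\theta$. Letting $h_{\pm}$ solve \cref{eq:KPZ} from $g_{\pm}$ with the same noise that drives $h_{\mathsf V}$, linearity of \cref{eq:SHE} gives $h_{\mathsf V}(t,x)=\log\frac{\e^{h_{+}(t,x)}+\e^{h_{-}(t,x)}}{2}$ for all $t\ge 0$. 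Writing $a(t,\cdot)=h_{+}(t,\cdot)-h_{+}(t,0)$, $b(t,\cdot)=h_{-}(t,\cdot)-h_{-}(t,0)$, and $D(t)=h_{+}(t,0)-h_{-}(t,0)$, this becomes
\[
h_{\mathsf V}(t,x)-h_{\mathsf V}(t,0)=\log\bigl(\sigma(D(t))\,\e^{a(t,x)}+(1-\sigma(D(t)))\,\e^{b(t,x)}\bigr).
\]
Since $g_{\pm}$ is a sublinear perturbation of the line $\pm\theta x$, it lies in the domain of attraction of $\mu_{\pm\theta}$ for the recentered flow (\cite{Janj-Rass-Sepp-22}), so $\Law(a(t,\cdot))\to\mu_{\theta}$ and $\Law(b(t,\cdot))\to\mu_{-\theta}$ weakly, and $(a(t,\cdot))_{t\ge 0}$, $(b(t,\cdot))_{t\ge 0}$ are tight in $\mathcal C_{\mathrm{KPZ};0}$ (long-time convergence plus standard short-time a priori bounds). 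The first assertion of the theorem is then immediate: since $\alpha\wedge\beta\le\log(\lambda\e^{\alpha}+(1-\lambda)\e^{\beta})\le\alpha\vee\beta$ for $\lambda\in[0,1]$, the function $h_{\mathsf V}(t,\cdot)-h_{\mathsf V}(t,0)$ is pointwise squeezed between $a(t,\cdot)\wedge b(t,\cdot)$ and $a(t,\cdot)\vee b(t,\cdot)$, while $|h_{\mathsf V}(t,x)-h_{\mathsf V}(t,y)|\le|a(t,x)-a(t,y)|+|b(t,x)-b(t,y)|$; hence the tightness of the two families transfers to $(h_{\mathsf V}(t,\cdot)-h_{\mathsf V}(t,0))_{t\ge 0}$.

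For the second assertion, let $m$ be a subsequential limit along $T_{k}\uparrow\infty$ as in \cref{eq:lawsconvtom}. Since $h_{\mathsf V}(0,\cdot)$ is deterministic, $\Law(h_{\mathsf V}(U_{T},\cdot)-h_{\mathsf V}(U_{T},0))=\frac1T\int_{0}^{T}\Law(h_{\mathsf V}(s,\cdot)-h_{\mathsf V}(s,0))\,\dif s$, so by the Krylov--Bogolyubov argument (Feller property of the recentered flow together with the tightness just proved) $m$ is invariant; by \cref{cor:classification} the $\mu_{\theta'}$, $\theta'\in\RR$, are the only extremal invariant measures, so the ergodic decomposition gives $m=\int_{\RR}\mu_{\theta'}\,\kappa(\dif\theta')$ for a probability measure $\kappa$ on $\RR$. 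It remains to show $\kappa$ is carried by $\{-\theta,\theta\}$. Pass to a further subsequence along which $(h_{\mathsf V}(U_{T_{k}},\cdot)-h_{\mathsf V}(U_{T_{k}},0),\,a(U_{T_{k}},\cdot),\,b(U_{T_{k}},\cdot),\,D(U_{T_{k}}))$ converges in law to $(\tilde f,\tilde a,\tilde b,\tilde D)$ in $\mathcal C_{\mathrm{KPZ};0}^{3}\times(\RR\cup\{\pm\infty\})$, so $\tilde f\sim m$, $\tilde a\sim\mu_{\theta}$, $\tilde b\sim\mu_{-\theta}$.

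Two facts finish the argument. First, $\PP(\tilde D\in\RR)=0$: on $\{|D(t)|\le M\}$ the displayed identity gives, for large $|x|$, $\bigl|h_{\mathsf V}(t,x)-h_{\mathsf V}(t,0)-\theta|x|\bigr|\le\eps|x|$ off an event of probability small uniformly in $t$ --- using the pointwise squeeze for the upper bound, the bound $h_{\mathsf V}(t,x)-h_{\mathsf V}(t,0)\ge\max(a(t,x),b(t,x))-2M-\log 2$ (valid on $\{|D(t)|\le M\}$) for the lower bound, and the uniform-in-time control $\max(a(t,x),b(t,x))\approx\theta|x|$ coming from $\mathcal C_{\mathrm{KPZ};0}$-tightness of $(a(t,\cdot),b(t,\cdot))$ with $a(t,\cdot)\to\mu_{\theta}$, $b(t,\cdot)\to\mu_{-\theta}$. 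Testing against two points $\pm x_{0}$ and letting $k\to\infty$ would give $m\bigl(\{f:|f(x_{0})-\theta x_{0}|\le\eps x_{0}\text{ and }|f(-x_{0})-\theta x_{0}|\le\eps x_{0}\}\bigr)\ge\PP(|\tilde D|\le M)$; but under $\mu_{\theta'}$ the first condition forces $\theta'$ near $\theta$ and the second forces it near $-\theta$, incompatible for $\eps<\theta$, so that probability tends to $0$ as $x_{0}\to\infty$, forcing $\PP(|\tilde D|\le M)=0$ for every $M$, i.e.\ $|D(U_{T_{k}})|\to\infty$ in probability. Second, $\tilde f=\tilde a$ on $\{\tilde D=+\infty\}$ and $\tilde f=\tilde b$ on $\{\tilde D=-\infty\}$, because on a fixed compact $h_{\mathsf V}(t,x)-h_{\mathsf V}(t,0)-a(t,x)=\log\frac{1+\exp(b(t,x)-a(t,x)-D(t))}{1+\exp(-D(t))}$, which is $O(\e^{L-M})$ on $\{D(t)>M\}$ intersected with the high-probability event $\{\sup_{[-R,R]}|a(t,\cdot)-b(t,\cdot)|\le L\}$ (and symmetrically). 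Consequently $m=\zeta\,m^{+}+(1-\zeta)\,m^{-}$ with $\zeta=\PP(\tilde D=+\infty)$, $m^{+}=\Law(\tilde a\mid\tilde D=+\infty)$, $m^{-}=\Law(\tilde b\mid\tilde D=-\infty)$; and $\zeta>0$ forces $m^{+}\le\zeta^{-1}\mu_{\theta}$, hence $m^{+}\ll\mu_{\theta}$, and likewise $m^{-}\ll\mu_{-\theta}$. Evaluating the two representations of $m$ on the invariant set $S_{\theta}=\{f:\lim_{x\to+\infty}f(x)/x=\theta\}$ gives $\kappa(\{\theta\})=m(S_{\theta})=\zeta\,m^{+}(S_{\theta})+(1-\zeta)\,m^{-}(S_{\theta})=\zeta$ (since $\mu_{\theta}(S_{\theta})=1$, $\mu_{-\theta}(S_{\theta})=0$), and similarly $\kappa(\{-\theta\})=1-\zeta$; hence $m=(1-\zeta)\mu_{-\theta}+\zeta\mu_{\theta}$, as required.

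The main obstacle is the first of these facts --- that the shock does not linger at a bounded location, i.e.\ $|h_{+}(U_{T},0)-h_{-}(U_{T},0)|\to\infty$ in probability. For stationary and flat initial data this would follow from \cref{thm:zeroval-stationarity,thm:zeroval-flat}, but for a general V-shaped datum no explicit description is available, so one must extract it --- as above --- from the non-existence of V-shaped invariant measures (\cref{thm:no-v-shaped}) via Krylov--Bogolyubov: if the shock lingered, the time-averaged laws would converge to a (nonexistent) V-shaped invariant measure. Making the ``uniformly V-shaped'' estimate and the local comparisons $\tilde f=\tilde a,\tilde b$ robust enough to survive the weak limits --- the real use of the $\mathcal C_{\mathrm{KPZ};0}$-tightness of $a(t,\cdot),b(t,\cdot)$ --- and checking that $g_{\pm}$ lies in the domain of attraction of $\mu_{\pm\theta}$ are the remaining technical points.
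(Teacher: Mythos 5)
Your argument is essentially correct and proceeds in the same spirit as the paper's, but the two proofs differ in several technical choices worth noting.

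For the decomposition of $h_{\mathsf V}$ into a pair $(h_-,h_+)$, you use the explicit formula $g_\pm=f_0+\log 2+\log\sigma(\pm2\theta x)$, which is cleaner and more concrete than the paper's abstract construction of a measurable map $\mathbf A\colon\mathcal V(\theta)\to\mathcal Y(\theta)$ in \cref{lem:A_map}; the paper needs the abstract version only because $\mathbf A$ must be well-defined and measurable for use with random (not merely deterministic) input, but for this theorem the initial datum is deterministic and your explicit $g_\pm$ works fine. For tightness, the paper writes $\pi_0[h_{\mathsf V}(t,\cdot)]=\tilde V[(1+\e^{D(t)})^{-1},\pi_0[\mathbf h(t,\cdot)]]$ and appeals to continuity of $\tilde V$ together with compactness of $[0,1]$; your pointwise squeeze plus increment bound yields the same conclusion and is a perfectly serviceable alternative, though you should make explicit that the integral estimates defining the $\mathcal C_{\mathrm{KPZ};0}$ topology are controlled via $\e^{a\vee b}\le\e^a+\e^b$.

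For part \ref{enu:convtomixture}, the paper argues by direct contradiction: it supposes $\eta$ charges a set away from $\{\pm\theta\}$, fixes a scale $x=4\log 2/\eps$, and derives incompatible slope estimates for $h_\pm$ via the $\log 2$-comparison in \cref{lem:Vclosetooneofthem}. You instead compactify $\RR$ with $\pm\infty$, pass to a convergent subsequence of the quadruple $(\tilde f,\tilde a,\tilde b,\tilde D)$, show $\PP(\tilde D\in\RR)=0$ by a similar two-point slope argument, then identify $\tilde f$ with $\tilde a$ or $\tilde b$ on $\{\tilde D=\pm\infty\}$ and conclude via an absolute continuity (or simply almost-sure-event) argument applied to the sets $S_{\pm\theta}$. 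This is a genuinely different organization that makes the mixture decomposition more transparent (you literally construct $m^\pm$ and $\zeta$), at the cost of needing to manage the joint weak limit and justify passing the $O(\e^{L-M})$ estimate through it; the paper avoids joint limits entirely by quantifying everything at finite $T_k$ and extracting a numerical contradiction. One small precision point: your ``uniform-in-time control $\max(a(t,x),b(t,x))\approx\theta|x|$'' is not a consequence of $\mathcal C_{\mathrm{KPZ};0}$-tightness per se (that topology sees only subparabolic growth), but rather of the fixed-point convergence $\Law(a(t,x_0))\to\mathcal N(\theta x_0,|x_0|)$ for each fixed $x_0$; this is precisely the quantifier order the paper manages explicitly in \crefrange{eq:ltA}{eq:ltthetax}. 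With that clarification, your argument goes through.
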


Basins of attraction of the invariant measures of the KPZ equation
have been a topic of great interest in the literature. Extensive results
were obtained in \cite{Janj-Rass-Sepp-22}, where it was shown that,
for $\theta>0$, if an initial condition satisfies
\[
\lim_{x\to+\infty}\frac{h(0,x)}{x}=\theta\qquad\text{and}\qquad\lim_{x\to-\infty}\frac{h(0,x)}{x}>-\theta,
\]
then $x\mapsto h(t,x)-h(t,0)$ conveges in distribution to a two-sided
Brownian motion with drift $\theta$. There are similar descriptions
of the basin of attraction in cases when $\theta=0$ and $\theta<0$.
This is analogous to the ergodic theorems of Liggett \cite{Liggett1975}
for ASEP, where descriptions of the basin of attraction are described
depending on the asymptotic density of particles to the left and right
of the origin. Descriptions of basins of attraction have been obtained
for the Burgers equation with various types of non-integrable forcing
in \cite{Bakhtin-Cator-Konstantin-2014,Bakhtin-16,Bakhtin-Li-18,Bakhtin-Li-19,Dunlap-Graham-Ryzhik-21},
and for the KPZ fixed point in \cite{Busa-Sepp-Sore-22a}. However,
left open in all of these works is the limiting behavior of the increment
process when started from an initial condition satisfying \cref{eq:Vshaped},
which is what is considered in \cref{thm:whatcanyouconvergeto}. 

One may ask, in relation to \cref{thm:whatcanyouconvergeto}, whether a stronger result is possible. That is, does there exist a universal  value of $\zeta$ for all subsequential limits. Without further assumptions on the rate of convergence to the slopes $\pm \theta$ at $\pm \infty$, one does not expect to obtain such a statement. In the setting of ASEP, Liggett \cite{Liggett1975} demonstrated the existence of initial V-shaped configurations such that the analogues of the extremal measures $\mu_{-\theta}$ and $\mu_{\theta}$ are both seen as subsequential limits. In the setting of ASEP, one considers configurations $\eta \in \{0,1\}^\ZZ$ of particles and holes having asymptotic densities $\lambda$ and $\rho$ to the left and right of the origin, respectively, and the case $\lambda + \rho = 1$ is the analogue of the V-shaped solution. On the other hand, it was conjectured in \cite{Liggett1975} and proved in \cite{Andjel-Bramson-Liggett-1988} that, for $\lambda+ \rho = 1$, if $\mu$ is a product measure on the space of configurations, and the restrictions of $\mu$ to the left and right of the origin are close enough to i.i.d. Bernoulli measures, in the sense that 
\[
\sum_{x = - \infty}^0 |\mu(\eta: \eta(x) = 1) - \lambda| + \sum_{x = 0}^\infty |\mu(\eta:\eta(x) = 1) - \rho| < \infty,
\]
then the process converges in law to the symmetric mixture of the two i.i.d. Bernoulli measures with intensities $\lambda$ and $\rho$. This condition can be thought of as an approximate symmetry between the configurations on the left and right, leading to a symmetric mixture. Of course, in the setting of \cref{thm:whatcanyouconvergeto}, if the initial V-shaped data $h_{\mathsf{V}}(0,\cdot)$ satisfies $h_{\mathsf{V}}(0,\cdot)\overset{\mathrm{law}}=h_{\mathsf{V}}(0,-\cdot)$, then the symmetry must pass to the limit, and $\zeta=1/2$. In particular, for the two cases considered in this paper, namely $(h_-(0,x),h_+(0,x)) \sim \nu_\theta$ and $(h_-(0,x),h_+(0,x)) = (-\theta x,\theta x)$,  we have $\zeta = \frac{1}{2}$ for all subsequential limits. We leave the precise study of the dependence of the possible subsequential limits on the initial data to future work.

We can also study the behavior of V-shaped solutions started at a large negative time and considered at time $0$. In this case, we can study almost-sure limiting behavior, rather than behavior in law. For each $\theta\in\RR$, it was shown in \cite{Janj-Rass-Sepp-22} that, there is a random process $\overline{\mathbf{f}}=(\overline f_-,\overline f_+)$ (on the same probability space as the noise) such that, if $\mathbf h^T=(h_-^T,h_+^T)$ is a vector of solutions to \cref{eq:KPZ} with initial condition $h^T_\pm(-T,\cdot)$ in the basin of attraction for $\nu_\theta$, then $\lim\limits_{T\to\infty} [\mathbf h^T(0,\cdot)-\mathbf h^T(0,0)] = \overline{\mathbf{f}}$ almost surely. (See \cref{prop:spatially-homog-convergence} below for the precise statement we will use.) We can use this theorem to prove the following, which is a partial solution to \cite[Open Problem~6]{Janj-Rass-Sepp-22}.
\begin{thm}\label{thm:asconvergence}
    There exists an event of probability one on which the following holds. Let  $f_{\mathsf{V}}$ be a continuous function satisfying $\lim\limits_{|x| \to \infty} \frac{f_{\mathsf{V}}(x)}{|x|} = \theta$. Let $h_{\mathsf{V}}^T$ be a solution to \cref{eq:KPZ} with initial condition $h_{\mathsf{V}}^T(-T,x)=f_{\mathsf{V}}(x)$.   For any sequence $T_k\uparrow\infty$, there exists a (possibly random)  subsequence $T_{k_\ell}\uparrow\infty$ and a $\xi\in[0,1]$ such that
    \begin{equation}
        \lim_{\ell\to\infty} [h^{T_{k_\ell}}(0,\cdot)-h^{T_{k_\ell}}(0,0)] = \log(\xi \e^{\overline f_-}+(1-\xi)\e^{\overline f_+})\label{eq:asconvergence}
\end{equation}
in the topology of $\mathcal{C}_{\mathrm{KPZ};0}$.
\end{thm}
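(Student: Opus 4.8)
The plan is to realize the V-shaped solution $h_{\mathsf V}^T$ as the operator $V$ from \cref{eq:Vshapedsolution} applied to a pair of solutions whose initial data have \emph{linear} (rather than V-shaped) asymptotics, so that \cref{prop:spatially-homog-convergence} applies to that pair directly, and then to keep track of the one remaining scalar degree of freedom --- the gap between the two components at the origin --- which that proposition does not pin down. Concretely, given a continuous $f_{\mathsf V}$ with $f_{\mathsf V}(x)/|x|\to\theta$, set $\rho(x)=(1+\e^{2\theta x})^{-1}\in(0,1)$ and
\begin{equation}
g_+(x)=f_{\mathsf V}(x)+\log\bigl(2(1-\rho(x))\bigr),\qquad g_-(x)=f_{\mathsf V}(x)+\log\bigl(2\rho(x)\bigr),
\end{equation}
so that $\e^{g_+(x)}+\e^{g_-(x)}=2\e^{f_{\mathsf V}(x)}$, i.e., $V[(g_-,g_+)]=f_{\mathsf V}$ identically. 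One checks that $g_\pm$ are continuous and that $g_+(x)/x\to\theta$ and $g_-(x)/x\to-\theta$ as $x\to\pm\infty$ (all four limits), so that $(g_-,g_+)$ lies in the basin of attraction of $\nu_\theta$ to which \cref{prop:spatially-homog-convergence} applies; crucially, $g_\pm$ does not depend on $T$. Let $(h_-^T,h_+^T)$ be the solutions of \cref{eq:KPZ} on $[-T,0]$ driven by the given noise with $h_\pm^T(-T,\cdot)=g_\pm$.

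Next, since $\e^{h_+^T}$ and $\e^{h_-^T}$ solve \cref{eq:SHE} with the same noise, so does their average, hence $V[(h_-^T(t,\cdot),h_+^T(t,\cdot))]$ solves \cref{eq:KPZ}; its value at $t=-T$ is $V[(g_-,g_+)]=f_{\mathsf V}=h_{\mathsf V}^T(-T,\cdot)$, so by uniqueness $h_{\mathsf V}^T(0,\cdot)=V[(h_-^T(0,\cdot),h_+^T(0,\cdot))]$. Writing $g_\pm^T(x)=h_\pm^T(0,x)-h_\pm^T(0,0)$ and $\xi^T=\bigl(1+\e^{h_+^T(0,0)-h_-^T(0,0)}\bigr)^{-1}\in(0,1)$, one then obtains the exact identity
\begin{equation}
h_{\mathsf V}^T(0,x)-h_{\mathsf V}^T(0,0)=\log\bigl[(1-\xi^T)\e^{g_+^T(x)}+\xi^T\e^{g_-^T(x)}\bigr].
\end{equation}

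To conclude, \cref{prop:spatially-homog-convergence} provides one event of probability one on which $g_\pm^T\to\overline f_\pm$ in $\mathcal C_{\mathrm{KPZ};0}$ for every initial datum in the basin of $\nu_\theta$, in particular for the $g_\pm$ above, simultaneously over all V-shaped $f_{\mathsf V}$; this will be the event in the statement. On this event, fix $f_{\mathsf V}$ and a sequence $T_k\uparrow\infty$; since $[0,1]$ is compact, pass to a (random) subsequence $T_{k_\ell}$ with $\xi^{T_{k_\ell}}\to\xi\in[0,1]$. The map $(u_+,u_-,s)\mapsto\log[(1-s)\e^{u_+}+s\e^{u_-}]$ is continuous from $\mathcal C_{\mathrm{KPZ};0}^2\times[0,1]$ into $\mathcal C_{\mathrm{KPZ};0}$ in the topology of local uniform convergence --- on each compact interval the argument of the logarithm converges uniformly and stays bounded above and below away from zero uniformly along the sequence, including when $\xi\in\{0,1\}$ --- so the identity above gives
\begin{equation}
h_{\mathsf V}^{T_{k_\ell}}(0,\cdot)-h_{\mathsf V}^{T_{k_\ell}}(0,0)\longrightarrow\log\bigl[(1-\xi)\e^{\overline f_+}+\xi\e^{\overline f_-}\bigr]
\end{equation}
in $\mathcal C_{\mathrm{KPZ};0}$, which is \cref{eq:asconvergence} with this $\xi$.

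The one genuinely delicate point is the first step: for an \emph{arbitrary} continuous V-shaped $f_{\mathsf V}$ one must produce a splitting into two functions with linear asymptotics $\pm\theta x$ that lies in the basin to which \cref{prop:spatially-homog-convergence} applies, and one must check that the probability-one event furnished by that proposition can be taken uniform over all such splittings (equivalently, over all V-shaped initial data); the explicit $g_\pm$ above are designed so that this reduces to facts already established. Granting that, the remainder is the elementary identity for $V$ of a sum of exponentials together with a compactness-and-continuity argument, and no fluctuation estimates enter.
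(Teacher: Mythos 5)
Your proof is correct and follows the same high-level strategy as the paper: realize $f_{\mathsf V}$ as $V$ applied to a pair in $\mathcal Y(\theta)$ that does not depend on $T$, push the identity through the KPZ dynamics via \cref{prop:Vsolves}, invoke the almost-sure convergence of \cref{prop:spatially-homog-convergence} (whose probability-one event is already quantified uniformly over initial data in $\mathcal Y(\theta)$, so no extra uniformity argument is needed), pass to a subsequence along which the scalar $\xi^T=(1+\e^{h_+^T(0,0)-h_-^T(0,0)})^{-1}$ converges, and conclude by continuity of $\tilde V$. The one genuine difference is the choice of decomposition: the paper uses the map $\mathbf A$ from \cref{lem:A_map}, which is piecewise-defined around the rightmost minimizer of $f_{\mathsf V}$ and involves auxiliary constants $C_1,C_2$, whereas you use the closed-form logistic splitting $g_\pm = f_{\mathsf V}+\log(2(1\mp\tanh(\theta x))/1)$ (equivalently, via $\rho(x)=(1+\e^{2\theta x})^{-1}$). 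Your decomposition is simpler and manifestly measurable, and the asymptotics $g_\pm(x)/x\to\pm\theta$ follow immediately; the paper's $\mathbf A$ is also used in \cref{thm:no-v-shaped,thm:whatcanyouconvergeto}, so the authors presumably preferred a single reusable lemma, but for this theorem alone your construction is cleaner. One small remark on the continuity step: the topology on $\mathcal C_{\mathrm{KPZ};0}$ also requires convergence of the integrals $\int\e^{-ax^2+f}\,\dif x$, not just local uniform convergence; this is handled here since the integral against $\tilde V[\xi,g_-,g_+]$ is the same convex combination of the integrals against $g_\pm$, which converge because $g_\pm^T\to\overline f_\pm$ in $\mathcal C_{\mathrm{KPZ};0}$. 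Your argument covers this implicitly, and the paper glosses over it in exactly the same way by citing continuity of $\tilde V$.
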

In this theorem, we expect that in general $\xi$ will depend on the choice of subsequence. Indeed, we expect there will be subsequences with $\xi \not\in \{0,1\}$, even though \cref{thm:whatcanyouconvergeto} suggests that $\xi$ should be either $0$ or $1$ for ``typical'' sequences. This is because, even though we expect the shock location to typically be large, it may oscillate from large negative to large positive, and hence there may be infinite sequences of times for which it is of order $1$. We expect that the methods of this paper could be used to prove stronger statements in this direction in the case of particularly symmetric initial data, such as when the initial data is taken to be $\theta |x|$ for some $\theta>0$, or when it is taken to be an a random ``almost-stationary''' V-shaped solution as considered below.

\subsection{The reference frame of the shock}

While the paper \cite{Dunlap-Ryzhik-2020} does not consider the existence
of V-shaped stationary solutions in the smooth-noise setting, it does show that
there are invariant measures for V-shaped solutions if they are recentered
not just vertically but also horizontally. More precisely, in that
paper, it was shown that for solutions $h_{+}$
and $h_{-}$ to \cref{eq:KPZ} with different asymptotic slopes,
if we define $h_{\mathsf{V}}$ by \cref{eq:Vshapedsolution},
then there is a process $(b_{t})_{t\ge0}$ such that the process 
\[
x\mapsto(h_{-},h_{+},h_{\mathsf{V}})(t,b_{t}+x)-(h_{-},h_{+},h_{\mathsf{V}})(t,b_{t}).
\]
admits an invariant measure. In other words, the \emph{shape} of the
V-shaped solution is preserved in time, even if the \emph{location}
of the center of the V moves as time advances. The shock location
$b_{t}$ interacts with the local geometry of $h_{-}$ and $h_{+}$,
so the projection of this invariant measure onto the first two coordinates
is not the same as $\nu_{\theta}$. The following theorem gives this
tilt and the precise statement of the stationarity in the space-time white noise case. It is the spacetime-white-noise analogue of \cite[Theorem 1.1]{Dunlap-Ryzhik-2020}. It is also analogous to the result \cite[Theorem 2.3]{Ferrari-Kipnis-Saada-1991} for ASEP. There, the description of the stationary measure is much more complicated; it is constructed as an average of empirical measures seen from a second-class particle. 
\begin{thm}
\label{thm:nuhat}We define the measure $\hat{\nu}_{\theta}$ that
is absolutely continuous with respect to $\nu_{\theta}$ with Radon--Nikodym
derivative
\begin{equation}
\frac{\dif\hat{\nu}_{\theta}}{\dif\nu_{\theta}}(f_{-},f_{+})=\frac{1}{2\theta}\partial_{x}(f_{+}-f_{-})(0).\label{eq:RNderiv}
\end{equation}
Let $(h_{-},h_{+},h_{\mathsf{V}})$ be a vector of solutions to \cref{eq:KPZ}
with initial condition $(h_{-},h_{+})(0,\cdot)\sim\hat{\nu}_{\theta}$
and $h_{\mathsf{V}}(0,\cdot)=V[(h_{-},h_{+})(0,\cdot)]$. Then the
following statements hold.
\begin{enumerate}
\item There is a random process $(b_{t})_{t\ge0}$ such that, for each $t\ge0$,
$b_{t}$ is the unique $x\in\RR$ such that 
\begin{equation}
h_{-}(t,x)=h_{+}(t,x).\label{eq:hplusishminus}
\end{equation}
\item For each $t\ge0$, we have
\[
\Law\left((h_{-},h_{+},h_{\mathsf{V}})(t,b_{t}+\cdot)-(h_{-},h_{+},h_{\mathsf{V}})(t,\cdot)\right)=\hat{\nu}_{\theta}.
\]
\end{enumerate}
\end{thm}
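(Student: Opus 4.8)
The plan is to mirror the structure of \cite[Theorem 1.1]{Dunlap-Ryzhik-2020}, but carried out directly in the space-time white noise setting using the Cole--Hopf transform. First I would establish part (1): given $(h_-,h_+)(0,\cdot)\sim\hat\nu_\theta$, the difference $g(t,x)\coloneqq h_+(t,x)-h_-(t,x)$ is, for each fixed $t$, a continuous function with $\lim_{x\to+\infty} g(t,x)/x = 2\theta > 0$ and $\lim_{x\to-\infty} g(t,x)/x = -2\theta$, hence $g(t,x)\to+\infty$ as $x\to-\infty$ and $g(t,x)\to-\infty$ as $x\to+\infty$ (using the slope conventions $h_\pm\to\pm\theta|x|$-type growth coming from the marginals $\mu_{\pm\theta}$). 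So $g(t,\cdot)$ has at least one zero. For \emph{uniqueness} of the zero one cannot argue pathwise monotonicity; instead I would invoke the structure of $\nu_\theta$ together with a strict-concavity/strict-ordering property of the Cole--Hopf solution map: because $\phi_\pm = \e^{h_\pm}$ solve the same SHE with different (ordered-at-infinity) data and the SHE has a strong maximum principle, the set $\{x: \phi_+(t,x) = \phi_-(t,x)\}$ is a single point for a.e.\ $t$; one then upgrades "a.e.\ $t$" to "every $t$" using continuity in $t$ of $(h_-,h_+)(t,\cdot)$ in $\mathcal C_{\mathrm{KPZ}}$ and the fact that the two asymptotic slopes are strictly separated, which forbids the zero set from being a nondegenerate interval. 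This defines $b_t$, and measurability/adaptedness of $t\mapsto b_t$ follows from joint continuity.

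The heart is part (2), the invariance of $\hat\nu_\theta$ under the "seen-from-the-shock" dynamics. The strategy is a change-of-measure (Radon--Nikodym / Girsanov-type) argument at the level of the increment process. Let $P_t$ denote the Markov semigroup of the recentered \emph{pair} process $(h_-,h_+)(t,\cdot)-(h_-,h_+)(t,0)$ on (a quotient of) $\mathcal C_{\mathrm{KPZ}}^2$; by \cite[Theorem 1.1]{GRASS-23}, $\nu_\theta$ is $P_t$-invariant. I would first show that the "seen-from-the-shock" pair process $\Theta_t \coloneqq (h_-,h_+)(t,b_t+\cdot) - (h_-,h_+)(t,b_t+0)$ is itself Markov — this follows because $b_t$ is a deterministic functional of the current profile (the unique zero of $g(t,\cdot)$) and the KPZ/SHE dynamics is invariant under spatial translation, so shifting the frame to $b_t$ commutes with the dynamics up to the translation, and the translation is reconstructible from $\Theta_t$. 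Then I would identify the generator (or transition kernel) of $\Theta_t$ as a Doob-type $h$-transform of that of the original recentered pair process, with weight proportional to $\partial_x(f_+-f_-)(0)$: intuitively, the shock is "more likely" to sit at $x$ when the profiles cross more steeply there, and the factor $\tfrac{1}{2\theta}\partial_x(f_+-f_-)(0)$ is exactly the (suitably normalized) occupation density / Palm-type reweighting converting the translation-averaged law of $\nu_\theta$ into the law of the environment at the crossing point. Concretely, for a test functional $F$, one writes $\EE^{\hat\nu_\theta}[F(\Theta_t)]$, uses the identity $b_t\in dx \iff g(t,x) = 0$ to unfold the shock location via a Kac--Rice / co-area change of variables (the Jacobian producing precisely $|\partial_x g(t,b_t)| = \partial_x g(t,b_t)$, the sign being fixed because $g$ decreases through its unique zero), reduces to an integral against $\nu_\theta$ of $F$ composed with a spatial shift weighted by $\partial_x g(0)$, and then invokes $P_t$-invariance of $\nu_\theta$ plus the spatial stationarity of $\nu_\theta$'s increments to close the loop back to $\hat\nu_\theta$. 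Finally, one checks $\hat\nu_\theta$ is a genuine probability measure, i.e.\ $\EE^{\nu_\theta}[\tfrac{1}{2\theta}\partial_x(f_+-f_-)(0)] = 1$; since $f_-' (0)$ has the law of a drift-$(-\theta)$ Brownian increment (mean $-\theta$ in the appropriate distributional-derivative/mollified sense) and $f_+'(0)$ similarly has mean $+\theta$ plus the contribution of $\mathcal S_\theta'(0)$, a direct computation using the explicit form \cref{eq:f-}--\cref{eq:Stheta} gives $\EE^{\nu_\theta}[\partial_x(f_+-f_-)(0)] = 2\theta$, as needed — and positivity of the density is automatic since $\partial_x g(0) > 0$ on the event that the shock is at $0$, which is the relevant one after the Palm reweighting.

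The main obstacle, I expect, is making the Kac--Rice/co-area unfolding of the random shock location $b_t$ rigorous: $g(t,\cdot)$ is only Hölder-$\tfrac12^-$ in $x$, so $\partial_x g(t,\cdot)$ is a distribution, not a function, and "the Jacobian $\partial_x g(t,b_t)$" must be interpreted correctly — presumably via the local-time / occupation-density formula for the continuous semimartingale-like process $x\mapsto g(t,x)$, or by first working with a mollified noise, proving the identity there, and passing to the limit using the convergence results for the KPZ equation cited in the introduction. Relatedly, one must verify that the reweighting by $\partial_x(f_+-f_-)(0)$ in \cref{eq:RNderiv} is well-defined under $\nu_\theta$ (the derivative at a single point of a Brownian-type path requires interpreting it as the Radon--Nikodym derivative of a measure, or as a limit of difference quotients that converges in $L^1(\nu_\theta)$); the explicit Brownian-plus-$\mathcal S_\theta$ structure of $\nu_\theta$ should make this tractable but it is the technical crux. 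Once the unfolding identity and the well-posedness of the density are in hand, the invariance follows formally from translation-invariance of the dynamics and $P_t$-invariance of $\nu_\theta$.
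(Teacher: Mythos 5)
Your proposal identifies the right intuition (Palm-type reweighting of $\nu_\theta$ by the local steepness of the crossing, and translation-invariance of the dynamics), but it misses two structural facts that the paper leans on heavily, and as a result both parts of your argument have real gaps.

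\textbf{Part (1): you do not need, and should not use, a maximum-principle argument.} You write that ``for uniqueness of the zero one cannot argue pathwise monotonicity,'' and propose a strong maximum principle for the SHE giving a single crossing for a.e.\ $t$, then upgrading to all $t$ by continuity. This is not how the paper proceeds, and your claim is false: the paper \emph{does} argue pathwise monotonicity. The measure $\nu_\theta$ is supported on the set $\mathcal{X}_0(\theta)$ of pairs with $f_+-f_-$ strictly increasing (\cref{eq:alwaysinX0}), and \cref{prop:Xpreserved} shows that this property is preserved by the coupled KPZ dynamics for \emph{every} $t>s$, via an explicit rearrangement of the double integral for $(h_+-h_-)(t,x_2)-(h_+-h_-)(t,x_1)$ together with the strict FKG-type inequality $Z(t,x_1\viiva s,y_1)Z(t,x_2\viiva s,y_2)>Z(t,x_1\viiva s,y_2)Z(t,x_2\viiva s,y_1)$ for $x_1<x_2$, $y_1<y_2$ from \cite[Theorem 2.17]{Alb-Janj-Rass-Sepp-22}. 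So $h_+(t,\cdot)-h_-(t,\cdot)$ is a strictly increasing continuous bijection of $\RR$ for every $t$, and $b_t$ is defined and unique pathwise. The a.e.-to-every-$t$ upgrade you sketch is both harder and unnecessary.

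\textbf{Part (2): the ``technical crux'' you flag is not actually there, and your Kac--Rice unfolding is not the paper's route.} You worry that $\partial_x(f_+-f_-)(0)$ must be interpreted distributionally or via local times/mollification since $f_\pm$ are Brownian-like. But the paper's formula \cref{eq:fplusfminus} shows that under $\nu_\theta$,
\[
(f_+-f_-)(x)=\log\frac{\int_{-\infty}^x\e^{B_2(y)-B_1(y)+2\theta y}\,\dif y}{\int_{-\infty}^0\e^{B_2(y)-B_1(y)+2\theta y}\,\dif y},
\]
which is $C^1$ (in fact smooth) in $x$ almost surely even though $f_-$ and $f_+$ individually are not; the derivative exists classically and equals the positive random variable $R$ in \cref{eq:Rdef}. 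This differentiability is not a side technicality---it is the structural fact that makes the whole theorem clean, and being unaware of it makes your ``Jacobian of the crossing'' step both harder than necessary and underspecified. The paper avoids Kac--Rice/co-area entirely: \cref{lem:nuhatexpectation} realizes $\EE_{\hat\nu_\theta}[F]$ as the $L\to\infty$ limit of $\fint_0^L\EE_{\nu_\theta}[F(\pi_{\mathrm{Sh}}[f_-,f_+-\zeta])]\,\dif\zeta$, via the change of variables $\zeta=(f_+-f_-)(z)$ (legitimate precisely because of the differentiability above), the shift-covariance of $\mathcal D$ from \cref{lem:Dshift}, and the spatial ergodic theorem. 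The invariance then follows by interchanging this averaging with the dynamics, which commutes because the KPZ flow is invariant under the vertical shift $h_+\mapsto h_+-\zeta$ and because $\pi_{\mathrm{Sh}}[\mathbf f]=\pi_{\mathrm{Sh}}[\mathbf f+(c,c)]$, followed by the $P_t$-invariance of $\nu_\theta$ and one more application of \cref{lem:nuhatexpectation}. Your Doob $h$-transform / generator-level framing is plausible in spirit but you never supply the intertwining identity that would make it a proof, whereas the paper's one-parameter-averaging argument sidesteps the need for any identification of generators.
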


The proof of \cref{thm:nuhat} follows that of \cite[Theorem 1.1]{Dunlap-Ryzhik-2020}.
The only technical point in this case is that, because $(f_{-},f_{+})\sim\nu_{\theta}$
are not differentiable processes, one may ask whether the Radon--Nikodym
derivative \cref{eq:RNderiv} is well-defined. This is in fact
not an issue since the difference $f_{+}-f_{-}$ is differentiable
almost surely, even though $f_{-}$ and $f_{+}$ are individually
not differentiable. This can be seen from the formulas \zcref[range]{eq:f-,eq:Stheta}:
we can write
\begin{align}
 (f_{+}-f_{-})(x)  &=B_{2}(x)-B_{1}(x)+2\theta x+\mathcal{S}_{\theta}(x)-\mathcal{S}_{\theta}(0)\nonumber \\
  &=B_{2}(x)-B_{1}(x)+2\theta x\nonumber\\&\qquad+\log\frac{\int_{-\infty}^{x}\exp\left\{ (B_{2}(y)-B_{2}(x))-(B_{1}(y)-B_{1}(x))+2\theta(y-x)\right\} \,\dif y}{\int_{-\infty}^{0}\exp\left\{ B_{2}(y)-B_{1}(y)+2\theta y\right\} \,\dif y}\nonumber \\
 &=\log\frac{\int_{-\infty}^{x}\exp\left\{ B_{2}(y)-B_{1}(y)+2\theta y\right\} \,\dif y}{\int_{-\infty}^{0}\exp\left\{ B_{2}(y)-B_{1}(y)+2\theta y\right\} \,\dif y},\label{eq:fplusfminus}
\end{align}
which is evidently differentiable in $x$. Indeed, the derivative is given by
\begin{equation}
\partial_{x}(f_{+}-f_{-})(x)=\left(\int_{-\infty}^{x}\exp\left\{ (B_{2}-B_1)(y)-(B_2-B_1)(x)+2\theta (y-x)\right\} \,\dif y\right)^{-1}.\label{eq:deriv-intro}
\end{equation}
As expected, this expression is statistically stationary in $x$. In facts, it is known to be a Gamma-distributed random variable; see \cref{eq:Rdef}ff.\ below.

The work \cite{Dunlap-Ryzhik-2020} did not address the fluctuations
of $b_{t}$. In the present setting, by contrast, we are able to do
this. In fact, because $h_{-}$ and $h_{+}$ both look linear on large
scales, the fluctuations of $b_{t}$ are closely related to the fluctuations
of $h_{+}(t,0)-h_{-}(t,0)$ discussed in \cref{thm:zeroval-stationarity,thm:zeroval-flat}. We state the following theorem
on the location of the shock for both the stationary and flat initial
conditions covered in those two theorems, as well as the shock-reference-frame-stationary
initial condition discussed in \cref{thm:nuhat}.
\begin{thm}
\label{thm:btflucts}Let $\theta>0$.
\begin{enumerate}
\item \label{enu:stationaryic}Let $h_{+}$ and $h_{-}$ solve \cref{eq:KPZ}
with initial data $(h_{-},h_{+})(0,x)\sim\nu_{\theta}$ independent
of the noise. For each $t\ge0$, there is a unique $b_{t}\in\RR$
such that \cref{eq:hplusishminus} holds, and we have the convergence
in distribution
\begin{equation}
t^{-1/2}b_{t}\Longrightarrow\mathcal{N}(0,(2\theta)^{-1}).\label{eq:stationary-bt}
\end{equation}
\item \label{enu:flatic}Let $h_{+}$ and $h_{-}$ solve \cref{eq:KPZ}
with initial data $h_{\pm}(0,x)=\pm\theta x$. For each $t\ge0$,
there is a unique $b_{t}\in\RR$ such that \cref{eq:hplusishminus}
holds, and we have the convergence in distribution
\begin{equation}
t^{-1/3}b_{t}\Longrightarrow\frac{1}{4\theta}(X_{1}-X_{2}),\label{eq:flat-bt}
\end{equation}
where $X_{1}$ and $X_{2}$ are independent Tracy--Widom GOE random
variables.
\item \label{enu:tiltedic}Let $h_{+}$ and $h_{-}$ solve \cref{eq:KPZ}
with initial data $(h_{-},h_{+})(0,x)\sim\hat{\nu}_{\theta}$ independent
of the noise. For each $t\ge0$, there is a unique $b_{t}\in\RR$
such that \cref{eq:hplusishminus} holds. We have the convergence
in distribution 
\begin{equation}
t^{-1/2}[h_{+}(t,0)-h_{-}(t,0)]\Longrightarrow\mathcal{N}(0,2\theta)\label{eq:shockreferenceframe-zerodiff}
\end{equation}
and
\begin{equation}
t^{-1/2}b_{t}\Longrightarrow\mathcal{N}(0,(2\theta)^{-1}).\label{eq:shockreferenceframe-bt}
\end{equation}
\end{enumerate}
\end{thm}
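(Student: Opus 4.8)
The plan is to exploit the monotonicity in $x$ of $(h_+-h_-)(t,\cdot)$ to rewrite the location of the shock as a level-crossing event, and thereby reduce the fluctuations of $b_t$ to those of $h_+(t,0)-h_-(t,0)$ treated in \cref{thm:zeroval-stationarity,thm:zeroval-flat} (and in \eqref{eq:shockreferenceframe-zerodiff}). For fixed $t>0$ I would first show that $x\mapsto(h_+-h_-)(t,x)$ is continuous, strictly increasing, and surjective onto $\RR$, so that its unique zero is $b_t$. Surjectivity follows from $h_\pm(t,x)/x\to\pm\theta$, which forces $(h_+-h_-)(t,x)\to\pm\infty$ as $x\to\pm\infty$; this asymptotic-slope property holds for all three initial conditions (for $\nu_\theta$ and $\hat\nu_\theta$ it is built into the Brownian-with-drift marginals, and for flat data it is preserved by the dynamics). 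For strict monotonicity: in the $\nu_\theta$ and $\hat\nu_\theta$ cases the spatial increment process $(h_+-h_-)(t,\cdot)-(h_+-h_-)(t,0)$ has, by the stationarity of \cite{GRASS-23}, the law of $(f_+-f_-)$ under $\nu_\theta$ (resp.\ under $\hat\nu_\theta\ll\nu_\theta$), and the explicit formula \eqref{eq:fplusfminus} exhibits this as strictly increasing; in the flat case one notes that $\psi:=\e^{h_+}-\e^{h_-}=\phi_+-\phi_-$ solves the linear stochastic heat equation with initial datum $2\sinh(\theta x)$, which has a single sign change, and invokes the Sturmian principle that the number of spatial sign changes of a solution of a parabolic equation does not increase in time (equivalently one may pass to $u:=\e^{h_+-h_-}-1$, which solves $\partial_t u=\tfrac12\partial_x^2 u+(\partial_x h_-)\partial_x u$, a parabolic equation with no zeroth-order term). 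In the $\hat\nu_\theta$ case uniqueness of $b_t$ is in any event part of \cref{thm:nuhat}.

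\textbf{Step 2 (reduction; the $\nu_\theta$ and $\hat\nu_\theta$ cases).} Monotonicity gives, for every $a\in\RR$, the identity of events $\{b_t\le a\}=\{(h_+-h_-)(t,a)\ge0\}$. For $\nu_\theta$ initial data, take $a=ct^{1/2}$: the increment $(h_+-h_-)(t,ct^{1/2})-(h_+-h_-)(t,0)$ has the law of $(f_+-f_-)(ct^{1/2})=2\theta ct^{1/2}+g(ct^{1/2})$, where $g(x):=(f_+-f_-)(x)-2\theta x$; a substitution in \eqref{eq:fplusfminus} writes $g(x)=(B_2-B_1)(x)+\mathcal T(x)$ with $\sup_x\EE[\mathcal T(x)^2]<\infty$ (an exponential functional of Brownian motion), so $\EE[g(x)^2]=O(|x|)$ and $t^{-1/2}g(ct^{1/2})\to0$ in probability. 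By \cref{thm:zeroval-stationarity} and Slutsky, $t^{-1/2}(h_+-h_-)(t,ct^{1/2})\Longrightarrow\mathcal N(2\theta c,2\theta)$, and since its distribution function is continuous at $0$,
\[
\PP\bigl(t^{-1/2}b_t\le c\bigr)=\PP\bigl((h_+-h_-)(t,ct^{1/2})\ge0\bigr)\longrightarrow\PP\bigl(\mathcal N(0,2\theta)\ge-2\theta c\bigr)=\PP\bigl(\mathcal N(0,(2\theta)^{-1})\le c\bigr),
\]
using the symmetry of the Gaussian and $\mathcal N(0,2\theta)/(2\theta)\overset{d}{=}\mathcal N(0,(2\theta)^{-1})$; this is \eqref{eq:stationary-bt}. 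The $\hat\nu_\theta$ case is the identical computation once \eqref{eq:shockreferenceframe-zerodiff} is in hand: the estimate $t^{-1/2}g(ct^{1/2})\to0$ transfers from $\nu_\theta$ to $\hat\nu_\theta$ by Cauchy--Schwarz, since $\tfrac1{2\theta}\partial_x(f_+-f_-)(0)$ lies in every $L^p(\nu_\theta)$ (by \eqref{eq:fplusfminus} it is the reciprocal of an exponential functional of Brownian motion, in fact Gamma-distributed by Dufresne's identity), and combining with \eqref{eq:shockreferenceframe-zerodiff} gives \eqref{eq:shockreferenceframe-bt}.

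\textbf{Step 3 (the flat case).} Now take $a=ct^{1/3}$ and split $(h_+-h_-)(t,ct^{1/3})=(h_+-h_-)(t,0)+[h_+(t,ct^{1/3})-h_+(t,0)]-[h_-(t,ct^{1/3})-h_-(t,0)]$. For flat data the spatial increments over a window of size $t^{1/3}$ concentrate on their deterministic slope, $h_\pm(t,ct^{1/3})-h_\pm(t,0)=\pm\theta ct^{1/3}+o(t^{1/3})$ in probability, which follows from a uniform Brownian-scale bound of the form $\EE[(h_\pm(t,x)-h_\pm(t,0)\mp\theta x)^2]=O(|x|)$ for $|x|\le t$. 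By \cref{thm:zeroval-flat} and Slutsky, $t^{-1/3}(h_+-h_-)(t,ct^{1/3})\Longrightarrow 2\theta c+\tfrac12(X_1-X_2)$, so
\[
\PP\bigl(t^{-1/3}b_t\le c\bigr)\longrightarrow\PP\bigl(\tfrac12(X_1-X_2)\ge-2\theta c\bigr)=\PP\bigl(\tfrac1{4\theta}(X_1-X_2)\le c\bigr),
\]
using that $X_1-X_2$ is symmetric; this is \eqref{eq:flat-bt}.

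\textbf{Main obstacle.} The monotonicity reduction (Steps 1--2) is soft, and given the cited limit theorems the remaining work is short; the genuine difficulty is concentrated in two inputs. First, for the flat case one needs the uniform-in-$t$ Brownian-scale control of the spatial increments used in Step 3 — this is the only place a truly KPZ-specific estimate is required, and is where I would expect to spend the most effort (or to import a known fluctuation bound for the KPZ equation). Second, the tilted case requires \eqref{eq:shockreferenceframe-zerodiff}; the natural route is to rerun the proof of \cref{thm:zeroval-stationarity}, using that $\hat\nu_\theta$ differs from $\nu_\theta$ only by a local multiplicative tilt with an $L^p$ density, so that the $t^{1/2}$-scale central limit theorem — which is governed by the global geometry rather than the behavior near the origin — is unaffected; alternatively one could deduce \eqref{eq:shockreferenceframe-bt} directly from the shock-frame stationarity of \cref{thm:nuhat} via a central limit theorem for the additive functional $b_t=\int_0^t\dot b_s\,\dif s$ (note that $\partial_t(h_+-h_-)$ carries no noise term, so $\dot b_s$ is a stationary function of the shock-frame environment), and then read off \eqref{eq:shockreferenceframe-zerodiff} from the reduction.
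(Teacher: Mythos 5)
Your reduction in Step 2 is a valid, and in fact slightly more economical, alternative to the paper's \cref{lem:h_to_b}: by passing to the event identity $\{b_t\le a\}=\{(h_+-h_-)(t,a)\ge 0\}$ you only need control of the increment at the single point $x=ct^{1/2}$ (resp.\ $ct^{1/3}$), whereas the paper's lemma asks for the uniform bound on $\sup_x\bigl[|\mathcal{J}(t,x)-\mathcal{J}(t,0)-2\theta x|-\eps|x|\bigr]$. In the stationary and tilted cases your $L^2$ estimate on $g(x)=(f_+-f_-)(x)-2\theta x$ via \cref{eq:fplusfminus} (and the Cauchy--Schwarz transfer to $\hat\nu_\theta$ using that the Radon--Nikodym derivative is Gamma-distributed) is precisely the kind of argument the paper uses; both routes give parts \ref{enu:stationaryic} and \ref{enu:tiltedic} cleanly.

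There are, however, two real gaps. First, your strict-monotonicity argument for the flat case (Step 1) does not survive scrutiny: the manipulation ``$u=\e^{h_+-h_-}-1$ solves $\partial_t u=\tfrac12\partial_x^2 u+(\partial_x h_-)\partial_x u$'' is a formal Itô computation on the ratio $\phi_+/\phi_-$ of two SHE solutions driven by the same spacetime white noise, and the quadratic-variation corrections involve $\dif[\phi_\pm]$, which are divergent (formally $\delta(0)\dif t$) for spacetime white noise; the Sturmian/zero-crossing principle for quasilinear parabolic PDE cannot be invoked at this level of regularity. The paper instead proves the preservation of $\mathcal X(\theta)$ directly from the monotonicity of the propagator $z_{12}(y_1,y_2)>z_{12}(y_2,y_1)$ for $y_1<y_2$, which is a genuine property of $Z$ rather than an Itô manipulation; this is \cref{prop:Xpreserved}, and it applies to the flat initial data since $h_+(0,\cdot)-h_-(0,\cdot)=2\theta x$ is strictly increasing.

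Second, in Step 3 you defer the key estimate $\EE\bigl[(h_\pm(t,x)-h_\pm(t,0)\mp\theta x)^2\bigr]=O(|x|)$ uniformly in $t$, saying you would ``import a known fluctuation bound.'' This is exactly where the paper spends its effort: it proves \cref{lem:assumption4key} (via a chaining argument, \cref{lem:KPZmax}, built from the one-point tail bound \cref{lem:onept} and the modulus-of-continuity estimate \cref{lem:CGH_sup} of Corwin--Ghosal--Hammond) to obtain the stronger uniform-in-$x$ control required by \cref{lem:h_to_b}. Your pointwise-at-$x=ct^{1/3}$ version would in principle require a little less, but it still requires a uniform-in-$t$ Brownian-scale moment bound on spatial increments of the flat-initial-data KPZ solution, which is a nontrivial input that you have not supplied; a proof along your lines would need to carry out essentially the same import of \cite{Corwin-Ghosal-2020,Corwin-Ghosal-Hammond-21} that the paper does.
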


\begin{rem} \label{rem:shear_shock}
    As in \cref{rem:shear_dif}, the shear invariance of the KPZ equation allows us to immediately derive the asymptotics of the shock when started from initial conditions with non-opposite slopes. For $\theta_1 < \theta_2$, if $(h_-,h_+)(0,x) \sim \nu_{\theta_1,\theta_2}$ or $\hat \nu_{\theta_1,\theta_2}$ (defined analogously as in \cref{thm:nuhat}), we have 
    \[
    t^{-1/2}\Bigl(b_t  + \frac{\theta_1 + \theta_2}{2}t\Bigr) \Longrightarrow \mathcal N(0, (\theta_2 - \theta_1)^{-1}).
    \]
    For $h_-(0,x) = \theta_1 x$ and $h_+(0,x) = \theta_2 x$, we have 
    \[
    t^{-1/3}\Bigl(b_t + \frac{\theta_1 + \theta_2}{2}t\Bigr) \Longrightarrow \frac{1}{2(\theta_2 - \theta_1)}(X_1 - X_2).
    \]
    From these expressions we see that $-(\theta_1+\theta_2)/2$ is the asymptotic velocity of $b_t$.
\end{rem}

\begin{rem}
    In the cases where $(h_-,h_+)(0,x) \sim \nu_\theta$ or $\hat\nu_{\theta}$, the proof suggests that the full time-scaling limit of $b_t$ should be a Brownian motion with drift $1/2\theta$. 
\end{rem}

\subsection{Comparison with previous work on ASEP}\label{subsec:ASEP-comparison}

Given a Markov process, it is natural to try to characterize all of
its extremal (time-ergodic) invariant measures. This question has
been studied in depth in the context of the simple exclusion process
first introduced by Spitzer \cite{Spitzer-1970}. Early works by Spitzer
and Liggett provided proofs that i.i.d.~Bernoulli measures are the
only extremal stationary measures for the simple exclusion process
in the case when the transition rates are symmetric in space \cite{Liggett-1973,Liggett1974a,Spitzer1974},
and in the case when the Markov chain is positive recurrent and reversible
\cite{Liggett1974b}. The symmetries assumed in those settings substantially
simplified the problem. Another case that is particularly relevant
to the present work is that of the asymmetric simple exclusion process
(ASEP) on $\ZZ$, where Liggett showed in \cite{Liggett1976}
that the only extremal stationary measures are the i.i.d.~Bernoulli
measures and a family of measures that are supported on configurations
with only finitely many holes on the line (known as blocking measures). The ASEP case is particularly
relevant because the model is known to converge to the KPZ equation under
the weak asymmetry scaling \cite{Bertini-Giacomin-1997} (see also \cite{Parekh-2023}). Under this
scaling limit, one centers around a fixed characteristic direction,
and the height functions of the i.i.d.~Bernoulli measures converge
to Brownian motion with drift, while the height functions for the
other invariant measures explode.

The methods of proof in the present paper are quite different from
those for ASEP. Indeed, the work of \cite{Liggett1976} makes heavy
use of local and discrete arguments. However, there are similarities in the broad approach, in
the sense that we use couplings of invariant measures that are jointly
invariant for the process. In the particle systems context, the natural joint
evolution is known as the basic coupling \cite{Liggett1974b,Liggett1975,Spitzer1974}.
The proof in \cite{Liggett1976} heuristically proceeds by showing
that, when comparing any two invariant measures $\kappa_{1}$ and
$\kappa_{2}$, they can be coupled together with a sample configuration
$(\eta,\zeta)\in\{0,1\}^{\ZZ}$ in such a way that $x\mapsto\eta(x)-\zeta(x)$
changes sign at most once. Comparison to the known
invariant measures allows the characterization to go through. In a somewhat similar fashion, our \cref{thm:no-v-shaped} relies on \cref{eq:zeroval-stationary} for the jointly stationary initial condition.

There are also analogies between our \cref{thm:btflucts,thm:zeroval-stationarity,thm:zeroval-flat} and
previous work on ASEP. The shock location $b_{t}$
is analogous to the location of a second-class particle in ASEP; this connection
was first shown at the level of hydrodynamic limits in \cite{Ferrari1992}.
Later, Ferrari and Fontes showed in \cite{Ferrari-Fontes-1994b} that
the trajectory of the second-class particle in a shock-like configuration
converges, after a diffusive scaling, to Brownian motion. This is
related to our result \cref{eq:stationary-bt}. We note that
it is not an exact analogue, since our initial shock profile is a
transformation of jointly invariant measures with different drifts,
so the configurations to the left and right of the origin are not
independent. Our proof is also quite different: we use explicit calculations
from the description of the jointly invariant measures for the KPZ
equation given in \cite{GRASS-23}, while \cite{Ferrari-Fontes-1994b}
uses combinatorial calculations that are accessible only in the discrete
setting. Many of these combinatorial calculations come from the earlier
work \cite{Ferrari-Fontes-1994}.

In the case of flat initial data, an analogue of \cref{eq:flat-bt}
was proved in \cite{Ferrari-Ghosal-Nejjar-2019}. The analogy
is again not perfect, since that work considered a zero-temperature/inviscid
setting (TASEP), but in this case the proof techniques are more similar.
Those authors started from the distributional equality between the
trajectory of the second-class particle in TASEP and the competition
interface in exponential last-passage percolation \cite{ferr-pime-05}.
They then used the known convergence of the one-point distribution
of TASEP from flat initial condition to the Tracy--Widom GOE distribution
proved in \cite{Ferrari-Occelli-2018}, although with decorrelation
results \cite{Corwin-Ferrari-Peche-2012,Ferrari2008,Ferrari-Nejjar-2014}
to get independence of the GOE random variables. In our setting, we
use convergence of the KPZ equation to the KPZ fixed point \cite{Wu-23}
to get the GOE convergence, and then use localization estimates from
\cite{Das-Zhu-22b} to obtain the independence. An additional important
ingredient is an identity for the weight function of the continuum
directed random polymer in the half space in terms of the stochastic
heat equation with Dirichlet boundary conditions (\cref{lem:hlSHE}),
which is intuitive but which we could not find in the literature.

\subsection{Organization of the paper}

In \cref{sec:preliminaries}, we introduce some notation and
function spaces, and then summarize results from the literature that
are important to our techniques. In \cref{sec:zerovalflucts},
we consider the fluctuations of $h_{+}(t,0)-h_{-}(t,0)$, proving
\cref{thm:zeroval-stationarity,thm:zeroval-flat}
as well as \cref{eq:shockreferenceframe-zerodiff} of \cref{thm:btflucts}(\ref{enu:tiltedic}).
In \cref{sec:V-shaped}, we study the behavior of V-shaped solutions,
proving \cref{thm:no-v-shaped,thm:whatcanyouconvergeto}.
Finally, in \cref{sec:btflucts}, we study the fluctuations of
$b_{t}$, completing the proof of \cref{thm:btflucts}.

\subsection{Funding}
E.S. was partially supported by the Fernholz Foundation. Part of this
work was completed during the workshop ``Universality and Integrability
in KPZ'' at Columbia University, March 15--19, 2024, which was supported
by NSF grants DMS-2400990 and DMS-1664650.

\subsection{Conflict of interest statement}
The authors have no conflicts of interest to declare.

\subsection{Data availability statement} There is no data associated to this manuscript.

\subsection{Acknowledgements}
E.S. wishes to thank Timo Seppäläinen for several helpful discussions, M\'arton Bal\'azs for helpful discussions and pointers to the literature,
and Ivan Corwin for pointers to the literature, several general discussions,
and helpful discussions related to the proof of \cref{lem:KPZmax}.
A.D. would like to thank Yu Gu for encouragement and helpful discussions.
The authors also thank Sayan Das for helpful comments on an early version of the manuscript, and Xuan Wu for helpful discussions about the paper \cite{Wu-23}. 
Finally, the authors are very grateful to two anonymous referees for carefully reading
the manuscript and pointing out several important issues, which have now been corrected.

\section{Preliminaries}\label{sec:preliminaries}

In this section we review known results on the solution theory of
the KPZ equation on the whole line, and in particular introduce some
notation we will use. We use the framework of \cite{Alb-Janj-Rass-Sepp-22},
and we will largely follow their notation. In addition, we will introduce
some function spaces related to V-shaped solutions adapted from \cite{Dunlap-Ryzhik-2020}
(which works in terms of the derivative process and so uses somewhat
different, although largely equivalent, notations).

\subsection{Notational conventions}
\begin{enumerate}
\item We write $G(t,x)=\frac{1}{\sqrt{2\pi t}}\e^{-x^{2}/(2t)}$ for the
standard heat kernel.
\item For a topological space $\mathcal{Z}$, we write
$\mathcal{C}_{\mathrm{b}}(\mathcal{Z})$ for the set of bounded continuous
functions on $\mathcal{Z}$.
\item We denote equality in distribution by $\overset{\mathrm{law}}{=}$.
\item For a function $f\colon\RR\to\RR$, we define the spatial translation
  \begin{equation}
    \tau_{x}f(y)=f(x+y).\label{eq:taudef}
\end{equation}
We also define the horizontal centering
\begin{equation}
\pi_{x}f(y)=f(x+y)-f(x).\label{eq:pidef}
\end{equation}
\item For a $k$-tuple of functions $\mathbf{f}=(f_{1},\ldots,f_{k})$,
we define $\tau_{x}\mathbf{f}$ and $\pi_{x}\mathbf{f}$ to be the coordinatewise applications of $\tau_{x}$ and $\pi_{x}$, respectively.
\end{enumerate}

\subsection{Function spaces}

Here we define the spaces in which we solve the KPZ equation, following
\cite[(1.4), (1.6), and (1.11)]{Alb-Janj-Rass-Sepp-22}. We define
\begin{equation}
\mathcal{M}_{\mathrm{HE}}\coloneqq\left\{ \mu\text{ a positive Borel measure on }\RR\st\int_{\RR}\e^{-ax^{2}}\mu(\dif x)<\infty\text{ for all }a>0\right\} ,\label{eq:MHEdef}
\end{equation}
\begin{equation}
\mathcal{C}_{\mathrm{HE}}\coloneqq\left\{ f\in\mathcal{C}(\RR;(0,\infty))\st\int_{\RR}\e^{-ax^{2}}f(x)\,\dif x<\infty\text{ for all }a>0\right\} ,\label{eq:CHEdef}
\end{equation}
and
\begin{equation}
\mathcal{C}_{\mathrm{KPZ}}\coloneqq\{\log\circ f\st f\in\mathcal{C}_{\mathrm{HE}}\}=\left\{ f\in\mathcal{C}(\RR)\st\int_{\RR}\e^{f(x)-ax^{2}}\,\dif x<\infty\text{ for all }a>0\right\} .\label{eq:CKPZdef}
\end{equation}
We use the topology on $\mathcal{C}_{\mathrm{HE}}$ induced by uniform
convergence on compact sets as well as convergence of integrals of
the form $\int_{\RR}\e^{-ax^{2}}f(x)\,\dif x$. The topology
on $\mathcal{C}_{\mathrm{KPZ}}$ is such that the map $(\log\circ)\colon\mathcal{C}_{\mathrm{HE}}\to\mathcal{C}_{\mathrm{KPZ}}$
is a homeomorphism. It was shown in \cite{Alb-Janj-Rass-Sepp-22}
that $\mathcal{C}_{\mathrm{KPZ}}$ is a Polish space.

As we have noted in the introduction, there are no invariant probability
measures for the KPZ dynamics on $\mathcal{C}_{\mathrm{KPZ}}$, since
the fluctuations of $h(t,0)$ will grow as $t\to\infty$. To consider
invariant measures, we define the space
\begin{equation}
\mathcal{C}_{\mathrm{KPZ};0}\coloneqq\{f\in\mathcal{C}_{\mathrm{KPZ}}\st f(0)=0\}.\label{eq:CKPZ0def}
\end{equation}
Recalling the definition \cref{eq:pidef}, we note that, for
each $x\in\RR$, the map $\pi_{x}$ maps $\mathcal{C}_{\mathrm{KPZ}}$
to $\mathcal{C}_{\mathrm{KPZ};0}$.

We have also noted in the introduction that, in studying V-shaped
solutions to the KPZ equation, it is helpful to construct them from
pairs of solutions. We now introduce some useful function spaces
for considering pairs of solutions and V-shaped solutions to the KPZ equation. For $\theta>0$,
we define
\begin{equation}
\mathcal{Y}(\theta)\coloneqq\left\{ (f_{-},f_{+})\in\mathcal{C}_{\mathrm{KPZ}}^{2}\st\lim_{|x|\to\infty}\frac{f_{\pm}(x)}{x}=\pm\theta\right\} \label{eq:Ythetadef}
\end{equation}
and
\begin{equation}
\mathcal{Y}_{0}(\theta)\coloneqq\mathcal{Y}(\theta)\cap\mathcal{C}_{\mathrm{KPZ};0}^{2}.\label{eq:Y0thetadef}
\end{equation}
We further define
\begin{equation}
\mathcal{X}(\theta)\coloneqq\left\{ (f_{-},f_{+})\in\mathcal{Y}(\theta)\st f_{+}-f_{-}\text{ is strictly increasing}\right\} \label{eq:Xthetadef}
\end{equation}
and
\begin{equation}
\mathcal{X}_{0}(\theta)\coloneqq\mathcal{X}(\theta)\cap\mathcal{C}_{\mathrm{KPZ};0}^{2}.\label{eq:X0thetadef}
\end{equation}
Finally, we define a space of V-shaped functions with asymptotic slopes
$\pm\theta$:
\begin{equation}
\mathcal{V}(\theta)\coloneqq\left\{ f\in\mathcal{C}_{\mathrm{KPZ}}\st\lim_{|x|\to\infty}\frac{f(x)}{|x|}=\theta\right\} .\label{eq:Vthetadef}
\end{equation}
As in \cref{eq:Vshapedsolution}, we define the map $V\colon\mathcal{Y}(\theta)\to\mathcal{V}(\theta)$
by
\begin{equation}
V[f_{-},f_{+}](x)\coloneqq\log\frac{\e^{f_{+}(x)}+\e^{f_{-}(x)}}{2}.\label{eq:Vdef}
\end{equation}
It is straightforward to check that the spaces $\mathcal{Y}(\theta)$,
$\mathcal{Y}_{0}(\theta)$, $\mathcal{X}(\theta)$, and $\mathcal{X}_{0}(\theta)$
are all Borel-measurable subsets of the space $\mathcal{C}_{\mathrm{KPZ}}^{2}$,
and that $\mathcal{V}(\theta)$ is a Borel-measurable subset of $\mathcal{C}_{\mathrm{KPZ}}$.
We equip all of these spaces with the subspace topologies induced
by the respective inclusions.

\subsection{The KPZ dynamics}

We let $Z(t,x\viiva s,y)$ denote the fundamental solution to the multiplicative
stochastic heat equation \cref{eq:SHE}. It satisfies
\begin{align*}
\dif_{t}Z(t,x\viiva s,y) & =\frac{1}{2}\Delta_{x}Z(t,x\viiva s,y)\dif t+Z(t,x\viiva s,y)\dif W(t,x), & -\infty<s<t<\infty\text{ and }x,y\in\RR;\\
Z(t,x\viiva t,y) & =\delta(x-y), & t,x,y\in\RR.
\end{align*}
This process was constructed (simultaneously for all $t,x,s,y$ on
a single event of probability $1$) in \cite{Alberts-Khanin-Quastel-2014a};
see also \cite{Alb-Janj-Rass-Sepp-22}. We define the (``physical'')
solution to \cref{eq:KPZ} with initial data $h(s,\cdot)\in\mathcal{C}_{\mathrm{KPZ}}$
at time $s$ by
\[
h(t,x)=\log\int_{\RR}Z(t,x\viiva s,y)\e^{h(s,y)}\,\dif y,\qquad t>s.
\]
Then $h(t,\cdot)\in\mathcal{C}_{\mathrm{KPZ}}$ for all $t>s$ according
to the results of \cite[§2.1]{Alb-Janj-Rass-Sepp-22}.

For our applications, it will be important that certain projections
of the KPZ dynamics are Markov processes whose semigroups satisfy
the Feller property.
\begin{prop}
\label{prop:projected-feller}Let $N\in\NN$ and let $g\colon\mathcal{C}_{\mathrm{KPZ}}^{N}\to\RR^{N}$
be a continuous linear map such that $g[x\mapsto g[\mathbf{f}]]\equiv g[\mathbf{f}]$
for all $\mathbf{f}\in\mathcal{C}_{\mathrm{KPZ}}^{N}$. (Here, $x \mapsto g[\mathbf f]$ denotes the constant function with value $g[\mathbf f]$.) Define $\pi\colon\mathcal{C}_{\mathrm{KPZ}}^{N}\to\mathcal{C}_{\mathrm{KPZ}}^{N}$
by $\pi[\mathbf{f}](x)=\mathbf{f}(x)-g[\mathbf{f}]$.
\begin{enumerate}
\item For any vector $\mathbf{h}=(h_{1},\ldots,h_{N})$ of solutions to
\cref{eq:KPZ}, the process $(\pi[\mathbf{h}(t,\cdot)])_{t\ge0}$
is a Markov process with state space $\mathcal{C}_{\mathrm{KPZ}}^{N}$.
\item For $F\in\mathcal{C}_{\mathrm{b}}(\mathcal{C}_{\mathrm{KPZ}}^{N})$,
$t\ge0$, and $\mathbf{f}\in\mathcal{C}_{\mathrm{KPZ}}^{N}$, let
$P_{t}^{\pi}F(\mathbf{f})=\EE[F[\pi[\mathbf{h}(t,\cdot)]]]$,
where $\mathbf{h}$ is a vector of solutions to \cref{eq:KPZ}
with initial condition $\mathbf{h}(0,x)=\mathbf{f}(x)$. Then the
Markov semigroup $(P_{t}^{\pi})_{t\ge0}$ has the Feller property.
\end{enumerate}
\end{prop}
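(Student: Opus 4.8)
The plan is to establish Proposition~\ref{prop:projected-feller} in two parts, treating the Markov property and the Feller property separately, since the former is essentially a consequence of the flow property of the stochastic heat equation and the latter requires a genuine continuity estimate.

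For part~(1), the Markov property, I would argue as follows. The full vector $(\mathbf h(t,\cdot))_{t\ge 0}$ is a Markov process on $\mathcal C_{\mathrm{KPZ}}^N$ because $Z(t,x\viiva s,y)$ has the semigroup/flow property: solving the stochastic heat equation from time $s$ to time $t$ and then from $t$ to $u$ agrees with solving directly from $s$ to $u$, and the noise increments on disjoint time intervals are independent. This is standard and can be cited from \cite{Alberts-Khanin-Quastel-2014a,Alb-Janj-Rass-Sepp-22}. The key observation making the \emph{projected} process Markov is the identity $h_i(t,x) = h_i(t,0) + \pi_0 h_i(t,x)$ together with the linearity hypothesis: applying the KPZ flow to the shifted initial data $\pi[\mathbf f]$ versus $\mathbf f$ produces solutions differing, coordinatewise, by the \emph{deterministic} constants $g[\mathbf f]$, because $\e^{h_i(s,y)-c} = \e^{-c}\e^{h_i(s,y)}$ and the logarithm turns the global multiplicative constant back into an additive shift. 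More precisely, if $\mathbf h$ and $\tilde{\mathbf h}$ solve \cref{eq:KPZ} with $\tilde{\mathbf h}(0,\cdot) = \pi[\mathbf h(0,\cdot)] = \mathbf h(0,\cdot) - g[\mathbf h(0,\cdot)]$ (using the same noise), then $\tilde h_i(t,x) = h_i(t,x) - g_i[\mathbf h(0,\cdot)]$ for all $t,x$. Hence $\pi[\tilde{\mathbf h}(t,\cdot)] = \pi[\mathbf h(t,\cdot)]$ using the hypothesis $g[x\mapsto g[\mathbf f]] = g[\mathbf f]$ (so that $g[\mathbf h(t,\cdot) - g[\mathbf h(0,\cdot)]] = g[\mathbf h(t,\cdot)] - g[\mathbf h(0,\cdot)]$, and the constant cancels upon subtraction). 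This shows that the law of $\pi[\mathbf h(t,\cdot)]$ given the past depends on the past only through $\pi[\mathbf h(s,\cdot)]$, which is the Markov property; one then packages this into the transition kernel $P_t^\pi$ as defined.

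For part~(2), the Feller property, I need to show that for $F\in\mathcal C_{\mathrm b}(\mathcal C_{\mathrm{KPZ}}^N)$ and fixed $t\ge 0$, the map $\mathbf f\mapsto P_t^\pi F(\mathbf f) = \EE[F[\pi[\mathbf h(t,\cdot)]]]$ is continuous on $\mathcal C_{\mathrm{KPZ}}^N$. By dominated convergence (the integrand is bounded by $\|F\|_\infty$), it suffices to show that if $\mathbf f_n\to\mathbf f$ in $\mathcal C_{\mathrm{KPZ}}^N$ then $\pi[\mathbf h_n(t,\cdot)]\to\pi[\mathbf h(t,\cdot)]$ in probability (indeed almost surely, if we realize all the solutions on the same noise), where $\mathbf h_n, \mathbf h$ are the solutions with initial data $\mathbf f_n,\mathbf f$. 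This reduces to the continuous dependence of the KPZ solution map on its initial condition: if $f_n\to f$ in $\mathcal C_{\mathrm{KPZ}}$, then $h_n(t,\cdot)\to h(t,\cdot)$ in $\mathcal C_{\mathrm{KPZ}}$, which in the Cole--Hopf picture is the statement that $\int_\RR Z(t,x\viiva 0,y)\e^{f_n(y)}\,\dif y \to \int_\RR Z(t,x\viiva 0,y)\e^{f(y)}\,\dif y$ in $\mathcal C_{\mathrm{HE}}$. The topology on $\mathcal C_{\mathrm{HE}}$ is exactly uniform-on-compacts convergence plus convergence of the Gaussian-weighted integrals $\int \e^{-ax^2}(\cdot)\,\dif x$; convergence in this topology is precisely what is needed, and this continuity is established in \cite[\S2.1]{Alb-Janj-Rass-Sepp-22} (the solution map is continuous from $\mathcal M_{\mathrm{HE}}$, hence from $\mathcal C_{\mathrm{HE}}$, to $\mathcal C_{\mathrm{HE}}$, almost surely, using the Gaussian upper and lower bounds on $Z$). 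Finally, the projection $\pi$ is continuous from $\mathcal C_{\mathrm{KPZ}}^N$ to itself: $g$ is linear and bounded in the relevant sense (it factors through evaluation at finitely many points by the structure of such projections, or more directly: $g$ continuous is part of what must be assumed implicitly for $\pi$ to be well-defined on the Polish space, and in all our applications $g[\mathbf f]$ is a finite linear combination of point evaluations $f_i(0)$, which are continuous on $\mathcal C_{\mathrm{KPZ}}$). Composing, $\mathbf f\mapsto \pi[\mathbf h(t,\cdot)]$ is a.s.\ continuous, giving the Feller property.

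The main obstacle is the continuous dependence of the stochastic-heat-equation solution map on the initial condition in the correct topologies---that is, showing uniform-on-compacts plus weighted-integral convergence of the initial data is propagated to the solution at time $t$. This is not quite trivial because $\mathcal C_{\mathrm{HE}}$ allows functions with rather wild growth/decay, and one must control the tails of $\int Z(t,x\viiva 0,y)\e^{f_n(y)}\,\dif y$ uniformly in $n$; the relevant bounds come from the Gaussian estimates $c_1 G(t-s, x-y)\e^{-C} \le Z(t,x\viiva s,y) \le c_2 G(t-s,x-y)\e^{C}$ (with random but a.s.-finite constants), combined with the membership $f_n\in\mathcal C_{\mathrm{HE}}$ giving uniform Gaussian-weighted integrability. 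Fortunately this is precisely the content of the solution theory developed in \cite{Alb-Janj-Rass-Sepp-22}, so I would invoke it rather than reprove it; the remaining work is the bookkeeping in the previous two paragraphs, which is routine once that input is in hand.
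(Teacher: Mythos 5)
Your proof takes essentially the same route as the paper's: part~(1) exploits the linearity of $g$ and the multiplicative structure of the Cole--Hopf transform to show that $\pi[\mathbf h(t,\cdot)]$ is a deterministic function of $\pi[\mathbf h(s,\cdot)]$ and the noise on $(s,t]$, and part~(2) reduces the Feller property to that of the unprojected semigroup $(P_t^{\id})$, which the paper cites from \cite[Remark~2.12]{Alb-Janj-Rass-Sepp-22}. The only added value in your write-up is that you make explicit the (tacit) requirement that $g$, hence $\pi$, be continuous on $\mathcal C_{\mathrm{KPZ}}^N$, which indeed holds in all the paper's applications since $g$ is built from point evaluations.
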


\begin{proof}
We fix $s<t$ and note that
\begin{align*}
\mathbf{h}(t,x) & =\log\int_{\RR}Z(t,x\viiva s,y)\exp(\mathbf{h}(s,y))\,\dif y\\
 & =\log\int_{\RR}Z(t,x\viiva s,y)\exp(\pi[\mathbf{h}(s,\cdot)](y)+g[\mathbf{h}(s,\cdot)])\,\dif y\\
 & =\log\int_{\RR}Z(t,x\viiva s,y)\exp(\pi[\mathbf{h}(s,\cdot)](y))\,\dif y+g[\mathbf{h}(s,\cdot)],
\end{align*}
where $\log$ and $\exp$ act on vectors componentwise. Therefore,
using the assumptions on $g$, we have
\begin{align*}
\pi[\mathbf{h}(t,\cdot)](z) & =\pi\left[x\mapsto\log\int_{\RR}Z(t,x\viiva s,y)\exp(\pi[\mathbf{h}(s,\cdot)](y))\,\dif y+g[\mathbf{h}(s,\cdot)]\right](z)\\
 & =\pi\left[x\mapsto\log\int_{\RR}Z(t,x\viiva s,y)\exp(\pi[\mathbf{h}(s,\cdot)](y))\,\dif y\right](z).
\end{align*}
From this we see that $\pi[\mathbf{h}(t,\cdot)]$ depends only on
$\pi[\mathbf{h}(s,\cdot)]$ and the noise between $s$ and $t$, and
conclude that $(\pi[\mathbf{h}(t,\cdot)])_{t}$ is a Markov process.
The fact that $(P_{t}^{\pi})_{t\ge0}$ has the Feller property is
then an immediate consequence of the same statement for $(P_{t}^{\id})_{t\ge0}$,
which was shown in \cite[Remark 2.12]{Alb-Janj-Rass-Sepp-22}.
\end{proof}
Recall the definition \cref{eq:Vdef} of $V$.
\begin{prop}
\label{prop:Vsolves}If $h_{-}$ and $h_{+}$ are solutions to \cref{eq:KPZ},
and we define $h_{\mathsf{V}}(t,x)\coloneqq V[(h_{-},h_{+})(t,\cdot)](x)$,
then $h_{\mathsf{V}}$ is also a solution to \cref{eq:KPZ}.
\end{prop}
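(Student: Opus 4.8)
The plan is to reduce everything to the linearity of the stochastic heat equation \cref{eq:SHE}, using the propagator representation that defines physical solutions. Write $\phi_\pm(t,x)=\e^{h_\pm(t,x)}$; the hypothesis that $h_\pm$ solves \cref{eq:KPZ} (with initial data, say, at time $0$) is exactly the statement that $h_\pm(t,\cdot)\in\mathcal{C}_{\mathrm{KPZ}}$ for every $t\ge 0$ and that $\phi_\pm(t,x)=\int_\RR Z(t,x\viiva 0,y)\phi_\pm(0,y)\,\dif y$ for $t>0$. By the Chapman--Kolmogorov property of the fundamental solution $Z$ (part of its construction in \cite{Alberts-Khanin-Quastel-2014a}), together with Fubini, this propagates to the identity $\phi_\pm(t,x)=\int_\RR Z(t,x\viiva s,y)\phi_\pm(s,y)\,\dif y$ for \emph{all} $0\le s<t$, which is the form I will use.

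The computation is then immediate. By the definition \cref{eq:Vdef} of $V$ we have $\phi_{\mathsf V}(t,x):=\e^{h_{\mathsf V}(t,x)}=\tfrac12\bigl(\phi_+(t,x)+\phi_-(t,x)\bigr)$, so taking the $\tfrac12$-weighted sum of the two propagator identities and using linearity of the integral (legitimate since each integrand is integrable by hypothesis),
\begin{align*}
\phi_{\mathsf V}(t,x)
&=\tfrac12\int_\RR Z(t,x\viiva s,y)\bigl(\phi_+(s,y)+\phi_-(s,y)\bigr)\,\dif y\\
&=\int_\RR Z(t,x\viiva s,y)\,\phi_{\mathsf V}(s,y)\,\dif y
\end{align*}
for all $0\le s<t$. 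Taking logarithms yields $h_{\mathsf V}(t,x)=\log\int_\RR Z(t,x\viiva s,y)\e^{h_{\mathsf V}(s,y)}\,\dif y$, which is precisely the defining relation for a physical solution of \cref{eq:KPZ}.

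The only point left to verify before invoking that definition is that $h_{\mathsf V}(s,\cdot)$ lies in $\mathcal{C}_{\mathrm{KPZ}}$ for each $s\ge0$, equivalently that $\phi_{\mathsf V}(s,\cdot)\in\mathcal{C}_{\mathrm{HE}}$; but this is clear from \cref{eq:CHEdef}, since $\mathcal{C}_{\mathrm{HE}}$ is a convex cone (closed under addition and multiplication by positive scalars) and $\phi_\pm(s,\cdot)\in\mathcal{C}_{\mathrm{HE}}$. There is essentially no analytic obstacle here: the entire content is that the Cole--Hopf transform carries the linear structure of \cref{eq:SHE} through the exponential change of variables, and the only things requiring (routine) care are that the propagator representation holds between arbitrary pairs of times and that membership in $\mathcal{C}_{\mathrm{HE}}$ is preserved under the positive linear combination $(\phi_-,\phi_+)\mapsto\tfrac12(\phi_++\phi_-)$.
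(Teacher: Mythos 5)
Your proof is correct and follows the same idea as the paper: observe that $\e^{h_{\mathsf V}}=\tfrac12(\e^{h_-}+\e^{h_+})$ and invoke the linearity of the stochastic heat equation. You simply spell out in more detail the propagator identity and the routine check that $\mathcal{C}_{\mathrm{HE}}$ is closed under positive linear combinations, which the paper leaves implicit.
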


\begin{proof}
We note that $\e^{h_{\mathsf{V}}(t,x)}=\frac{1}{2}(\e^{h_{-}(t,x)}+\e^{h_{+}(t,x)})$,
and the conclusion follows from the linearity of the multiplicative
stochastic heat equation.
\end{proof}
The following proposition, which plays a role similar to that of \cite[Lemma~2.2]{Dunlap-Ryzhik-2020}, shows that the
space $\mathcal{X}(\theta)$ is preserved by the KPZ dynamics.
\begin{prop}
\label{prop:Xpreserved}Let $\theta>0$ and let $h_{-}$ and $h_{+}$
be solutions to \cref{eq:KPZ} with initial data $(h_{-},h_{+})(s,\cdot)\in\mathcal{X}(\theta)$.
Then we have $(h_{-},h_{+})(t,\cdot)\in\mathcal{X}(\theta)$ for all
$t>s$.
\end{prop}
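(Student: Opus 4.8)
The plan is to establish the two defining properties of $\mathcal{X}(\theta)$ separately: first that $(h_-,h_+)(t,\cdot)\in\mathcal{Y}(\theta)$, i.e. the asymptotic slopes $\pm\theta$ are preserved, and second that $h_+(t,\cdot)-h_-(t,\cdot)$ is strictly increasing. For the asymptotic slopes, I would invoke the known basin-of-attraction / slope-preservation results for the single-component KPZ dynamics: applying the Cole--Hopf formula $h_\pm(t,x)=\log\int_\RR Z(t,x\viiva s,y)\e^{h_\pm(s,y)}\,\dif y$, the condition $\lim_{|x|\to\infty}h_\pm(s,x)/x=\pm\theta$ is propagated to time $t>s$ because the heat kernel $Z$ has Gaussian-type tails and the initial data lies in $\mathcal{C}_{\mathrm{KPZ}}$; this is exactly the type of statement established in \cite{Janj-Rass-Sepp-22,Alb-Janj-Rass-Sepp-22} (it is the well-definedness of the KPZ dynamics on $\mathcal{Y}(\theta)$, and in particular the marginals of $\nu_\theta$ being $\mu_{\pm\theta}$-invariant already presupposes it). So the slope part is essentially a citation.

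The substantive part is strict monotonicity of $h_+-h_-$. Here the key observation is the one already exploited in \cref{eq:hvintermsofstationaryquantities}: define $u(t,x)=\e^{h_+(t,x)}$ and $v(t,x)=\e^{h_-(t,x)}$, both of which solve the \emph{same} linear multiplicative stochastic heat equation \cref{eq:SHE} driven by the same noise. Then
\[
h_+(t,x)-h_-(t,x)=\log\frac{u(t,x)}{v(t,x)},
\]
so it suffices to show $x\mapsto u(t,x)/v(t,x)$ is strictly increasing, equivalently that $u(t,x)v(t,y)-u(t,y)v(t,x)>0$ for all $x>y$. Using the propagator representation $u(t,x)=\int Z(t,x\viiva s,z)u(s,z)\,\dif z$ and likewise for $v$, we compute
\[
u(t,x)v(t,y)-u(t,y)v(t,x)=\iint Z(t,x\viiva s,z)Z(t,y\viiva s,w)\bigl[u(s,z)v(s,w)-u(s,w)v(s,z)\bigr]\,\dif z\,\dif w.
\]
Symmetrizing the integrand in $(z,w)$ against the kernel $Z(t,x\viiva s,z)Z(t,y\viiva s,w)-Z(t,x\viiva s,w)Z(t,y\viiva s,z)$, the problem reduces to two positivity facts: (i) the FKG/total-positivity statement that for $x>y$ and $z>w$ one has $Z(t,x\viiva s,z)Z(t,y\viiva s,w)\ge Z(t,x\viiva s,w)Z(t,y\viiva s,z)$ a.s. — this is the stochastic-heat-equation analogue of the fact that the deterministic heat kernel is totally positive of order $2$, and should follow either from a Gronwall/Picard-iteration comparison starting from the Gaussian kernel (which is manifestly $\mathrm{TP}_2$) or from a path-integral / chaos-expansion argument; and (ii) the hypothesis that at time $s$, $u(s,\cdot)/v(s,\cdot)$ is strictly increasing, i.e. $u(s,z)v(s,w)-u(s,w)v(s,z)\ge 0$ for $z>w$ with strict inequality on a set of positive measure. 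Combining (i) and (ii), the double integral is a nonnegative integrand that is strictly positive on a positive-measure set, hence strictly positive.

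The main obstacle I anticipate is proving the total-positivity inequality (i) for the random propagator $Z$ — while it is "morally obvious" and is the continuum limit of the corresponding statement for discrete polymer partition functions, writing a clean proof requires some care. The cleanest route is probably to reduce to the chaos expansion $Z(t,x\viiva s,z)=\sum_{k\ge0}\int\cdots\int G(\cdots)\cdots$ and observe that each term, being a composition of Gaussian kernels, is $\mathrm{TP}_2$, and that $\mathrm{TP}_2$ is preserved under the relevant convolutions and under the (positive-coefficient) summation; alternatively one can use the comparison $Z(t,x\viiva s,z)=G(t-s,x-z)\,\mathcal{M}(t,x\viiva s,z)$ with $\mathcal{M}$ a positive martingale and argue via the Feynman--Kac / directed-polymer representation that the required ratio inequality is inherited from the deterministic heat kernel. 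A second, softer route that avoids (i) entirely: since $\mathcal{X}(\theta)$ is preserved by the \emph{deterministic} heat semigroup (classical) and by multiplication of both components by the same positive function (which cancels in the ratio), one can use a Wong--Zakai / mollified-noise approximation $W^\eps\to W$, for which $h^\eps_\pm$ solve genuine PDEs and $\partial_x(h^\eps_+-h^\eps_-)$ satisfies a linear parabolic equation whose zeroth-order coefficient keeps the sign of the initial sign by the maximum principle, and then pass to the limit using the continuity of the solution map and the fact that $\mathcal{X}(\theta)$ (or at least non-strict monotonicity of the difference) is closed; strictness is then recovered from the strong maximum principle at the approximate level together with the observation from \cref{eq:fplusfminus}-type formulas that the difference cannot be locally constant. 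I would present whichever of these is shortest given the tools already imported from \cite{Alb-Janj-Rass-Sepp-22}.
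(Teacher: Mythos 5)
Your proposal is essentially the same as the paper's proof: cite the slope-preservation result from \cite{Alb-Janj-Rass-Sepp-22} for the $\mathcal{Y}(\theta)$ part, then write the increment of $h_+-h_-$ as a double integral against the product kernel $Z(t,x_i\viiva s,y_1)Z(t,x_j\viiva s,y_2)$, symmetrize over $y_1<y_2$, and conclude from the $\mathrm{TP}_2$ property of $Z$ together with the strict monotonicity of $(h_+-h_-)(s,\cdot)$. The only piece you were unsure how to handle --- the total positivity of the random propagator, inequality (i) --- you do not need to reprove: it is exactly \cite[Theorem 2.17]{Alb-Janj-Rass-Sepp-22}, which is what the paper cites at that step, so your speculative routes (chaos expansion, Wong--Zakai) are unnecessary.
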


\begin{proof}
Fix $t>s$. The fact that $\lim\limits_{|x|\to\pm\infty}\frac{h_{\pm}(t,x)}{x}=\pm\theta$
is proved as \cite[Proposition 2.13]{Alb-Janj-Rass-Sepp-22}, so it
remains to prove that $(h_{+}-h_{-})(t,\cdot)$ is strictly increasing.
Let $x_{1}<x_{2}$. Define
\begin{equation}
z_{ij}(y_{1},y_{2})\coloneqq Z(t,x_{i}\viiva s,y_{1})Z(t,x_{j}\viiva s,y_{2})\label{eq:zdef}
\end{equation}
and
\[
k(y_{1},y_{2})\coloneqq\exp\{h_{-}(s,y_{1})+h_{+}(s,y_{2})\},
\]
so we can write
\begin{align}
h_{+}(t,x_{2}) & -h_{-}(t,x_{2})-(h_{+}(t,x_{1})-h_{-}(t,x_{1}))=\log\frac{\iint_{\RR^{2}}z_{12}(y_{1},y_{2})k(y_{1},y_{2})\,\dif y_{1}\,\dif y_{2}}{\iint_{\RR^{2}}z_{21}(y_{1},y_{2})k(y_{1},y_{2})\,\dif y_{1}\,\dif y_{2}}\nonumber \\
 & =\log\frac{\iint_{y_{1}<y_{2}}[z_{12}(y_{1},y_{2})k(y_{1},y_{2})+z_{12}(y_{2},y_{1})k(y_{2},y_{1})]\,\dif y_{1}\,\dif y_{2}}{\iint_{y_{1}<y_{2}}[z_{21}(y_{1},y_{2})k(y_{1},y_{2})+z_{21}(y_{2},y_{1})k(y_{2},y_{1})]\,\dif y_{1}\,\dif y_{2}}\nonumber \\
 & =\log\frac{\iint_{y_{1}<y_{2}}[z_{12}(y_{1},y_{2})k(y_{1},y_{2})+z_{12}(y_{2},y_{1})k(y_{2},y_{1})]\,\dif y_{1}\,\dif y_{2}}{\iint_{y_{1}<y_{2}}[z_{12}(y_{2},y_{1})k(y_{1},y_{2})+z_{12}(y_{1},y_{2})k(y_{2},y_{1})\,\dif y_{1}\,\dif y_{2}},\label{eq:hmonotone-prep}
\end{align}
where in the last identity we used that 
\[
z_{21}(w_{1},w_{2})=z_{12}(w_{2},w_{1})
\]
for any $w_{1},w_{2}\in\RR$ by the definition \cref{eq:zdef}.
Now we have, whenever $y_{1}<y_{2}$, that
\[
z_{12}(y_{1},y_{2})>z_{12}(y_{2},y_{1})
\]
by \cite[Theorem 2.17]{Alb-Janj-Rass-Sepp-22} and 
\[
k(y_{1},y_{2})>k(y_{2},y_{1})
\]
by the assumption that $(h_{+}-h_{-})(s,\cdot)$ is strictly increasing.
This implies that
\begin{align*}
z_{12} & (y_{1},y_{2})k(y_{1},y_{2})+z_{12}(y_{2},y_{1})k(y_{2},y_{1})-[z_{12}(y_{2},y_{1})k(y_{1},y_{2})+z_{12}(y_{1},y_{2})k(y_{2},y_{1})]\\
 & =[z_{12}(y_{1},y_{2})-z_{12}(y_{2},y_{1})]\cdot[k(y_{1},y_{2})-k(y_{2},y_{1})]>0
\end{align*}
whenever $y_{1}<y_{2}$, and so the right side of \cref{eq:hmonotone-prep}
is positive, which is what we wanted to show.
\end{proof}
In the following sections, we will also make frequent use of the scaling
relations of the KPZ equation, or equivalently of the stochastic heat
equation. We cite a result from \cite{Alb-Janj-Rass-Sepp-22}, which
gives a full distributional equality for the four-parameter process
$Z$. At the level of the KPZ equation, these have been previously
well-known. We only state the invariances we need for our purposes.
\begin{prop}[{\cite[Lemma 3.1]{Alb-Janj-Rass-Sepp-22}}] \label{prop:invariances} 
The process $Z(t,x\viiva s,y)$ satisfies the following scaling invariances
as a process in the space $\mathcal{C}(\RR_{\uparrow}^{4};\RR)$,
where $\RR_{\uparrow}^{4}\coloneqq\{(t,x,s,y)\in\RR^{4}\st s<t\}$.
\begin{description}[leftmargin=0pt]
\item [{(Shift)}] For $u,z\in\RR$, we have
\begin{equation}
\left\{ Z(t+u,x+z\viiva s+u,y+z)\right\} _{(t,x,s,y)\in\RR_{\uparrow}^{4}}\overset{\mathrm{law}}{=}\left\{ Z(t,x\viiva s,y)\right\} _{(t,x,s,y)\in\RR_{\uparrow}^{4}}.\label{eq:Z_shift}
\end{equation}
\item [{(Reflection)}] We have
\begin{equation}
\left\{ Z(t,x\viiva s,y)\right\} _{(t,x,s,y)\in\RR_{\uparrow}^{4}}\overset{\mathrm{law}}{=}\left\{ Z(t,-x\viiva s,-y)\right\} _{(t,x,s,y)\in\RR_{\uparrow}^{4}}\overset{\mathrm{law}}{=}\left\{ Z(-s,y\viiva -t,x)\right\} _{(t,x,s,y)\in\RR_{\uparrow}^{4}}.\label{eq:Z_reflect}
\end{equation}
\item [{(Shear)}] For each $r,\nu\in\RR^{2}$, we have
\begin{equation}
\left\{ \e^{\nu(x-y)+\frac{\nu^{2}}{2}(t-s)}Z(t,x+\nu(t-r)\viiva s,y+\nu(s-r))\right\} _{(t,x,s,y)\in\RR_{\uparrow}^{4}}\overset{\mathrm{law}}{=}\left\{ Z(t,x\viiva s,y)\right\} _{(t,x,s,y)\in\RR_{\uparrow}^{4}}.\label{eq:Z_shear}
\end{equation}
 
\end{description}
\end{prop}

\begin{rem} \label{rmk:shear}
It is a consequence of \cref{eq:Z_shear} that, if $\theta\in\RR$
and $h_{\theta}$ and $h_{0}$ each solve \cref{eq:KPZ} with
$h_{\theta}(0,x)=h_{0}(0,x)+\theta x$, then
\begin{equation}
\left\{ h_{\theta}(t,x-\theta t)\right\} _{(t,x)\in\RR_{+}\times\RR}\overset{\mathrm{law}}{=}\left\{ h_{0}(t,x)+\theta x-\frac{\theta^{2}}{2}t\right\} _{(t,x)\in\RR_{+}\times\RR}.\label{eq:h_shear}
\end{equation}
To see this from \cref{eq:Z_shear}, note that
\begin{align*}
h_{\theta}(t,x-\theta t) & =\log\int_{\RR}Z(t,x-\theta t\viiva0,y)\e^{h_{0}(0,y)+\theta y}\,\dif y\\
\overset{\mathrm{law}}&{=}-\frac{\theta^{2}}{2}t+\theta x+\log\int_{\RR}Z(t,x\viiva0,y)\e^{h_{0}(y)}\,\dif y=-\frac{\theta^{2}}{2}t+\theta x+h_{0}(t,x),
\end{align*}
and indeed the distributional equality holds as processes in $(t,x)\in\RR_{+}\times\RR$.
\end{rem}

Finally, we will use the following estimate from \cite{Janj-Rass-Sepp-22}:
\begin{lem}[{\cite[Lemma 6.6]{Janj-Rass-Sepp-22}}]
\label{lem:shapefn}The following holds with probability $1$. For
all $\theta\in\RR$, all $-\infty\le\lambda_{1}<\lambda_{2}\le\infty$,
and all $C<\infty$,
\[
\adjustlimits\lim_{t\to+\infty}\sup_{r,y\in[-C,C]}\left|\frac{1}{t}\log\int_{\lambda_{1}t}^{\lambda_{2}t}Z(t+r,y\viiva0,x)\e^{\theta x}\,\dif x-\sup_{\lambda_{1}<\lambda<\lambda_{2}}\left\{ \theta\lambda-\frac{\lambda^{2}}{2}-\frac{1}{24}\right\} \right|=0.
\]
\end{lem}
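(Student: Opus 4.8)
\emph{Proof plan.} The plan is to deduce the lemma from an almost-sure \emph{shape theorem} for the propagator: on a single event of probability one,
\begin{equation*}
\frac{1}{t}\log Z(t+r,y\viiva 0,\lambda t)\longrightarrow -\frac{\lambda^{2}}{2}-\frac{1}{24}\qquad(t\to\infty)
\end{equation*}
uniformly over $r,y$ in a fixed compact set and $\lambda$ in a fixed compact interval. Granting this, the substitution $x=\lambda t$ rewrites the integral in the lemma as $t\int_{\lambda_{1}}^{\lambda_{2}}Z(t+r,y\viiva 0,\lambda t)\,\e^{\theta\lambda t}\,\dif\lambda$, and a Laplace-type (Varadhan) argument finishes the proof. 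Writing $L\coloneqq\sup_{\lambda_{1}<\lambda<\lambda_{2}}\{\theta\lambda-\lambda^{2}/2-1/24\}$ and letting $\lambda^{\star}$ (finite, being $\theta$ truncated to $[\lambda_{1},\lambda_{2}]$) be the maximizer: for the lower bound one restricts the $\lambda$-integral to a small interval around $\lambda^{\star}$ and applies the shape theorem there; for the upper bound one splits off a large compact $\lambda$-window containing $\lambda^{\star}$, on which the shape theorem bounds the integrand by $\exp\{t(L+o(1))\}$, while the complementary tail is controlled in expectation, using $\EE[Z(s,a\viiva0,b)]=G(s,a-b)$: one has $t\,\EE\int_{|\lambda|\ge M}Z(t+r,y\viiva 0,\lambda t)\e^{\theta\lambda t}\,\dif\lambda=\int_{|x|\ge Mt}G(t+r,x-y)\e^{\theta x}\,\dif x\le\e^{-cM^{2}t}$ for $M$ large (depending only on $\theta,C$), so Markov's inequality and Borel--Cantelli make the tail at most $\exp\{t(L-1)\}$, uniformly in $r,y\in[-C,C]$. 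Letting the window grow and the small interval shrink gives the lemma, for all $\theta,\lambda_{1},\lambda_{2},C$ at once, since these enter only through the deterministic part.

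For the shape theorem, it suffices (by intersecting the probability-one events obtained from the compacts $[-n,n]^{3}$, $n\in\NN$) to fix one such compact and prove uniform convergence on it. The genuinely probabilistic input is the single pointwise limit $\tfrac1t\log Z(t,0\viiva0,0)\to-\tfrac1{24}$ a.s.: from the one-point fluctuation result $\log Z(t,0\viiva0,0)+t/24=O(t^{1/3})$ in distribution \cite{Amir-Corwin-Quastel-2011} together with the (stretched-exponential, one-sided) tail bounds on $\log Z(t,0\viiva0,0)+t/24$, one gets $\tfrac1t(\log Z(t,0\viiva0,0)+t/24)\to0$ along the integers by Borel--Cantelli, and then along all times by the a.s.\ H\"older-in-time regularity of $t\mapsto\log Z(t,0\viiva0,0)$. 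The shear covariance \cref{eq:Z_shear} together with the shift \cref{eq:Z_shift} gives, for each fixed $t$, the identity in law $Z(t,0\viiva0,\lambda t)\overset{\mathrm{law}}{=}\e^{-\lambda^{2}t/2}Z(t,0\viiva0,0)$, which transfers both the in-probability convergence and the tail bounds to direction $\lambda$; Borel--Cantelli along the integers plus the a.s.\ joint space-time regularity of $Z$ then yield, for each fixed rational $\lambda$, the a.s.\ limit $\tfrac1t\log Z(t,0\viiva0,\lambda t)\to-\lambda^{2}/2-\tfrac1{24}$. Finally, the a.s.\ spatial and temporal modulus of continuity of $Z$ --- whose oscillation over $(r,y)$ in a compact set, and over $\lambda$ in an $\eps$-net of a compact interval, is $\e^{o(t)}$ --- upgrades this to the claimed uniform limit over the fixed compact in $(r,y,\lambda)$.

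The step I expect to be the main obstacle is establishing the shape theorem in the \emph{almost-sure and uniform} form used above, rather than in the weaker in-probability, pointwise form that follows more directly. The one-point input is distributional, so the upgrade to an a.s.\ limit requires quantitative (at worst stretched-exponential) tail control on $\log Z(t,0\viiva0,0)+t/24$ to feed Borel--Cantelli; and, although the truly global spatial behavior (the unbounded integration range appearing when $\lambda_{1}=-\infty$ or $\lambda_{2}=+\infty$) can be sidestepped by the first-moment estimate above, one still needs care that all the error terms coming from the shear transfer and from the spatio-temporal regularity of $Z$ are $\e^{o(t)}$ uniformly over the relevant compact sets, and that the $\eps$-net argument for uniformity in $\lambda$ interacts correctly with the linear-in-$t$ drift of the spatial argument $\lambda t$. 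By contrast, the Laplace-method reduction and the variational identification of $L$ are routine.
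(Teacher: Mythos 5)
The paper does not prove this statement; it is cited verbatim from Janjigian--Rassoul-Agha--Sepp\"al\"ainen (their Lemma 6.6), so there is no in-paper proof to compare against. Your plan---an a.s.\ uniform shape theorem for $\tfrac1t\log Z(t+r,y\viiva 0,\lambda t)$ followed by a Laplace-method reduction---is the natural route, and your shear identity $Z(t,0\viiva 0,\lambda t)\overset{\mathrm{law}}{=}\e^{-\lambda^{2}t/2}Z(t,0\viiva 0,0)$ is correct (combine \cref{eq:Z_shift}, \cref{eq:Z_reflect}, and \cref{eq:Z_shear}); the one-point stretched-exponential tails needed for Borel--Cantelli are indeed available from the Corwin--Ghosal estimates used elsewhere in the paper.

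The concrete gap I would flag is in the tail control. Your bound $\EE\int_{|x|\ge Mt}Z(t+r,y\viiva 0,x)\e^{\theta x}\,\dif x\le\e^{-cM^{2}t}$, followed by Markov and Borel--Cantelli, yields a.s.\ smallness of the tail integral only for a \emph{fixed} $(r,y)$; the assertion ``uniformly in $r,y\in[-C,C]$'' does not follow from the computation as written, since the sup over a continuum of $(r,y)$ is not controlled by pointwise first-moment bounds. To close this, either run the Borel--Cantelli argument over a $t^{-1}$-net in $(r,y)$ (of polynomially many points, harmless against stretched-exponential tails) and patch the net by a modulus-of-continuity estimate for $\log Z$, or first bound $\sup_{r,y}Z(t+r,y\viiva 0,x)$ pointwise in $x$ by the same regularity input and then integrate. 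Relatedly, you invoke a.s.\ $\e^{o(t)}$ oscillation bounds three times---to pass from integer to real $t$, to pass from a rational net in $\lambda$ to all $\lambda$ (here the spatial argument moves by $\eps t$, so you need control on windows growing linearly in $t$), and to control the $(r,y)$ dependence---but do not name a concrete source that delivers all three uniformly over the relevant compacts; this is the genuine work. You honestly acknowledge these as the hard points, and the conceptual skeleton (shear transfer, Borel--Cantelli upgrade, Laplace asymptotics, moment tail bound) is sound and fillable.
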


\subsection{Stationarity properties}\label{subsec:stationarity-properties}

We now turn our attention to what is known about the ergodic theory
of the KPZ equation. First we recall the single-$\theta$ stationary
solutions.
\begin{defn}
\label{def:muthetadef}For $\theta\in\RR$, we let $\mu_{\theta}$
be the law of $x\mapsto B(x)+\theta x$, where $B$ is a standard
two-sided Brownian motion with $B(0)=0$.
\end{defn}

It is clear from the definitions and standard properties of Brownian
motion that
\[
\mu_{\theta}(\mathcal{C}_{\mathrm{KPZ};0})=1
\]
(recalling the definition \cref{eq:CKPZ0def}). The law $\mu_{\theta}$
is invariant for the recentered KPZ dynamics, as we state in the following
proposition. Recall the definition \cref{eq:pidef} of $\pi_{0}$.
\begin{prop}
\label{prop:mu-invariant}If $h$ solves \cref{eq:KPZ} with
initial condition $h(0,\cdot)\sim\mu_{\theta}$ independent of the
noise, then $\pi_{0}[h(t,\cdot)]\sim\mu_{\theta}$ for each $t>0$
as well.
\end{prop}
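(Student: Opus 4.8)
The statement to prove is \cref{prop:mu-invariant}: that $\mu_\theta$, the law of two-sided Brownian motion with drift $\theta$, is invariant under the recentered KPZ dynamics $\pi_0[h(t,\cdot)]$. The plan is to reduce this to the known stationarity of the stochastic Burgers equation, or equivalently to the classical computation that, for fixed $t > 0$, the spatial process $x \mapsto h(t,x) - h(t,0)$ is again a two-sided Brownian motion with drift $\theta$ when the initial data is. This is a well-established fact (it goes back to \cite{Bertini-Giacomin-1997,Funaki-Quastel-2015}, and the version on the line we want is exactly what is used in \cite{Janj-Rass-Sepp-22}), so strictly speaking the work here is to cite it correctly and to check that our conventions (Cole--Hopf solution via $h = \log\int Z \e^{h(0,\cdot)}$, the function space $\mathcal{C}_{\mathrm{KPZ};0}$) match the references. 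Accordingly I would keep the proof short.

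First I would recall the stationarity statement in the form that is proved in the literature: if $h(0,\cdot) \sim \mu_\theta$ is independent of the white noise $W$, then for each fixed $t > 0$, $\pi_0[h(t,\cdot)] \sim \mu_\theta$, where $h$ is the Cole--Hopf solution. The cleanest route is to invoke the stochastic Burgers equation: the spatial derivative $u(t,\cdot) = \partial_x h(t,\cdot)$ (in the distributional sense) solves the conservative stochastic Burgers equation, and it is known that spatial white noise with mean $\theta$ — equivalently, the derivative of $B(\cdot) + \theta\cdot$ — is invariant for that dynamics. Integrating back from $0$ and using that $h(t,0)$ is subtracted off recovers the statement for $\pi_0[h(t,\cdot)]$. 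Alternatively, and perhaps more in keeping with the present paper's framework, one can cite \cite[Theorem~3.10 or the relevant stationarity statement]{Janj-Rass-Sepp-22}, where $\mu_\theta$ is identified as an invariant measure for the recentered equation on precisely the space $\mathcal{C}_{\mathrm{KPZ};0}$; since our solution theory is the same (both follow \cite{Alb-Janj-Rass-Sepp-22}), no further argument is needed.

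The one genuine point to verify — and the closest thing to an obstacle — is the \emph{Markov/semigroup} upgrade: \cref{prop:mu-invariant} asserts invariance under the dynamics, which via \cref{prop:projected-feller} (with $N=1$ and $g[f] = f(0)$) is the statement that $\mu_\theta$ is a stationary measure for the Feller Markov semigroup $(P_t^{\pi_0})_{t\ge 0}$. The fixed-time distributional identity $\pi_0[h(t,\cdot)] \sim \mu_\theta$ gives exactly $\mu_\theta P_t^{\pi_0} = \mu_\theta$ for each $t$, which is the definition of invariance, so this is immediate once the fixed-time statement is in hand. I would therefore organize the proof as: (i) by \cref{prop:projected-feller}, $\pi_0[h(t,\cdot)]$ evolves as a Feller Markov process on $\mathcal{C}_{\mathrm{KPZ};0}$; (ii) the fixed-time law identity $\pi_0[h(t,\cdot)] \sim \mu_\theta$ when $h(0,\cdot)\sim\mu_\theta$ is the stationarity of Brownian motion with drift for the Cole--Hopf/Burgers dynamics, citing \cite{Bertini-Giacomin-1997,Funaki-Quastel-2015,Janj-Rass-Sepp-22}; (iii) conclude. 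The main care needed is simply to make sure the cited stationarity result is stated for the whole line (not the torus or half-line) and for the two-sided Brownian initial data with the drift placed as in \cref{def:muthetadef}; all of these are available in \cite{Janj-Rass-Sepp-22}, so I expect no real difficulty.
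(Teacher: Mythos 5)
Your proposal is correct and is essentially the same as the paper's proof, which is also a citation argument: the paper cites \cite[Proposition B.1]{Bertini-Giacomin-1997} for $\theta=0$ and then deduces the general $\theta$ case from the shear-invariance \cref{eq:h_shear}, with pointers to \cite{Funaki-Quastel-2015} and \cite{Janj-Rass-Sepp-22} for statements covering general $\theta$ directly. Your extra discussion of the Markov/Feller semigroup formulation is fine but not needed, since the proposition as stated is just the fixed-time distributional identity; the only small difference is that the paper spells out the reduction from $\theta=0$ to general $\theta$ via shear invariance rather than citing the general-$\theta$ statement directly.
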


\cref{prop:mu-invariant} was proved for $\theta=0$ in \cite[Proposition B.1]{Bertini-Giacomin-1997},
and the result for general $\theta$ follows from the shear-invariance
\cref{eq:h_shear}. See also \cite[Theorem 1.2]{Funaki-Quastel-2015}
and \cite[Theorem 3.26(i)]{Janj-Rass-Sepp-22}.

Next, we consider jointly stationary solutions to \cref{eq:KPZ}.
These were considered in \cite{GRASS-23}, and we now review the results
proved there that we will need.

Let $\theta>0$. Consider the mapping $\mathcal{D}\colon\mathcal{Y}_{0}(\theta)\to\mathcal{X}_{0}(\theta)$
defined by
\begin{equation}
\mathcal{D}[f_{-},f_{+}](x)\coloneqq\left(f_{-}(x),f_{+}(x)+\log\frac{\int_{-\infty}^{x}e^{(f_{+}(y)-f_{+}(x))-(f_{-}(y)-f_{-}(x))}\,\dif y}{\int_{-\infty}^{0}e^{f_{+}(y)-f_{-}(y)}\,\dif y}\right).\label{eq:Ddef}
\end{equation}
That the function $\mathcal{D}$ in fact takes $\mathcal{Y}_{0}(\theta)$
to $\mathcal{X}_{0}(\theta)$ is proved in \cite[Lemmas 2.2–2.3]{GRASS-23}.
The following is a restatement of the definition \cref{eq:nuthetadef}
of $\nu_{\theta}$ given in the introduction.
\begin{defn}
We denote by $\nu_{\theta}$ the law of $\mathcal{D}[B_{1}(\cdot)-\theta\cdot,B_{2}(\cdot)+\theta\cdot]$,
where $B_{1},B_{2}$ are independent two-sided Brownian motions with
$B_{1}(0)=B_{2}(0)=0$.
\end{defn}

Since $(B_{1}(\cdot)-\theta\cdot,B_{2}(\cdot)+\theta\cdot)$ is evidently
an element of $\mathcal{Y}_{0}(\theta)$ with probability $1$, and
$\mathcal{D}$ maps $\mathcal{Y}_{0}(\theta)$ to $\mathcal{X}_{0}(\theta)$
as observed above, we have
\begin{equation}
\nu_{\theta}(\mathcal{X}_{0}(\theta))=1.\label{eq:alwaysinX0}
\end{equation}
We also note that
\begin{equation}
\mathcal{D}[B_{1}(\cdot)-\theta\cdot,B_{2}(\cdot)+\theta\cdot](x)=\left(B_{1}(x)-\theta x,B_{2}(x)+\theta x+\mathcal{S}_{\theta}(x)-\mathcal{S}_{\theta}(0)\right),\label{eq:DBs}
\end{equation}
where
\begin{equation}
\mathcal{S}_{\theta}(x)=\log\int_{-\infty}^{x}\exp\left\{ (B_{2}(y)-B_{2}(x))-(B_{1}(y)-B_{1}(x))+2\theta(y-x)\right\} \,\dif y.\label{eq:Sdef}
\end{equation}
We further observe that the process $(\mathcal{S}_{\theta}(x))_{x}$
is stationary in space.

We now recall the result of \cite{GRASS-23} that $\nu_{\theta}$
is an invariant measure for the spatial increments of the KPZ equation.
\begin{prop}[{\cite[Theorem 1.1]{GRASS-23}}]
\label{prop:joint-stationarity}Suppose that $\mathbf{h}=(h_{-},h_{+})$
is a vector of two solutions to \cref{eq:KPZ} for $t>s$ with
$\mathbf{h}(s,\cdot)\sim\nu_{\theta}$ (independent of the noise).
Then, for each $t>s$, we have $\pi_{0}[\mathbf{h}(t,\cdot)]\sim\nu_{\theta}$
as well.
\end{prop}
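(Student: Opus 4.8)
The plan is to follow the queueing-theoretic strategy underlying the construction of $\nu_{\theta}$ in \cite{GRASS-23}. By \cref{prop:projected-feller} (applied with $g$ the evaluation map at $x=0$), the recentered pair process $(\pi_{0}[\mathbf{h}(t,\cdot)])_{t\ge s}$ is a well-defined Feller Markov process on $\mathcal{C}_{\mathrm{KPZ};0}^{2}$; by the shift invariance \cref{eq:Z_shift} of the noise we may take $s=0$, and then it suffices to verify, for each fixed $t>0$, that one application of the KPZ evolution over $[0,t]$ maps $\nu_{\theta}$ to $\nu_{\theta}$. The first real step is to re-express $\nu_{\theta}$ in ``queueing form.'' Unwinding \cref{eq:Ddef} (equivalently, reading off \cref{eq:DBs} and \cref{eq:fplusfminus}) one finds that, with $\sigma(x)=\e^{B_{1}(x)-\theta x}$ and $a(x)=\e^{B_{2}(x)+\theta x}$ two independent geometric two-sided Brownian motions, a sample $(f_{-},f_{+})\sim\nu_{\theta}$ is $f_{-}=\log\sigma$ and $\e^{f_{+}}=\mathsf{Q}[\sigma,a]$, where
\[
\mathsf{Q}[\sigma,a](x)\;=\;\frac{\sigma(x)\,\int_{-\infty}^{x}a(y)/\sigma(y)\,\dif y}{\int_{-\infty}^{0}a(y)/\sigma(y)\,\dif y}
\]
is the ``departure process'' of a continuum tandem queue with ``service'' $\sigma$ and ``arrivals'' $a$. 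Thus the task is to understand how this queueing representation interacts with the coupled stochastic heat equation.

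Concretely, what must be shown is a \textbf{continuum Burke property} for the stochastic heat equation \cref{eq:SHE}: if $\phi_{-},\phi_{+}$ solve \cref{eq:SHE} driven by the \emph{same} noise with $(\phi_{-},\phi_{+})(0,\cdot)=(\sigma,\mathsf{Q}[\sigma,a])$ and $(\sigma,a)$ independent geometric Brownian motions with drifts $-\theta$ and $+\theta$, then after running to time $t$ and recentering at $x=0$, applying the inverse of $\mathsf{Q}$ recovers a pair of \emph{independent} geometric two-sided Brownian motions with drifts $-\theta$ and $+\theta$ (and hence $\pi_{0}[(\log\phi_{-},\log\phi_{+})(t,\cdot)]\sim\nu_{\theta}$). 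The two marginal statements $\pi_{0}[\log\phi_{\pm}(t,\cdot)]\sim\mu_{\pm\theta}$ are precisely \cref{prop:mu-invariant} and carry no new information; the content is the \emph{joint} statement, i.e.\ that the recentered time-$t$ departure process is itself the departure process of a queue fed by \emph{independent} geometric-Brownian service and arrivals. Proving this is the main obstacle, and it is exactly here that the explicit integrable structure of $\nu_{\theta}$ enters.

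To establish the continuum Burke property I would pursue one of two routes. The first is discrete approximation: the O'Connell--Yor semidiscrete polymer (or the log-gamma polymer, following the queueing-theoretic work of Sepp\"al\"ainen and collaborators) carries an explicit one-parameter family of jointly stationary measures for pairs of partition functions, and their invariance is proved by a finite Burke-type computation together with reversibility of the associated tandem queue; under the intermediate-disorder scaling the partition functions converge to solutions of \cref{eq:SHE}, the stationary measures converge (on the relevant path space) to $\nu_{\theta}$, and the joint invariance passes to the limit using the Feller property from \cref{prop:projected-feller} together with the fact that invariance of a measure is preserved under weak limits of Feller semigroups. The second, more self-contained route is a direct Girsanov argument on the white noise: one shows that after an explicit change of measure the coupled evolution acquires a time-reversal symmetry (compatible with the reflection invariance \cref{eq:Z_reflect}) that interchanges $\mathsf{Q}$ with $\mathsf{Q}^{-1}$, so that the Burke property can be read off directly. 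Either way, the analytic difficulty is controlling the stochastic heat equation started from the unbounded, a priori non-Brownian-looking initial data produced by $\mathsf{Q}$ --- via moment and tightness estimates uniform in the mesh in the discrete route, or via a careful Girsanov computation for the stochastic integral $\int\phi\,\dif W$ in the continuum route --- and this is the step I expect to be substantially more involved than the single-component \cref{prop:mu-invariant}.
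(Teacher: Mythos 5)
The paper does not prove this proposition; it imports it wholesale as \cite[Theorem~1.1]{GRASS-23}, so there is no internal proof to compare your argument against. What you have written is therefore better read as a sketch of the proof \emph{in the reference}, and as such it is accurate at the structural level: your operator $\mathsf{Q}$ is exactly the second component of $\mathcal D$ from \cref{eq:Ddef} written in exponential coordinates (this is the computation behind \cref{eq:DBs} and \cref{eq:fplusfminus}, using $\sigma(0)=a(0)=1$); the reduction to one application of the Markov semigroup via \cref{prop:projected-feller} and the shift invariance \cref{eq:Z_shift} is correct; and you rightly isolate the content beyond the marginal \cref{prop:mu-invariant} as a continuum Burke-type property, with the intermediate-disorder limit of the O'Connell--Yor semidiscrete polymer as the mechanism used in \cite{GRASS-23}.

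However, the proposal is not a proof. Everything past ``what must be shown is a continuum Burke property'' is a description of two candidate programs, not an argument. The first route (discrete approximation) requires the full Burke/reversibility computation for the semidiscrete tandem queue, convergence of the discrete jointly stationary measures to $\nu_\theta$ in the topology of $\mathcal C_{\mathrm{KPZ};0}^{2}$, tightness and uniform-integrability controls for the partition functions under intermediate-disorder scaling, and a justification that invariance of a measure passes through this weak limit of Feller semigroups; none of these is carried out. The second route (Girsanov/time-reversal) is asserted without identifying the change of measure, the symmetry, or how it interchanges $\mathsf{Q}$ with $\mathsf{Q}^{-1}$. Since the entire mathematical content of the proposition resides precisely in the step you have labelled ``the main obstacle'' and then left open, the proposal would not suffice as a replacement for the citation; what is missing is exactly the hard part.
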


Finally, we address the stability/convergence properties of the measures
$\nu_{\theta}$. Again, we only state the convergence result that
we need.
\begin{prop}[{\cite{Janj-Rass-Sepp-22,GRASS-23}}]
\label{prop:spatially-homog-convergence}  For any $\theta>0$,  there is
a random process $\overline{\mathbf{f}}=(\overline{f}_{-},\overline{f}_{+})\sim\nu_{\theta}$
such that the following holds with probability one. For any $\mathbf{f}=(f_{-},f_{+})\in\mathcal{Y}(\theta)$, let $\mathbf{h}^{T}$ be a vector of solutions
to \cref{eq:KPZ} with initial data $\mathbf{h}^{T}(-T,\cdot)=\mathbf{f}$.
Then we have the convergence
\begin{equation}\label{eq:convergetofbar}
\lim_{T\to\infty}\pi_{0}[\mathbf{h}^{T}(0,\cdot)]=\overline{\mathbf{f}}.
\end{equation}
 in the topology of $\mathcal{C}_{\mathrm{KPZ};0}$.

As a consequence of this and the temporal invariance of the KPZ equation,
we see that if $\mathbf{h}$ is a vector of solutions to \cref{eq:KPZ}
with initial data $\mathbf{h}(0,\cdot)=\mathbf{f}$, then $\pi_{0}[\mathbf{h}(t,\cdot)]$
converges in distribution to $\overline{\mathbf{f}}$ as $t\to\infty$.
\end{prop}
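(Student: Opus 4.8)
The plan is to derive this by stitching together two results from the literature: the almost-sure ``pull-back'' (Busemann) convergence of the KPZ equation from the distant past, established in \cite{Janj-Rass-Sepp-22}, and the explicit identification of the joint law of the resulting limit for two opposite drifts, carried out in \cite{GRASS-23}. First I would invoke \cite{Janj-Rass-Sepp-22} to produce, on the same probability space as $W$ and on a single event of probability one, a noise-measurable random pair $\overline{\mathbf f}=(\overline f_-,\overline f_+)$ such that, \emph{simultaneously} for every $\mathbf f=(f_-,f_+)\in\mathcal Y(\theta)$, the vector $\mathbf h^T$ of solutions with $\mathbf h^T(-T,\cdot)=\mathbf f$ satisfies $\pi_0[\mathbf h^T(0,\cdot)]\to\overline{\mathbf f}$ in $\mathcal C_{\mathrm{KPZ};0}^2$. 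Two points must be matched up when quoting this: that $\mathcal Y(\theta)$ lies inside the basin of attraction as formulated there (which holds because each coordinate has a single asymptotic slope $\pm\theta$), and that the convergence there is in the $\mathcal C_{\mathrm{KPZ};0}$ topology used here. The uniformity over $\mathbf f$ is the ``soft'' ingredient: because the Cole--Hopf solution operator is monotone in the initial condition, an arbitrary initial condition with the prescribed asymptotic slopes can be compared with affine ones, and one uses that solutions started from data with the same asymptotic slope coalesce as $T\to\infty$; this is precisely the mechanism exploited in \cite{Janj-Rass-Sepp-22}.

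It then remains only to show $\Law(\overline{\mathbf f})=\nu_\theta$. I would read this off from \cite{GRASS-23}, where the joint distribution of the KPZ Busemann functions in two directions with opposite drifts is computed explicitly and shown to coincide with the law of $\mathcal D[\,B_1(\cdot)-\theta\cdot,\ B_2(\cdot)+\theta\cdot\,]$, i.e.\ $\nu_\theta$ as in \cref{eq:nuthetadef}. A more self-contained route is also available: $\Law(\overline{\mathbf f})$ is invariant for the recentered two-component KPZ dynamics, since $\overline{\mathbf f}$ is the pull-back attractor and the noise is stationary in time; its marginals are $\mu_{-\theta}$ and $\mu_\theta$ by the single-direction theory of \cite{Janj-Rass-Sepp-22}; and $\overline f_+-\overline f_-$ is strictly increasing almost surely by \cref{prop:Xpreserved} together with \cref{eq:alwaysinX0}. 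One would then conclude by the uniqueness, within the classification of \cite{GRASS-23} and \cref{prop:joint-stationarity}, of the jointly invariant measure on $\mathcal X_0(\theta)$ with these marginals. Either way, $\overline{\mathbf f}\sim\nu_\theta$.

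Finally, the ``in distribution'' consequence is a routine application of temporal stationarity of the white noise: if $\mathbf h$ solves \cref{eq:KPZ} with $\mathbf h(0,\cdot)=\mathbf f$, then for each fixed $t$ the law of $\pi_0[\mathbf h(t,\cdot)]$ equals that of $\pi_0[\mathbf h^t(0,\cdot)]$ for $\mathbf h^t$ started from $\mathbf h^t(-t,\cdot)=\mathbf f$, and the latter converges almost surely, hence in distribution, to $\overline{\mathbf f}\sim\nu_\theta$ by the first part. I expect the genuine difficulty to lie entirely in the second ingredient: the existence of a pull-back attractor is comparatively soft (monotonicity plus coalescence), but pinning the limiting \emph{joint} law down to be exactly $\nu_\theta$ requires the integrable-probability input of \cite{GRASS-23} (a stationary queueing/RSK-type construction), which in the present paper we simply cite.
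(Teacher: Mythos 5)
Your proposal is correct and follows essentially the same route as the paper: pull-back convergence from \cite[Theorem 3.23]{Janj-Rass-Sepp-22}, then identification of the limiting law via invariance of $\Law(\overline{\mathbf f})$ (the paper phrases this as a Krylov--Bogoliubov argument using \cref{prop:projected-feller}) combined with the asymptotic slopes from \cite{Janj-Rass-Sepp-22} and the uniqueness of the jointly invariant measure from \cite[Theorem 1.1]{GRASS-23}. The one point you flag but leave unaddressed — upgrading from locally uniform convergence to the $\mathcal{C}_{\mathrm{KPZ};0}$ topology — the paper handles explicitly via dominated convergence together with the growth bounds of \cite[Lemma 7.6]{Janj-Rass-Sepp-22}.
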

\begin{proof}
The existence of an $\overline{\mathbf{f}}$ such that the convergence \cref{eq:convergetofbar} holds uniformly on compact sets is \cite[Theorem~3.23]{Janj-Rass-Sepp-22}. That the convergence in fact holds in the topology of $\mathcal{C}_{\mathrm{KPZ};0}$ (i.e.\ that all integrals of the form $\int_\RR \e^{-ax^2+ h^T_{\pm}(0,x)}\,\dif x$, with $a>0$, converge) is then a consequence of the dominated convergence theorem and \cite[Lemma~7.6]{Janj-Rass-Sepp-22}. Since the Markov process has the Feller property (\cref{prop:projected-feller}), a Krylov--Bogoliubov argument (see e.g.\ \cite[Theorem 3.1.1]{Da-Prato-Zabczyk-1996}) shows that the limit $\overline{\mathbf{f}}$ must be distributed according to a jointly invariant measure for \cref{eq:KPZ}. Also, its two components must have asymptotic slopes $\pm\theta$ by \cite[Theorem~3.1(d)]{Janj-Rass-Sepp-22}. But $\nu_\theta$ is the unique such jointly invariant measure by \cite[Theorem~1.1]{GRASS-23}, and so in fact we have $\overline{\mathbf{f}}\sim\nu_\theta$.
\end{proof}

\begin{rem}
In fact, the basin of attraction of the measure $\nu_{\theta}$ is
larger than $\mathcal{Y}(\theta)$; see the discussion after \cref{thm:whatcanyouconvergeto} and also \cite[Lemma 2.22 and Theorem 3.23]{Janj-Rass-Sepp-22}.
\end{rem}

\subsection{The shock reference frame}

In this section we prove \cref{thm:nuhat}, closely following
the proof of \cite[Theorem 1.1]{Dunlap-Ryzhik-2020}. We first introduce
some notation. For $\mathbf{f}=(f_{-},f_{+})\in\mathcal{X}(\theta)$,
we define
\begin{equation}
\mathfrak{b}[\mathbf{f}]\coloneqq(f_{+}-f_{-})^{-1}(0).\label{eq:bfdef}
\end{equation}
Then we can define
\begin{equation}
\label{eq:piShdef}
\pi_{\mathrm{Sh}}[\mathbf{f}](x)=\pi_{\mathfrak{b}[\mathbf{f}]}[\mathbf{f}](x)=\mathbf{f}(\mathfrak{b}[\mathbf{f}]+x)-\mathbf{f}(\mathfrak{b}[\mathbf{f}]).
\end{equation}
The map $\pi_{\mathrm{Sh}}$ translates the graph of $\mathbf{f}$
horizontally and vertically so that the intersection point of the
graphs of $f_{-}$ and $f_{+}$ is moved to the origin. Recall the
definitions \cref{eq:pidef} of $\pi_{x}$ and \cref{eq:Ddef}
of $\mathcal{D}$. We need a result on how these maps intertwine.
\begin{lem}
\label{lem:Dshift}For each $(f_{-},f_{+})\in\mathcal{Y}_{0}(\theta)$
and $x\in\RR$, we have
\[
\pi_{x}[\mathcal{D}[f_{-},f_{+}]]=\mathcal{D}[\pi_{x}[(f_{-},f_{+})]].
\]
\end{lem}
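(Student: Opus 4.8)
The statement is an intertwining identity $\pi_x[\mathcal{D}[f_-,f_+]] = \mathcal{D}[\pi_x[(f_-,f_+)]]$, so the natural approach is to write out both sides using the explicit formula \cref{eq:Ddef} for $\mathcal{D}$ and the definition \cref{eq:pidef} of $\pi_x$, and check that they agree coordinate by coordinate. I would fix $(f_-,f_+)\in\mathcal{Y}_0(\theta)$ and $x\in\RR$, and first observe the slight subtlety that $\mathcal{D}$ is defined only on $\mathcal{Y}_0(\theta)$, i.e.\ on pairs vanishing at $0$, and that $\pi_x[(f_-,f_+)]$ does vanish at $0$ and lies in $\mathcal{Y}_0(\theta)$ (the asymptotic-slope condition is translation-invariant), so both sides make sense.

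\textbf{First coordinate.} This is immediate: the first coordinate of $\mathcal{D}[f_-,f_+]$ is just $f_-$, so the first coordinate of $\pi_x[\mathcal{D}[f_-,f_+]]$ is $y\mapsto f_-(x+y)-f_-(x) = \pi_x f_-(y)$, which is exactly the first coordinate of $\mathcal{D}[\pi_x[(f_-,f_+)]]$.

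\textbf{Second coordinate.} Here is where the real computation lies, though it is still routine. Writing $g=f_+-f_-$, the second coordinate of $\mathcal{D}[f_-,f_+]$ at a point $z$ is
\[
f_+(z) + \log\frac{\int_{-\infty}^{z} e^{g(y)-g(z)}\,\dif y}{\int_{-\infty}^{0} e^{g(y)}\,\dif y}
= f_+(z) - g(z) + \log\frac{\int_{-\infty}^{z} e^{g(y)}\,\dif y}{\int_{-\infty}^{0} e^{g(y)}\,\dif y}.
\]
Applying $\pi_x$ means evaluating at $z=x+y$, subtracting the value at $z=x$, and the denominators $\int_{-\infty}^0 e^{g}$ cancel in the difference of logs. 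So the second coordinate of $\pi_x[\mathcal{D}[f_-,f_+]]$ at $y$ becomes $\pi_x f_+(y) - (g(x+y)-g(x)) + \log\frac{\int_{-\infty}^{x+y} e^{g}}{\int_{-\infty}^{x} e^{g}}$. On the other side, for $\mathcal{D}[\pi_x[(f_-,f_+)]]$ one uses the pair $(\pi_x f_-,\pi_x f_+)$, whose difference is $\tilde g(y)\coloneqq g(x+y)-g(x)$; feeding this into \cref{eq:Ddef} and using the substitution $w=x+y$ in $\int_{-\infty}^{y} e^{\tilde g(u)-\tilde g(y)}\,\dif u$ (so $e^{-g(x)}$ pulls out of numerator and denominator and cancels), one gets exactly the same expression. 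Matching the two completes the proof.

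\textbf{Main obstacle.} There is no deep obstacle; the content is entirely bookkeeping. The one point requiring a little care is making sure all the normalizing integrals $\int_{-\infty}^{\cdot} e^{g}$ are finite so that the logarithms and the cancellations are legitimate — but this is guaranteed because $(f_-,f_+)\in\mathcal{Y}_0(\theta)$ forces $g(y)=f_+(y)-f_-(y)$ to grow like $2\theta y$ as $y\to+\infty$ and like $-2\theta|y|$ as $y\to-\infty$ (up to sublinear corrections, since $f_\pm\in\mathcal C_{\mathrm{KPZ}}$), so $e^{g}$ is integrable near $-\infty$; this is precisely the fact already used in \cite[Lemmas 2.2–2.3]{GRASS-23} to see that $\mathcal{D}$ is well-defined on $\mathcal{Y}_0(\theta)$. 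Given that, the identity is just a change of variables and the cancellation of common factors, so I would present it compactly as the two-coordinate check above.
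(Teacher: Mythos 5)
Your proof is correct and follows essentially the same route as the paper's: both verify the identity coordinate by coordinate, note that the normalizing denominator cancels in the difference defining $\pi_x$, and finish with the change of variables $w\mapsto x+w$ in the remaining integrals. The notational device $g=f_+-f_-$ is a cosmetic simplification of the same computation.
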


\begin{proof}
We write
\begin{align*}
&\pi_{x}  [\mathcal{D}[f_{-},f_{+}]](y)\\
 & =\left(f_{-}(x+y)-f_{-}(x),f_{+}(x+y)-f_{+}(x)+\log\frac{\int_{-\infty}^{x+y}\e^{(f_{+}(w)-f_{+}(x+y))-(f_{-}(w)-f_{-}(x+y))}\,\dif w}{\int_{-\infty}^{x}\e^{(f_{+}(w)-f_{+}(x))-(f_{-}(w)-f_{-}(x))}\,\dif w}\right)\\
 & =\left(f_{-}(x+y)-f_{-}(x),f_{+}(x+y)-f_{+}(x)+\log\frac{\int_{-\infty}^{y}\e^{(f_{+}(x+w)-f_{+}(x+y))-(f_{-}(x+w)-f_{-}(x+y))}\,\dif w}{\int_{-\infty}^{0}\e^{(f_{+}(x+w)-f_{+}(x))-(f_{-}(x+w)-f_{-}(x))}\,\dif w}\right)\\
 & =\mathcal{D}[\pi_{x}[f_{-},f_{+}]](y).\qedhere
\end{align*}
\end{proof}
Now we can prove the following using ergodicity.
\begin{lem}
\label{lem:nuhatexpectation}Let $F\in\mathcal{C}_{\mathrm{b}}(\mathcal{C}_{\mathrm{KPZ};0}^{2})$ and let $\mathbf{f}=(f_-,f_+)$ be an $\mathcal{X}_0(\theta)$-valued random variable for some $\theta > 0$.
Let $\EE_{\nu_{\theta}}$ denote expectation under which $\mathbf{f}=(f_{-},f_{+})\sim\nu_{\theta}$
and let $\EE_{\hat{\nu}_{\theta}}$ denote expectation under
which $\mathbf{f}=(f_{-},f_{+})\sim\hat{\nu}_{\theta}$. Then we have
\begin{equation}
\lim_{L\to\infty}\fint_{0}^{L}\EE_{\nu_{\theta}}\left[F(\pi_{\mathrm{Sh}}[f_{-},f_{+}-\zeta])\right]\,\dif\zeta=\EE_{\hat{\nu}_{\theta}}[F(\mathbf{f})].\label{eq:nuhatexpectation}
\end{equation}
 
\end{lem}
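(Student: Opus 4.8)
The plan is to identify the left-hand side of \cref{eq:nuhatexpectation} as a Ces\`aro average of a stationary process and recognize the limit, via the ergodic theorem, as a reweighting of $\nu_\theta$ by the ergodic ``flux'' density $\frac{1}{2\theta}\partial_x(f_+-f_-)(0)$. The key observation is that, for $\mathbf{f}=(f_-,f_+)\sim\nu_\theta$, the quantity $\mathfrak{b}[f_-,f_+-\zeta]$ is exactly the point $x$ at which $(f_+-f_-)(x)=\zeta$; in other words, vertically shearing $f_+$ down by $\zeta$ and then applying $\pi_{\mathrm{Sh}}$ is the same as centering the pair $\mathbf{f}$ at the horizontal location where the (strictly increasing, by \cref{eq:alwaysinX0}) difference $f_+-f_-$ attains the value $\zeta$. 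So if we substitute $u=\mathfrak{b}[f_-,f_+-\zeta]$, i.e.\ $\zeta=(f_+-f_-)(u)$, the integral $\fint_0^L F(\pi_{\mathrm{Sh}}[f_-,f_+-\zeta])\,\dif\zeta$ becomes $\frac{1}{L}\int F(\pi_u[\mathbf{f}])\,\partial_u(f_+-f_-)(u)\,\dif u$ over an appropriate $u$-range; here the differentiability of $f_+-f_-$ (established in the introduction, \cref{eq:fplusfminus}) makes the change of variables legitimate.

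Next I would pass the expectation inside and use the spatial stationarity of $\nu_\theta$. Under $\mathbf{f}\sim\nu_\theta$ the shifted pair $\pi_u[\mathbf{f}]$ again has law $\nu_\theta$ (this is $\mathcal{D}$ intertwining with $\pi_u$, \cref{lem:Dshift}, applied to the two-sided-Brownian representation, whose increments are stationary), so $G(u)\coloneqq F(\pi_u[\mathbf{f}])\cdot\partial_u(f_+-f_-)(u)$ is a stationary process in $u$ after taking expectations pointwise. The range of $u$ over which we integrate is $[\mathfrak{b}[f_-,f_+],\,\mathfrak b[f_-,f_+-L]]$, which by the asymptotic-slope condition grows like $L/(2\theta)$; the normalization $1/L$ combined with this range produces the factor $1/(2\theta)$ after dividing and multiplying appropriately, and Birkhoff's ergodic theorem (using ergodicity of $\nu_\theta$ under spatial shifts, which is part of the structure recorded in \cref{subsec:stationarity-properties}) gives
\[
\lim_{L\to\infty}\fint_0^L \EE_{\nu_\theta}\bigl[F(\pi_{\mathrm{Sh}}[f_-,f_+-\zeta])\bigr]\,\dif\zeta=\frac{1}{2\theta}\,\EE_{\nu_\theta}\bigl[F(\mathbf f)\,\partial_x(f_+-f_-)(0)\bigr],
\]
where on the right I have used stationarity to evaluate the integrand at $u=0$. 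By the definition \cref{eq:RNderiv} of the Radon--Nikodym derivative $\dif\hat\nu_\theta/\dif\nu_\theta$, the right side is exactly $\EE_{\hat\nu_\theta}[F(\mathbf f)]$, which is the claim.

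\textbf{Main obstacle.}
I expect the analytic bookkeeping around the edges of the change of variables to be the delicate part rather than the conceptual structure. Concretely: (i) justifying the substitution $\zeta\mapsto u=\mathfrak{b}[f_-,f_+-\zeta]$ and the resulting Jacobian $\partial_u(f_+-f_-)(u)$ requires that $f_+-f_-$ is $C^1$ and strictly increasing with everywhere-positive derivative, which follows from \cref{eq:fplusfminus} but should be stated carefully; (ii) controlling the contribution from the lower endpoint $\mathfrak b[f_-,f_+]$ (a fixed random number, washed out by $1/L$) and verifying the upper endpoint asymptotics $\mathfrak b[f_-,f_+-L]\sim L/(2\theta)$ via the slope condition in $\mathcal{X}_0(\theta)$; (iii) checking that $F(\pi_u[\mathbf f])\,\partial_u(f_+-f_-)(u)$ is genuinely integrable in $L^1(\nu_\theta)$ on compact $u$-intervals so that Birkhoff applies — here boundedness of $F$ reduces this to integrability of $\partial_x(f_+-f_-)(0)$ under $\nu_\theta$, which is finite since that is precisely the density appearing in \cref{eq:RNderiv} and $\hat\nu_\theta$ is a probability measure. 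None of these is deep, but they are where care is needed; the ergodic-theorem step itself is then routine once the process is set up.
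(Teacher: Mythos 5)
Your proposal matches the paper's proof essentially line for line: change variables $\zeta\mapsto z=(f_+-f_-)^{-1}(\zeta)$ to turn the $\zeta$-average into a spatial average weighted by $\partial_x(f_+-f_-)$, use the slope $2\theta$ of $f_+-f_-$ to get the normalization, apply the pointwise ergodic theorem for spatial shifts of $\nu_\theta$, and identify the resulting expectation with $\EE_{\hat\nu_\theta}[F]$ via \cref{eq:RNderiv}. Two small clarifications the paper supplies that you should fold in when writing up: the lower endpoint of the $z$-integral is exactly $0$ (not merely a random number washed out by $1/L$) because $\mathbf f\in\mathcal X_0(\theta)$ forces $f_+(0)=f_-(0)$; and the correct order is to apply Birkhoff pathwise to obtain an a.s.\ limit and then invoke the bounded convergence theorem to pass to expectations, rather than passing the expectation inside first, since the limits of integration and Jacobian after the change of variables are random.
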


In \cref{eq:nuhatexpectation} and henceforth, we use the notation $\fint_{0}^{L}\coloneqq\frac{1}{L}\int_{0}^{L}$. On the left side of \cref{eq:nuhatexpectation}, we average over the probability space and also over the physical space but in a non-uniform way, since $\pi_{\mathrm{Sh}}$ is a nonlinear shift. On the right side, we average over the probability space with respect to the tilted measure $\hat{\nu}_\theta$, with the tilt corresponding to the non-uniformity of the spatial shift on the left side. To prove this statement, we will use the ergodic theorem to relate the averaging over the probability space to averaging in physical space, and then using a change of variables in physical space which corresponds to the tilt.
\begin{proof}
  Let
  $\mathbf{f}_{\zeta}=(f_{-},f_{+}-\zeta)$. Recalling the definitions \cref{eq:piShdef,eq:taudef,eq:bfdef} of $\pi_{\mathrm{Sh}}$, $\tau$,  and $\mathfrak{b}$, respectively, we observe that
\[
\pi_{\mathrm{Sh}}[\mathbf{f}_{\zeta}]=\pi_{0}[\tau_{\mathfrak{b}[\mathbf{f}_{\zeta}]}\mathbf{f}_{\zeta}]=\pi_{0}[\tau_{(f_{+}-f_{-})^{-1}(\zeta)}\mathbf{f}],
\]
so
\begin{align*}
\int_{0}^{L}F(\pi_{\mathrm{Sh}}[\mathbf{f}_{\zeta}])\,\dif\zeta&=\int_{0}^{L}F(\pi_{0}[\tau_{(f_{+}-f_{-})^{-1}(\zeta)}\mathbf{f}])\,\dif\zeta\\&=\int_{0}^{(f_{+}-f_{-})^{-1}(L)}F(\pi_{0}[\tau_{z}\mathbf{f}])\partial_{x}(f_{+}-f_{-})(z)\,\dif z,
\end{align*}
where in the last identity we made the change of variables $\zeta=(f_{+}-f_{-})(z)$
and used that $f_{+}(0)=f_{-}(0)$ since $\mathbf{f}\in\mathcal{X}_{0}(\theta)$.
Dividing by $L$, we obtain
\begin{equation}
\fint_{0}^{L}  F(\pi_{\mathrm{Sh}}[\mathbf{f}_{\zeta}])\,\dif\zeta
  =\frac{(f_{+}-f_{-})^{-1}(L)}{L}\fint_{0}^{(f_{+}-f_{-})^{-1}(L)}F(\pi_{0}[\tau_{z}\mathbf{f}])\partial_{x}(f_{+}-f_{-})(z)\,\dif z.
\label{eq:dividebyL}
\end{equation}
Now as $L\to\infty$, we have
\[
\lim_{L\to\infty}\frac{(f_{+}-f_{-})^{-1}(L)}{L}=\frac{1}{2\theta}
\]
since $\mathbf{f}\in\mathcal{X}_{0}(\theta)$ almost surely. We also have
\begin{align*}
  \lim_{L\to\infty} & \fint_{0}^{(f_{+}-f_{-})^{-1}(L)}F(\pi_{0}[\tau_{z}\mathbf{f}])\partial_{x}(f_{+}-f_{-})(z)\,\dif z\\
 & =\lim_{M\to\infty}\fint_{0}^{M}F(\pi_{0}[\tau_{z}\mathbf{f}])\partial_{x}(f_{+}-f_{-})(z)\,\dif z\\
                    &=  \lim_{M\to\infty}\fint_{0}^{M}\tau_z\left[\mathbf{f}\mapsto F(\pi_{0}[\mathbf{f}])\partial_{x}(f_{+}-f_{-})(0)\right](\mathbf{f})\,\dif z\\
 & =\EE_{\nu_{\theta}}[F(\mathbf{f})\partial_{x}(f_{+}-f_{-})(0)]
\end{align*}
$\nu_{\theta}$-a.s.~by the ergodic theorem. To be precise, we use
the spatial ergodicity of the spatial increments of the process $\mathbf{f}$,
which follows from the spatial ergodicity of Brownian motion, the
definition \cref{eq:nuthetadef} of $\nu_{\theta}$, the
shift-covariance proved in \cref{lem:Dshift}, and fact that all moments of $\partial_x(f_+-f_-)$ are finite (by \cref{eq:Rdef}ff.\ below, $\partial_x(f_+-f_-)(0)$ is a Gamma-distributed random variable) . Using these limits
in \cref{eq:dividebyL}, we see that
\[
\lim_{L\to\infty}\frac{1}{L}\int_{0}^{L}F(\pi_{\mathrm{Sh}}[\mathbf{f}_{\zeta}])\,\dif\zeta=\frac{1}{2\theta}\EE_{\nu_{\theta}}\left[F(\mathbf{f})\partial_{x}(f_{+}-f_{-})(0)]\right]\overset{\cref{eq:RNderiv}}=\EE_{\hat{\nu}_{\theta}}[F(\mathbf{f})],\qquad\text{\ensuremath{\nu_{\theta}}-a.s.,}
\]
and then the bounded convergence theorem implies \cref{eq:nuhatexpectation}. 
\end{proof}
The following lemma is immediate from the definition, but we use it several times, so we state it here for convenience.
\begin{lem} \label{lem:shift_Sh}
    For $\theta > 0$ and $\mathbf f \in \mathcal X(\theta)$, then for any constant $c \in \RR$, $\pi_{\mathrm{Sh}}[\mathbf f] = \pi_{\mathrm{Sh}}[\mathbf f + (c,c)]$.
\end{lem}

Before proving Theorem \cref{thm:nuhat}, we prove one more intermediate lemma, which gives idempotence of the shift map under the KPZ equation evolution.
\begin{lem} \label{lem:idempotence}
     Let $\theta > 0$, and let $\zeta \in \RR$. Let $\mathbf f = (f_-,f_+)$ be random initial data independent of the noise such that $\mathbf f \in \mathcal X(\theta)$ almost surely. Let $\mathbf h$ denote the solution to the KPZ equation with $\mathbf h(0,x) = \mathbf f$, and let $\mathbf h_{\mathrm{Sh}}$ denote the solution to the KPZ equation with $\mathbf h_{\mathrm{Sh}}(0,x) = \pi_{\mathrm{Sh}}[\mathbf f]$. Then, for all $t > 0$,
     \[
     \Law(\pi_{\mathrm{Sh}}[\mathbf h(t,\cdot)]) = \Law(\pi_{\mathrm{Sh}}[\mathbf h_{\mathrm{Sh}}(t,\cdot)]).
     \]
\end{lem}
\begin{proof}
For $t \ge 0$, let $b_t = \mathfrak b[\mathbf h(t,\cdot)]$. Then we have
\begin{equation} \label{eq:h_shift0}
\begin{aligned}
&\mathbf h_{\mathrm{Sh}}(t,\cdot)   \\
&= \Biggl(\log \int_\RR e^{f_-(y + b_0) - f_-(b_0)}Z(t,\cdot\viiva s,y)\,dy,\log \int_\RR e^{f_+(y + b_0) - f_+(b_0)}Z(t,\cdot\viiva s,y)\,dy\Biggr) \\
&= \Biggl(\log \int_\RR e^{f_-(y)}Z(t,\cdot\viiva s,y - b_0)\,dy,\log \int_\RR e^{f_+(y)}Z(t,\cdot\viiva s,y - b_0)\,dy\Biggr) - (f_-(b_0),f_+(b_0)) \\\overset{\mathrm{law}}&{=}\Biggl(\log \int_\RR e^{f_-(y)}Z(t,\cdot + b_0\viiva s,y)\,dy,\log \int_\RR e^{f_+(y)}Z(t,\cdot + b_0\viiva s,y)\,dy\Biggr) - (f_-(b_0),f_+(b_0)) \\
&= \mathbf h(t,\cdot + b_0) - (f_-(b_0),f_+(b_0)), 
\end{aligned}
\end{equation}
where the distributional equality follows by the shift-invariance in \cref{eq:Z_shift}.

By definition of the operator $\mathfrak b$, we see immediately that $\mathfrak b[\mathbf h(t,\cdot + b_0)] = b_t - b_0$, which implies
\begin{equation} \label{eq:h_shift1}
\pi_{\mathrm{Sh}}[\mathbf h(t,\cdot + b_0)] = \mathbf h(\cdot + b_t - b_0 + b_0) - \mathbf h(b_t - b_0 + b_0) = \pi_{\mathrm{Sh}}[\mathbf h(t,\cdot)].
\end{equation}
Now, by definition of $b_0$, we have $f_0(b_0) = f_+(b_0)$, so from \eqref{eq:h_shift0}, \eqref{eq:h_shift1}, and Lemma \ref{lem:shift_Sh}, we obtain
\[
\pi_{\mathrm{Sh}}[\mathbf h_{\mathrm{Sh}}(t,\cdot)]\overset{\mathrm{law}}{=} \pi_{\mathrm{Sh}}[\mathbf h(t,\cdot + b_0) - (f_-(b_0),f_+(b_0))] = \pi_{\mathrm{Sh}}[\mathbf h(t,\cdot)]. \qedhere
\]
\end{proof}
Now, we can prove \cref{thm:nuhat}.
\begin{proof}[Proof of \cref{thm:nuhat}.]
Let $F\in\mathcal{C}_{\mathrm{b}}(\mathcal{C}_{\mathrm{KPZ};0}^{2})$.
Let $\EE$ and $\hat{\EE}$ denote expectation under
which $\mathbf{h}(0,\cdot)$ is distributed according to $\nu_{\theta}$
and $\hat{\nu}_{\theta}$, respectively, in both cases independent
of the noise. Furthermore, let $\EE_{\zeta,\mathrm{Sh}}$ denote expectation under which $\mathbf h(0,x) = \pi_{\mathrm{Sh}}[(f_-(x),f_+(x) - \zeta)]$ where $(f_-,f_+) \sim \nu_\theta$, and let $\EE_\zeta$ denote expectation under which $\mathbf h(0,x) = (f_-(x),f_+(x) - \zeta)$ where $(f_-,f_+) \sim \nu_\theta$, both independent of the noise. We seek to prove that, for any $t>0$, we have
\[
\hat{\EE}[F(\pi_{\mathrm{Sh}}[\mathbf{h}(t,\cdot)])] = \hat{\EE}[F(\mathbf{h}(0,\cdot))].
\]
We can compute
\begin{align*}
\hat{\EE} & [F(\pi_{\mathrm{Sh}}[\mathbf{h}(t,\cdot)])]=\lim_{L\to\infty}\fint_{0}^{L}\EE_{\zeta,\mathrm{Sh}}[F(\pi_{\mathrm{Sh}}[\mathbf{h}(t,\cdot)])]\,\dif\zeta\\
& = \lim_{L\to\infty}\fint_{0}^{L}\EE_\zeta[F(\pi_{\mathrm{Sh}}[\mathbf{h}(t,\cdot)]\,\dif\zeta \\& =\lim_{L\to\infty}\fint_{0}^{L}\EE[F(\pi_{\mathrm{Sh}}[h_{-}(t,\cdot),h_{+}(t,\cdot)-\zeta])]\,\dif\zeta\\ 
 & =\lim_{L\to\infty}\EE\left[\fint_{0}^{L}F(\pi_{\mathrm{Sh}}[\pi_{0}[\mathbf{h}(t,\cdot)]+(0,h_{+}(t,0)-h_{-}(t,0)-\zeta)])\,\dif\zeta\right]\\
 & =\lim_{L\to\infty}\EE\left[\fint_{-(h_{+}(t,0)-h_{-}(t,0))}^{L-(h_{+}(t,0)-h_{-}(t,0))}F(\pi_{\mathrm{Sh}}[\pi_{0}[\mathbf{h}(t,\cdot)]+(0,-\zeta)])\,\dif\zeta\right]\\
 & =\lim_{L\to\infty}\fint_{0}^{L}\EE\left[F(\pi_{\mathrm{Sh}}[\pi_{0}[\mathbf{h}(t,\cdot)]+(0,-\zeta)])\right]\,\dif\zeta\\
 & =\lim_{L\to\infty}\fint_{0}^{L}\EE\left[F(\pi_{\mathrm{Sh}}[\mathbf{h}(0,\cdot)+(0,-\zeta)])\right]\,\dif\zeta\\
 & =\hat{\EE}[F(\mathbf{h}(0,\cdot))].
\end{align*}
The first identity is by \cref{lem:nuhatexpectation}, the second is by the idempotence in Lemma \ref{lem:idempotence} (applied to initial data $(f_-(x),f_+(x) - \zeta)$), the third is by the fact that the KPZ equation commutes with height shifts of the initial data, the fourth
is by the definition of $\pi_0$ and Lemma \ref{lem:shift_Sh}, the fifth is by a change of variables,
the sixth is by the ergodic theorem, the seventh is by \cref{prop:mu-invariant},
and the last is by \cref{lem:nuhatexpectation} again.
\end{proof}

\section{Fluctuations of differences of KPZ solutions at the origin}\label{sec:zerovalflucts}

The results of \cref{subsec:stationarity-properties} described
the (stationary) fluctuations of $\pi_{0}[\mathbf{h}(t,\cdot)]=\mathbf{h}(t,\cdot)-\mathbf{h}(t,0)$
for $\mathbf{h}$ a vector of solutions to \cref{eq:KPZ}. Not
captured in these results is the behavior of $\mathbf{h}(t,0)$, as
this is exactly what is forgotten by $\pi_{0}$. In this section,
we consider these results both in the setting of stationary initial
data and of flat initial data.

\subsection{Stationary case (static reference frame)}

In this section we prove \cref{thm:zeroval-stationarity}.
\begin{proof}[Proof of \cref{thm:zeroval-stationarity}]
The proof proceeds in two steps. First, we will show that
\begin{equation}
t^{-1/2}[h_{+}(t,-\theta t)-h_{-}(t,\theta t)]\to0\qquad\text{in probability}.\label{eq:noisedoesntmatter}
\end{equation}
Then, we will argue that, as $t\to\infty$,
\begin{equation}
t^{-1/2}\left[h_{+}(t,-\theta t)-h_{+}(t,0)-\left(h_{-}(t,\theta t)-h_{-}(t,0)\right)\right]\Longrightarrow\mathcal{N}(0,2\theta).\label{eq:icmatters}
\end{equation}
Of course, \cref{eq:zeroval-stationary} follows immediately
from these two convergences.

Using the shear invariance \cref{eq:h_shear}, we know that
\begin{equation}
h_{+}(t,-\theta t)\overset{\mathrm{law}}{=}h_{0}(t,0)-\frac{\theta^{2}}{2}t\overset{\mathrm{law}}{=}h_{-}(t,\theta t),\label{eq:applyshearinvariance}
\end{equation}
where $h_{0}$ solves \cref{eq:KPZ} started from a two-sided
Brownian motion with zero drift. From this we conclude that 
\[
\EE[h_{+}(t,-\theta t)-h_{-}(t,\theta t)]=0.
\]
Moreover, it was shown in \cite[Theorem 1.3]{Balazs-Quastel-Seppalainen-2011}
that the variance of $h_{0}(t,0)$ is bounded by $Ct^{2/3}$ for a
constant $C<\infty$, and thus \cref{eq:applyshearinvariance}
implies that 
\[
\Var\left(h_{+}(t,-\theta t)-h_{-}(t,\theta t)\right)\le Ct^{2/3}
\]
for a new constant $C<\infty$. The limit \cref{eq:noisedoesntmatter}
then follows from Chebyshev's inequality.

Next, using the joint stationarity established in \cref{prop:joint-stationarity}
and recalling \cref{eq:DBs,eq:Sdef}, we see
that
\begin{align*}
&\left(h_{+}(t,-\theta t)-h_{+}(t,0),h_{-}(t,\theta t)-h_{-}(t,0)\right)\\&\qquad\qquad\overset{\mathrm{law}}{=}\left(B_{2}(-\theta t)-\theta^{2}t+\mathcal{S}_{\theta}(-\theta t)-\mathcal{S}_{\theta}(0),B_{1}(\theta t)-\theta^{2}t\right),
\end{align*}
and hence that 
\[
h_{+}(t,-\theta t)-h_{+}(t,0)-\left(h_{-}(t,\theta t)-h_{-}(t,0)\right)\overset{\mathrm{law}}{=}B_{2}(-\theta t)-B_{1}(\theta t)+\mathcal{S}_{\theta}(-\theta t)-\mathcal{S}_{\theta}(0).
\]
Since $\mathcal{S}_{\theta}$ is a stationary process, we see that
$t^{-1/2}[\mathcal{S}_{\theta}(\theta t)-\mathcal{S}_{\theta}(0)]$
converges to $0$ in distribution as $t\to\infty$. Then \cref{eq:icmatters}
follows from the scaling properties of Brownian motion.
\end{proof}

\subsection{Stationary case (shock reference frame)}\label{subsec:zeroflucts-shock-reference-frame}

In this section, we consider the case of initial data distributed
according to $\hat{\nu}_{\theta}$ and prove \cref{eq:shockreferenceframe-zerodiff}
of \cref{thm:btflucts}.

Although the statement of \cref{thm:btflucts}(\ref{enu:tiltedic})
is in terms of the tilted measure $\hat{\nu}_{\theta}$, we will work
with the tilt explicitly; see \cref{eq:goal-tilt} below. Therefore,
in this section we will consider $(h_{-},h_{+})(0,\cdot)\sim\nu_{\theta}$.
There are two processes $B_{1}$ and $B_{2}$, which, under $\EE$,
are standard independent two-sided Brownian motions, such that $(h_{+},h_{-})(0,\cdot)=(f_{-},f_{+})$,
with $(f_{-},f_{+})$ as in \zcref[range]{eq:f-,eq:f+}.
In particular, we have as in \cref{eq:deriv-intro} that
\begin{equation}
\frac{1}{2\theta}\partial_{x}(h_{+}-h_{-})(0,0)=\frac{1}{2\theta}\left(\int_{-\infty}^{0}\exp\left\{ B_{2}(y)-B_{1}(y)+2\theta y\right\} \,\dif y\right)^{-1}\eqqcolon R.\label{eq:Rdef}
\end{equation}
From the expression \cref{eq:Rdef}, we see that $2\theta R$
is a Gamma distributed random variable with shape $2\theta$ and rate
$1$ (see \cite{Dufresne-1990}, \cite[p. 452]{RY99}, or \cite[Theorem 6.2]{MY05}). To prove
\cref{eq:shockreferenceframe-zerodiff}, in light of \cref{eq:RNderiv} it suffices to show
that, for any $F\in\mathcal{C}_{\mathrm{b}}(\RR)$, we have
\begin{equation}
\lim_{t\to\infty}\EE\left[F\left(t^{-1/2}(h_{+}-h_{-})(t,0)\right)R\right]=\EE[F(Z)],\label{eq:goal-tilt}
\end{equation}
where $Z\sim\mathcal{N}(0,2\theta)$. If $R$ and $t^{-1/2}(h_{+}-h_{-})(t,0)$
were independent, then \cref{eq:goal-tilt} would simply be a
consequence of \cref{thm:zeroval-stationarity}. For finite $t$, the random variables
$R$ and $t^{-1/2}(h_{+}-h_{-})(t,0)$ are not independent, but the
main idea of the argument is to show that they decouple as $t\to\infty$.

Fix $\eta\in(0,\theta\wedge1)$ and $\alpha\in(0,1)$. By definition,
we have
\[
h_{\pm}(t,0)=\log\int_{\RR}Z(t,0\viiva0,y)\e^{h_{\pm}(0,y)}\,\dif y.
\]
We make the following definitions:
\begin{align}
\tilde{h}_{-}(t,0) & \coloneqq\log\int_{(-\theta-\eta)t}^{(-\theta+\eta)t}Z(t,0)\viiva0,y)\e^{B_{1}(y)-B_{1}(-t^{\alpha})+\theta y}\,\dif y;\label{eq:hminustildedef}\\
\tilde{h}_{+}(t,0) & \coloneqq\log\int_{(\theta-\eta)t}^{(\theta+\eta)t}Z(t,0\viiva0,y)\e^{B_{2}(y)+\theta y}\,\dif y;\label{eq:hplustildedef}\\
\tilde{R}_{t} & \coloneqq\left(\int_{-t^{\alpha}}^{0}\e^{B_{2}(y)-B_{1}(y)+2\theta y}\,\dif y\right)^{-1}.\label{eq:Rtildedef}
\end{align}
We note that, for sufficiently large $t$, we have $-t^{\alpha}>(-\theta+\eta)t$,
so by the independence of Brownian increments, \begin{equation}(\tilde{h}_{-}(t,0),\tilde{h}_{+}(t,0))\text{ 
is independent of }\tilde{R}_{t}.\label{eq:independence}\end{equation} 

We will show below that 
\begin{equation}
t^{-1/2}[h_{+}(t,0)-h_{-}(t,0)]-t^{-1/2}[\tilde{h}_{+}(t,0)-\tilde{h}_{-}(t,0)]\to0\label{eq:approxisgood}
\end{equation}
in probability as $t\to\infty$. First we show how this implies \cref{eq:goal-tilt}.
\begin{proof}[Proof of \cref{eq:goal-tilt} given \cref{eq:approxisgood}]
By \cref{eq:shockreferenceframe-zerodiff}, for $\iota\in\{0,1\}$,
we have
\begin{equation}
\EE\left[F(t^{-1/2}[h_{+}(t,0)-h_{-}(t,0)])R^{\iota}-F(t^{-1/2}[\tilde{h}_{+}(t,0)-\tilde{h}_{-}(t,0)])\tilde{R}_{t}^{\iota}\right]\to0\qquad\text{as }t\to\infty.\label{eq:convofEF}
\end{equation}
Here we have used the continuity and boundedness of $F$ and the fact
that $(\tilde{R}_{t})_{t\ge1}$ is uniformly integrable, which follows
from the facts that $R_{t}$ is positive and decreasing in $t$ and
$\EE[R_{1}]<\infty$ (see \cref{lem:R1moments}). But
we have by the independence \cref{eq:independence} that
\begin{align*}
\EE\left[F(t^{-1/2}[\tilde{h}_{+}(t,0)-\tilde{h}_{-}(t,0)])\tilde{R}_{t}\right] & =\EE\left[F(t^{-1/2}[\tilde{h}_{+}(t,0)-\tilde{h}_{-}(t,0)])\right]\EE\left[\tilde{R}_{t}\right].
\end{align*}
Now using \cref{eq:convofEF} with $F\equiv1$ and $\iota=1$,
we get $\EE\left[\tilde{R}_{t}\right]\to\EE\left[R\right]=1$,
and using \cref{eq:convofEF} with $\iota=0$, we get
\[
\EE\left[F(t^{-1/2}[\tilde{h}_{+}(t,0)-\tilde{h}_{-}(t,0)])\right]-\EE\left[F(t^{-1/2}[h_{+}(t,0)-h_{-}(t,0)])\right]\to0\qquad\text{as }t\to\infty.
\]
But we also know by \cref{thm:zeroval-stationarity} that
\[
\EE\left[F(t^{-1/2}[h_{+}(t,0)-h_{-}(t,0)])\right]\to\EE[F(Z)]\qquad\text{as }t\to\infty,
\]
where $Z\sim\mathcal{N}(0,2\theta)$. Combining all of these limits,
we conclude that \cref{eq:goal-tilt} holds.
\end{proof}
Now we prove \cref{eq:approxisgood}.
\begin{proof}[Proof of \cref{eq:approxisgood}.]
We recall that
\begin{align*}
h_{-}(0,x) & =B_{1}(x)-\theta x,\\
h_{+}(0,x) & =B_{2}(x)+\theta x+\mathcal{S}_{\theta}(x)-\mathcal{S}_{\theta}(0),\\
\mathcal{S}_{\theta}(x) & =\log\int_{-\infty}^{x}\e^{B_{2}(y)-B_{2}(x)-(B_{1}(y)-B_{1}(x))+2\theta(y-x)}\,\dif y.
\end{align*}
Using these facts along with the definitions \zcref[range]{eq:hminustildedef,eq:hplustildedef}, we see that
\[
\tilde{h}_{-}(t,0)=-B_{1}(-t^{-\alpha})+\log\int_{(-\theta-\eta)t}^{(-\theta+\eta)t}Z(t,0\viiva0,y)\e^{h_{-}(0,y)}\,\dif y
\]
and
\[
\tilde{h}_{+}(t,0)=\log\int_{(\theta-\eta)t}^{(\theta+\eta)t}Z(t,0\viiva0,y)\e^{h_{+}(0,y)-\mathcal{S}_{\theta}(y)+\mathcal{S}_{\theta}(0)}\,\dif y.
\]
Now since $\alpha\in(0,1)$, we have 
\begin{equation}
t^{-1/2}B_{1}(-t^{\alpha})\to0\qquad\text{as }t\to\infty\text{ in probability.}\label{eq:talphatermvanishes}
\end{equation}
We will show that
\begin{equation}
\lim_{t\to\infty}\frac{\int_{(-\theta-\eta)t}^{(-\theta+\eta)t}Z(t,0\viiva0,y)\e^{h_{-}(0,y)}\,\dif y}{\int_{\RR}Z(t,0\viiva0,y)\e^{h_{-}(0,y)}\,\dif y}=\lim_{t\to\infty}\frac{\int_{(\theta-\eta)t}^{(\theta+\eta)t}Z(t,0\viiva0,y)\e^{h_{+}(0,y)}\,\dif y}{\int_{\RR}Z(t,0\viiva0,y)\e^{h_{+}(0,y)}\,\dif y}=1\qquad\text{a.s.}\label{eq:Zsonlyneedmainregion}
\end{equation}
and that
\begin{equation}
\lim_{t\to\infty}t^{-1/2}\sup_{y\in[(\theta-\eta)t,(\theta+\eta)t]}|\mathcal{S}_{\theta}(y)|=0\qquad\text{in probability.}\label{eq:Sgoesaway}
\end{equation}
 Then \cref{eq:talphatermvanishes,eq:Zsonlyneedmainregion}
will imply that 
\[
\lim_{t\to\infty}t^{-1/2}[h_{-}(t,0)-\tilde{h}_{-}(t,0)]=0\qquad\text{in probability,}
\]
and \cref{eq:Zsonlyneedmainregion,eq:Sgoesaway}
imply that 
\[
\lim_{t\to\infty}t^{-1/2}[h_{+}(t,0)-\tilde{h}_{+}(t,0)]=0\qquad\text{in probability,}
\]
so once we prove \cref{eq:Zsonlyneedmainregion,eq:Sgoesaway}
then we can conclude \cref{eq:approxisgood} and complete the
proof.

We first turn to the proof of \cref{eq:Zsonlyneedmainregion}.
We prove the second limit, the first being analogous. Choose $\delta\in(0,\eta)$
small enough that 
\begin{equation}
0<2\sqrt{(2\theta+1)\delta-\delta^{2}/2}<\eta.\label{eq:deltaassumption}
\end{equation}
Since $h_{+}(0,\cdot)$ is a Brownian motion with drift $\theta$,
there is a random constant $C_{\delta}\in(0,\infty)$ such that
\[
(\theta-\delta)x-C_{\delta}\le h_{+}(0,x)\le(\theta+\delta)x+C_{\delta}\qquad\text{for all }x\ge0
\]
and
\[
(\theta+\delta)x-C_{\delta}\le h_{+}(0,x)\le(\theta-\delta)x+C_{\delta}\qquad\text{for all }x\le0.
\]
Fix $\eps>0$. Because we chose $\delta\in(0,\eta)$, by \cref{lem:shapefn}
and the last two displays, we may choose a random $T$ sufficiently
large that for all $t\ge T$, we have
\begin{align*}
\int_{\RR}Z(t,0\viiva0,y)\e^{h_{+}(0,y)}\,\dif y & \ge\exp\left\{ \left((\theta-\delta)^{2}/2-\frac{1}{24}-\eps\right)t-C_{\delta}\right\} ,\\
\int_{(\theta+\eta)t}^{\infty}Z(t,0\viiva0,y)\e^{h_{+}(0,y)}\,\dif y & \le\exp\left\{ \left((\theta+\delta)(\theta+\eta)-\frac{(\theta+\eta)^{2}}{2}-\frac{1}{24}+\eps\right)t+C_{\delta}\right\} ,\quad\text{and}\\
\int_{-\infty}^{(\theta-\eta)t}Z(t,0\viiva0,y)\e^{h_{+}(0,y)}\,\dif y & \le\exp\left\{ \left((\theta+\delta)(\theta-\eta)-\frac{(\theta-\eta)^{2}}{2}-\frac{1}{24}+\eps\right)t+C_{\delta}\right\} .
\end{align*}
It can quickly be checked by expanding that the right side on the
third line is less than the right side on the second line, which is
equal to
\[
\exp\left\{ \left(\frac{\theta^{2}}{2}+(\theta+\eta)\delta-\frac{\eta^{2}}{2}-\frac{1}{24}+\eps\right)t+C_{\delta}\right\} .
\]
Therefore, for $t\ge T$, we have
\begin{equation}
\frac{\int_{\RR\setminus[(\theta-\eta)t,(\theta+\eta)t]}Z(t,0)\viiva0,y)\e^{h_{+}(0,y)}\,\dif y}{\int_{\RR}Z(t,0\viiva0,y)\e^{h_{+}(0,y)}\,\dif y}\le2\exp\left\{ \left((2\theta+\eta)\delta-\frac{\eta^{2}}{2}-\frac{\delta^{2}}{2}+2\eps\right)t+2C_{\delta}\right\} .\label{eq:rat_bd}
\end{equation}
By the assumptions \cref{eq:deltaassumption} and that $\eta<1$,
we have
\[
(2\theta+\eta)\delta-\frac{\eta^{2}}{2}-\frac{\delta^{2}}{2}+2\eps<(2\theta+1)\delta-\frac{\eta^{2}}{2}-\frac{\delta^{2}}{2}+2\eps<0,
\]
as long as $\eps$ is chosen sufficiently small. Hence, for such small $\eps$, the right side of \cref{eq:rat_bd} goes to
$0$ as $t\to\infty$. This completes the proof of \cref{eq:Zsonlyneedmainregion}.

It remains to prove \cref{eq:Sgoesaway}. We start by writing
\[
\mathcal{S}_{\theta}(x)=B_{1}(x)-B_{2}(x)-2\theta x+\mathcal{Q}_{\theta}(x),
\]
with
\[
\mathcal{Q}_{\theta}(x)\coloneqq\log\int_{-\infty}^{x}\e^{B_{2}(y)-B_{1}(y)+2\theta y}\,\dif y.
\]
Now Morrey's inequality gives us, for any $p\in(1,\infty)$, a constant
$C_{p}<\infty$ such that
\[
\EE\left[\sup_{0\le y\le1}|\mathcal{Q}_{\theta}(y)|^{p}\right]\le C_{p}\left(\int_{0}^{1}\EE|\mathcal{Q}_{\theta}(y)|^{p}\,\dif y+\int_{0}^{1}\EE|\mathcal{Q}_{\theta}'(y)|^{p}\,\dif y\right),
\]
and it is easy to calculate that the right side is finite for any
$p<\infty$. Since the maximum of Brownian motion on the unit interval
also has all moments, we conclude that there is a constant $C<\infty$
such that
\[
\EE\left[\sup_{0\le y\le1}|\mathcal{S}_{\theta}(y)|^{p}\right]<C.
\]
Using the spatial stationarity of $\mathcal{S}_{\theta}$, we therefore
have
\[
\EE\left[\left(t^{-1/2}\sup_{y\in[(\theta-\eta)t,(\theta+\eta)t]}|\mathcal{S}_{\theta}(y)|\right)^{p}\right]\le Ct^{-p/2}(2\eta t+1).
\]
Choosing $p>2$, we conclude \cref{eq:Sgoesaway} by Markov's
inequality.
\end{proof}

\subsection{Flat case}

In this section, we consider the case of flat initial data and prove
\cref{thm:zeroval-flat}. The proof proceeds through several
lemmas. We make use of the following celebrated convergence of the
KPZ equation to the KPZ fixed point. To avoid unnecessary technical
details, we state the result we will use only for flat initial data,
noting that convergence is also known to hold for much more general
initial data.
\begin{prop}[\cite{KPZfixed,Wu-23}]
\label{prop:KPZ_eq_conv}Let $h$ solve \cref{eq:KPZ} started
from $h(0,\cdot)\equiv0$. Then, as $T\to\infty$, the process
\[
\left\{ 2^{1/3}T^{-1/3}\left[h(Tt,2^{1/3}T^{2/3}x)+\frac{Tt}{24}\right]\right\} _{(t,x)\in(0,\infty)\times\RR}
\]
converges in law in the topology of uniform convergence on compact
subsets of $(0,\infty)\times\RR$ to a continuous-time Markov process,
the KPZ fixed point $\{\mathfrak{h}(t,x)\}_{(t,x)\in(0,\infty)\times\RR}$
started from zero initial data. The process $\mathfrak{h}$ has continuous
sample paths.
\end{prop}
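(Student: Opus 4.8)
This is a celebrated result, and the honest plan is to establish it by citing \cite{KPZ_equation_convergence} together with \cite{heat_and_landscape}; the construction of the limit $\mathfrak{h}$, its Markov property, and the continuity of its sample paths come from \cite{KPZfixed} (see also \cite{Wu-23}). Below is the shape of the argument that underlies those references, which is the ``proof'' I would write out if forced to reprove it.

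Since $h(0,\cdot)\equiv 0$, the Cole--Hopf solution satisfies $\e^{h(Tt,x)}=\int_{\RR}Z(Tt,x\viiva 0,y)\,\dif y$, so everything reduces to the large-$T$ behavior, under the $1:2:3$ scaling with the constants appearing in the statement, of the fundamental solution $Z$ of the stochastic heat equation integrated against the flat profile. The first ingredient I would use is the convergence of the rescaled point-to-point free energy to the directed landscape: after recentering by $+Tt/24$, subtracting the deterministic parabola, and multiplying by $2^{1/3}T^{-1/3}$, the field $(y,x)\mapsto \log Z(Tt,2^{1/3}T^{2/3}x\viiva 0,2^{1/3}T^{2/3}y)$ converges, locally uniformly, to the directed landscape $\mathcal L(y,0;x,t)$; this is exactly the content of the main theorem of \cite{heat_and_landscape}, building on the process-level statement in \cite{KPZ_equation_convergence}.

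The second step is a Laplace-type asymptotic for $\int_{\RR}Z(Tt,2^{1/3}T^{2/3}x\viiva 0,2^{1/3}T^{2/3}y)\,\dif y$: writing the integrand as $\exp\{(2^{1/3}T^{-1/3})^{-1}g_T(y)\}$ with $g_T\to\mathcal L(\cdot,0;x,t)-(\text{parabola})$, the exponential scale is governed by the supremum of the limit, which yields
\[
2^{1/3}T^{-1/3}\bigl[h(Tt,2^{1/3}T^{2/3}x)+\tfrac{Tt}{24}\bigr]\Longrightarrow \sup_{y\in\RR}\mathcal L(y,0;x,t)=\mathfrak h(t,x),
\]
the last equality being the definition of the KPZ fixed point started from the identically-zero (``flat'') profile. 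To make this rigorous I would need a lower bound (the integral is comparable to its integrand near the maximizer, from short-range regularity of $Z$) together with a matching upper bound that discards the contribution of $|y|$ large (from one-sided tail/moderate-deviation bounds for $Z$, e.g.\ the localization estimates of \cite{Das-Zhu-22b} or moment bounds for the stochastic heat equation). The hard part---and the reason this is a genuine theorem rather than a soft consequence of one-point asymptotics---is that all of these estimates must be made uniform for $(t,x)$ in compact subsets of $(0,\infty)\times\RR$, so that the convergence holds in the topology of uniform convergence on compacts; supplying that uniformity is precisely what \cite{KPZ_equation_convergence,heat_and_landscape} do. An alternative route, the one actually taken in \cite{KPZ_equation_convergence}, is to pass through ASEP, which converges to the KPZ equation under weak asymmetry \cite{Bertini-Giacomin-1997} and to the KPZ fixed point under $1:2:3$ scaling, and then to interchange the two limits by a joint two-parameter scaling argument.
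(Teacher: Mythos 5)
Your proposal takes exactly the paper's approach: the paper does not prove this proposition but attributes it directly to the cited references, noting after the statement that the convergence is \cite[Theorem 2.2(3)]{KPZ_equation_convergence} (proved independently in \cite{heat_and_landscape}), the fixed point itself is constructed in \cite{KPZfixed}, and continuity of sample paths is \cite[Theorem 4.13]{KPZfixed}. Your added sketch of the directed-landscape/Laplace-method mechanism behind those references is accurate in outline and consistent with the paper's citations, but it is strictly extra: the paper's own ``proof'' is the short attribution paragraph.
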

To be specific about the references for the proposition above, the KPZ fixed point $\mathfrak{h}$ was constructed in \cite{KPZfixed}. The convergence stated in \cref{prop:KPZ_eq_conv} was proved in \cite[Theorem~1.8]{Wu-23};
see also \cite{heat_and_landscape}. %
The spatial continuity of the KPZ fixed point was shown in \cite[Theorem 4.13]{KPZfixed}.

We now use \cref{prop:KPZ_eq_conv} and shear-invariance to state
the following.
\begin{lem}
\label{lem:TW_conv}For $\theta\in\RR$, suppose that $h$ solves
\cref{eq:KPZ} with initial data $h(0,x)=\theta x$. Then we
have the distributional convergence
\begin{equation}
\frac{h(t,0)+\left(\frac{1}{24}-\frac{\theta^{2}}{2}\right)t}{t^{1/3}}\Longrightarrow\frac{X}{2},\label{eq:TWconvergence}
\end{equation}
where $X$ is a Tracy--Widom GOE random variable.
\end{lem}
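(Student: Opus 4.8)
The plan is to reduce to the flat case $\theta=0$ and then apply \cref{prop:KPZ_eq_conv}. Let $h_{0}$ denote the solution of \cref{eq:KPZ} with $h_{0}(0,\cdot)\equiv0$. Writing $\theta x=h_{0}(0,x)+\theta x$ and applying the shear identity \cref{eq:h_shear}, one obtains $\{h(t,x-\theta t)\}_{(t,x)}\overset{\mathrm{law}}{=}\{h_{0}(t,x)+\theta x-\tfrac{\theta^{2}}{2}t\}_{(t,x)}$ as space-time processes; evaluating at $x=\theta t$ gives $h(t,0)\overset{\mathrm{law}}{=}h_{0}(t,\theta t)+\tfrac{\theta^{2}}{2}t$ for each fixed $t$. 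Moreover, since $h_{0}$ has deterministic, spatially constant initial data and the noise is spatially stationary, the Shift invariance in \cref{eq:Z_shift} shows that the whole field $\{h_{0}(t,x)\}_{(t,x)}$ is invariant under spatial translations, so $h_{0}(t,\theta t)\overset{\mathrm{law}}{=}h_{0}(t,0)$. Combining these two facts, $h(t,0)+\left(\tfrac{1}{24}-\tfrac{\theta^{2}}{2}\right)t\overset{\mathrm{law}}{=}h_{0}(t,0)+\tfrac{t}{24}$, so it suffices to prove \cref{eq:TWconvergence} in the case $\theta=0$.

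For the flat case, I would apply \cref{prop:KPZ_eq_conv} at the single space-time point $(t,x)=(1,0)$ with $T=t\to\infty$. Since evaluation at a fixed point is continuous for the topology of locally uniform convergence, the stated convergence yields $2^{1/3}t^{-1/3}[h_{0}(t,0)+\tfrac{t}{24}]\Longrightarrow\mathfrak{h}(1,0)$, where $\mathfrak{h}$ is the KPZ fixed point started from zero initial data, and hence $t^{-1/3}[h_{0}(t,0)+\tfrac{t}{24}]\Longrightarrow2^{-1/3}\mathfrak{h}(1,0)$. The proof is then completed by invoking the known one-point distribution of the flat KPZ fixed point: $\mathfrak{h}(1,\cdot)$ is a rescaled stationary Airy$_{1}$ process with $\mathcal{A}_{1}(0)\overset{\mathrm{law}}{=}\tfrac{1}{2}X$, and tracking the scaling factors gives $\mathfrak{h}(1,0)\overset{\mathrm{law}}{=}2^{-2/3}X$, so that $2^{-1/3}\mathfrak{h}(1,0)\overset{\mathrm{law}}{=}X/2$, which is \cref{eq:TWconvergence}.

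The conceptual ingredients --- shear invariance, spatial stationarity of the flat solution, and convergence to the KPZ fixed point --- are all immediate from results available above, so the argument is short. The one point requiring care is the bookkeeping of numerical constants: one must correctly track the $2^{1/3}$ appearing in the scaling of \cref{prop:KPZ_eq_conv} against the factors relating $\mathfrak{h}(1,0)$ to the stationary Airy$_{1}$ marginal and to the chosen normalization of the Tracy--Widom GOE law $X$. I expect this constant-matching to be the only genuine obstacle.
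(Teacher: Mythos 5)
Your argument is correct and follows essentially the same route as the paper's proof: shear invariance \cref{eq:h_shear} plus shift invariance \cref{eq:Z_shift} reduce general $\theta$ to $\theta=0$, and then \cref{prop:KPZ_eq_conv} combined with the Airy$_1$ marginal of the flat KPZ fixed point (via \cite[(4.15)]{KPZfixed}) gives \cref{eq:TWconvergence}. The only differences are cosmetic — you perform the reduction to $\theta=0$ first rather than last, and you summarize the shift-invariance computation rather than writing it out — and your constant bookkeeping matches the paper's.
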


\begin{proof}
For $\theta=0$, \cref{prop:KPZ_eq_conv} implies the convergence
as $t\to\infty$ of the rescaled process
\[
x\mapsto t^{-1/3}\left[h(t,2^{1/3}t^{2/3}x)+\frac{t}{24}\right]
\]
to the KPZ fixed point with initial data at time $1$, denoted $x\mapsto2^{-1/3}\mathfrak{h}(1,x)$,
in the sense of uniform convergence on compact sets. By \cite[(4.15)]{KPZfixed},
the process $x\mapsto2^{-1/3}\mathfrak{h}(1,x)$ has the law of $x\mapsto\mathcal{A}_{1}(2^{-2/3}x)$,
where $\mathcal{A}_{1}$ is the $\mathrm{Airy}_{1}$ process. And
it is known \cite{Ferrari-Spohn-2005,Sasamoto-2005} (see also \cite{ReflectedBM_book})
that $\mathcal{A}_{1}$ is a stationary process whose marginals are
distributed according to $1/2$ times the Tracy--Widom GOE distribution.
This implies the convergence \cref{eq:TWconvergence} in the
case $\theta=0$.

The case $\theta\ne0$ then follows from the shear-invariance of the
KPZ equation and the stationarity of the increments of $x\mapsto h(t,x)$
given the flat initial condition $\theta x$. To be precise, the shear
invariance stated in \cref{eq:h_shear} implies that $h(t,0)$
has the same distribution as $h_{0}(t,\theta t)+\frac{\theta^{2}}{2}t$,
where $h_{0}$ solves \cref{eq:KPZ} started from $0$ initial
data. Also, the shift invariance of $Z$ stated in \cref{eq:Z_shift}
implies that
\begin{equation} \label{eq:flat_shift}
\begin{aligned}
h_{0}(t,\theta t)&=\log\int_{\RR}Z(t,\theta t\viiva0,y)\,\dif y\\\overset{\mathrm{law}}&{=}\log\int_{\RR}Z(t,0\viiva0,y-\theta t)\,\dif y=\log\int_{\RR}Z(t,0\viiva0,y)\,\dif y=h_{0}(t,0).
\end{aligned}
\end{equation}
Therefore, we have $h(t,0)-\frac{\theta^{2}}{2}t\overset{\mathrm{law}}{=}h_{0}(t,0)$.
Using this identity in law, the convergence \cref{eq:TWconvergence}
in the general $\theta$ case follows from the $\theta=0$ case.
\end{proof}
\cref{lem:TW_conv} will be the ingredient yielding the Tracy--Widom
GOE random variables in claimed in \cref{thm:zeroval-flat}.
To complete the proof of \cref{thm:zeroval-flat}, we also need
to know that the Tracy--Widom GOE random variables coming from $h_{+}(t,0)$
and $h_{-}(t,0)$ are independent. That is the task of the rest of
this section. 

The idea of the proof of independence is that, due to the shear-invariance
\cref{eq:h_shear}, the contribution of the space-time white noise noise to $h_{+}(t,0)$ 
mostly comes from the right of the $t$-axis, while the contribution
to $h_{-}(t,0)$ mostly comes from the left of the $t$-axis (here we represent the time $t$ as the vertical coordinate). The
fact that the noises in these regions are independent yields the independence
of the limits.

To carry out this argument, we use the continuum directed random polymer
constructed in \cite{Alberts-Khanin-Quastel-2014a}, as well as estimates
on the behavior of this polymer proved in \cite{Das-Zhu-22b}. For
$t>0$ and $\theta,x\in\RR$, we let $Q_{\theta,t,x}$ denote the
random measure of the point-to-line polymer, with mean slope $-\theta$,
from $\{0\}\times\RR$ to $(t,x)$. If $Y\in\mathcal{C}([0,T])$ denotes
the random polymer path, this means that, for any $0\le t_{1}\le\cdots\le t_{n}\le t$
and $y_{1},\ldots,y_{n}\in\RR$, we have
\begin{equation}
Q_{\theta,t,x}(Y_{1}\in\dif x_{1},\ldots,Y_{n}\in\dif x_{n})=\frac{\int_{\RR}\e^{\theta x_{0}}\prod\limits_{j=0}^{n}Z(t_{j+1},x_{j+1}\viiva t_{j},x_{j})\,\dif x_{0}}{\int_{\RR}\e^{\theta y}Z(t,x\viiva0,y)\,\dif y}\dif x_{1}\cdots\dif x_{n},\label{eq:Qdef}
\end{equation}
where $t_{0}=0$, $t_{n+1}=t$, and $x_{n+1}=x$. 

Now define 
\[
A_{t,\pm}=\{Y\in\mathcal{C}([0,t])\st\pm Y(s)>0\text{ for all }s\in[0,t\}\}.
\]
Our first lemma says that if a polymer starts at distance $t^{1/2}$
to the right of the origin and has a positive drift, then it is unlikely
to ever cross to the left of the origin. Note that the typical annealed displacement
of the polymer is on the order $t^{2/3}$, so the positive drift is
really required for this statement to be true. The power $1/2$ is
rather arbitrary; the lemma holds with any power strictly greater
than $1/3$, but we want the initial displacement to be $o(t^{2/3})$
so that the value of $h$ is close to $h(t,0)$, as shown in \cref{eq:fixendpoint}
below.
\begin{lem}
\label{lem:QAlimit}If $\theta>0$ is fixed, then
\begin{equation}
\lim_{t\to\infty}Q_{\pm\theta,t,\pm t^{1/2}}(A_{t,\pm})=1\qquad\text{in probability.}\label{eq:QAlimit}
\end{equation}
\end{lem}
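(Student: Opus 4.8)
The plan is to reduce to a one-sided statement and then run a multi-scale argument controlling the polymer path against its characteristic. By the reflection invariance \cref{eq:Z_reflect} (applied with $x\mapsto-x$ and $y\mapsto-y$ in all space variables), the random variable $Q_{-\theta,t,-t^{1/2}}(A_{t,-})$ has the same law as $Q_{\theta,t,t^{1/2}}(A_{t,+})$, so it suffices to show $Q_{\theta,t,t^{1/2}}(A_{t,+})\to1$ in probability. Since the path $Y$ is continuous with $Y(t)=t^{1/2}>0$, we have $A_{t,+}^{\mathrm c}=\{\min_{s\in[0,t]}Y(s)\le0\}$, so it is enough to prove $Q_{\theta,t,t^{1/2}}(\min_{s\in[0,t]}Y(s)\le0)\to0$ in probability. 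The next point is to identify the ``characteristic'' of this polymer: because the free end is weighted by $\e^{\theta y}$ with $\theta>0$ while the other end is pinned at $(t,t^{1/2})$, the typical value of $Y(t-\ell)$ is $\approx\theta\ell+t^{1/2}$, up to fluctuations of order $\ell^{2/3}$; in particular the path is typically closest to the origin at its pinned right endpoint, where it equals $t^{1/2}\to\infty$.

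The core of the argument is a dyadic union bound over the scales $\ell=2^{j}$, $j=0,1,\dots,\lceil\log_2 t\rceil$. On the time interval $I_j=[(t-2^{j+1})\vee0,\,t-2^{j}]$, if $\min_{I_j}Y\le0$ then either $Y(t-2^{j})\le\tfrac12(\theta 2^{j}+t^{1/2})$, which is a macroscopic lower deviation of the path from its characteristic value on the natural scale $2^{2j/3}$, or else $Y$ falls from a level above $\tfrac12(\theta 2^{j}+t^{1/2})$ down to $0$ over a time window of length at most $2^{j}$. For the first possibility I would invoke the localization estimates for the continuum directed random polymer from \cite{Das-Zhu-22b}, which on an event of $\PP$-probability $1-o(1)$ give a bound of the shape
\[
Q_{\theta,t,t^{1/2}}\Bigl(Y(t-\ell)\le\tfrac12(\theta\ell+t^{1/2})\Bigr)\le\exp\Bigl(-c\bigl(\tfrac{\theta\ell+t^{1/2}}{\ell^{2/3}}\bigr)^{3/2}\Bigr),\qquad 1\le\ell\le t;
\]
for the second, the modulus-of-continuity estimates from the same reference bound the oscillation of $Y$ over a window of length $2^{j}$ by $C\,2^{2j/3}(\log\tfrac{2t}{2^{j}})^{C}$, which is smaller than $\tfrac12\theta 2^{j}$ once $2^{j}$ exceeds a polylogarithmic threshold. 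The short scales $2^{j}\le\operatorname{polylog}(t)$ (equivalently, the times within $\operatorname{polylog}(t)$ of the pinned endpoint) are handled in one stroke: there the pinned path moves by at most $\operatorname{polylog}(t)\ll t^{1/2}$, so it stays positive. Finally, an elementary optimization gives $\min_{1\le\ell\le t}(\theta\ell+t^{1/2})\ell^{-2/3}\ge c\,\theta^{2/3}t^{1/6}$, so every term in the union bound is at most $\exp(-c'\theta t^{1/4})$; summing the $O(\log t)$ terms still yields a bound tending to $0$, which completes the proof. (The denominator partition function appearing implicitly in these quenched estimates can be bounded below using \cref{lem:shapefn}.)

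I expect the main obstacle to be the second step: extracting from \cite{Das-Zhu-22b} forms of the localization and path-regularity estimates that hold simultaneously on a single high-probability event and uniformly over all the dyadic scales, and in particular getting the transition near the pinned endpoint right, where the deviation that must be realized is of order $t^{1/2}$ while the relevant time window is short. One way to lighten the modulus-of-continuity bookkeeping is to decompose instead at the first time $\tau_0=\inf\{s\in[0,t]:Y(s)=0\}$, using the Markov property of the quenched polymer measure at $\tau_0$: this automatically absorbs excursions of $Y$ below the origin, and the scale-by-scale estimate on $Q_{\theta,t,t^{1/2}}(\tau_0\in I_j)$ is again exactly the localization bound displayed above.
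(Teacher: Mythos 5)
Your proposal is in the same spirit as the paper's proof---both rely on localization estimates from \cite{Das-Zhu-22b} and both exploit the fact that under $Q_{\theta,t,t^{1/2}}$ the polymer drifts away from the origin as one moves back from the pinned endpoint---but the paper's route is considerably shorter and sidesteps exactly the bookkeeping you flag as the main obstacle. After the same reflection reduction to the $+$ case, the paper reverses time via the identity $Q_{\theta,t,x}(\dif Y)\overset{\mathrm{law}}{=}\overleftarrow{Q}_t(\dif(s\mapsto Y(t-s)-x+\theta s))$, so that the question becomes one about a driftless point-to-line polymer $X$ pinned at the origin, and the event to be ruled out is $\{|X(s)|>x+\theta s\text{ for some }s\}$. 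It then applies a \emph{single} estimate, \cite[Proposition 3.3-(point-to-line)]{Das-Zhu-22b}, which on a high-probability event controls $\sup_{s\in[0,t]}|X(s)|/(t^{1/6+\delta}s^{1/2-\delta})$ uniformly over $s$; the crucial feature is that this bound vanishes at the pinned endpoint $s=0$, so there is no need for a special treatment of the short scales. Young's inequality converts $m\,t^{1/6+\delta}s^{1/2-\delta}$ into a bound of the form $C(mt^{1/6+\delta})^{2/(1+2\delta)}+\theta s$, and with a judicious choice of $m$ and $\delta$ this becomes exactly $t^{1/2}+\theta s$, matching the available displacement $x+\theta s=t^{1/2}+\theta s$ in one shot. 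Your dyadic decomposition plus separate modulus-of-continuity estimates would almost certainly work, and the alternative you suggest (decomposing at the first hitting time $\tau_0$ and using the Markov property of the quenched polymer measure) is also a sensible simplification, but neither is needed: the uniform-in-$s$ form of the Das-Zhu sup bound replaces the entire multi-scale union bound with a single application of Young's inequality. Also, the concern about bounding the quenched partition function from below via \cref{lem:shapefn} does not arise, since the cited Das-Zhu proposition is already stated as a two-level (annealed-over-quenched) tail bound for the polymer measure itself.
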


\begin{proof}
We prove the $+$ case, as the $-$ case is symmetrical. We note that
\begin{equation}
Q_{\theta,t,x}(\dif Y)\overset{\mathrm{law}}{=}\overleftarrow{Q}_{t}(\dif(s\mapsto Y(t-s)-x+\theta s)),\label{eq:revtime}
\end{equation}
where $\overleftarrow{Q}_{t}$ is the random measure of a point-to-line
continuum directed random polymer from $(0,0)$ to $\{t\}\times\RR$,
without drift. If we set $Y(s)=X(t-s)+x+\theta(t-s)$, then we have
for any $s\in[0,t]$ and $x\ge0$ that
\begin{equation}
Y(t-s)\ge0\iff X(s)\ge-x-\theta s\impliedby|X(s)|\le x+\theta s.\label{eq:YtoX}
\end{equation}

Now we apply \cite[Proposition 3.3-(point-to-line)]{Das-Zhu-22b},
with $\eps\leftarrow t^{-1}$ and $t\leftarrow0$, to obtain, for
every $\delta\in(0,1/2)$, constants $C_{1},C_{2}<\infty$ depending
only on $\delta$ such that, for all $m\ge1$, we have
\begin{equation}
\mathbb{P}\left(\overleftarrow{Q}_{t}\left(\sup_{s\in[0,t]}\frac{|X(s)|}{t^{1/6+\delta}s^{1/2-\delta}}\ge m\right)\ge C_{1}\e^{-m^{2}/C_{1}}\right)\le C_{2}\e^{-m^{3}/C_{2}},\label{eq:DZresult}
\end{equation}
where we use $X$ for the continuum directed random polymer under
the measure $\overleftarrow{Q}_{t}$. By Young's inequality, we have
a constant $C_{3}<\infty$, depending only on $\delta$ and $\theta$,
such that
\begin{equation}
mt^{1/6+\delta}s^{1/2-\delta}\le C_{3}(mt^{1/6+\delta})^{2/(1+2\delta)}+\theta s.\label{eq:useyoung}
\end{equation}
Taking $m=t^{1/12-\delta/2}/C_{3}^{1/2+\delta}$ and $\delta=1/12$,
the right side of \cref{eq:useyoung} becomes $t^{1/2}+\theta s$,
and then from \cref{eq:DZresult} we obtain constants $C_{4},C_{5}<\infty$,
depending only on $\theta$, such that for sufficiently large $t$,
we have
\[
\mathbb{P}\left(\overleftarrow{Q}_{t}\left(\exists s\in[0,t]\text{ s.t. }|X(s)|\ge t^{1/2}+\theta s\right)\ge C_{4}\e^{-t^{1/2}/C_{4}}\right)\le C_{5}\e^{-t^{1/8}/C_{5}}.
\]
Using this along with \cref{eq:revtime,eq:YtoX},
we obtain
\[
\mathbb{P}\left(Q_{\theta,t,t^{1/2}}(A_{t,+})\le1-C_{4}\e^{-t^{1/12}/C_{4}}\right)\le C_{5}e^{-t^{1/8}/C_{5}},
\]
which implies \cref{eq:QAlimit}.
\end{proof}
\begin{lem}
\label{lem:hlSHE}If we define
\begin{equation}
\phi_{\pm}(t,x)=Q_{\pm\theta,t,x}(A_{t,\pm})\int_{\RR}Z(t,x\viiva0,y)\e^{\pm\theta y}\,\dif y,\label{eq:phidef}
\end{equation}
then $\phi_{\pm}$ solves the half-line stochastic heat equation
\begin{align}
\dif\phi_{\pm}(t,x) & =\frac{1}{2}\Delta\phi_{\pm}(t,x)\dif t+\phi_{\pm}(t,x)\dif W(t,x), & t,\pm x>0;\label{eq:hlSHE}\\
\phi_{\pm}(0,x) & =\e^{\pm\theta x}, & \pm x>0;\label{eq:hlSHE-ic}\\
\phi_{\pm}(t,0) & =0, & t>0.\label{eq:hlSHE-bc}
\end{align}
\end{lem}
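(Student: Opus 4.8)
The plan is to identify $\phi_{\pm}(t,x)$ with the partition function of the continuum directed random polymer \emph{absorbed} upon reaching the origin, and then to recognize that partition function as the mild solution of the half-line stochastic heat equation built from the Dirichlet heat kernel $G^{\mathrm D}(u,x,y)\coloneqq G(u,x-y)-G(u,x+y)$ (for $u>0$ and $x,y>0$), which is the transition density of Brownian motion killed at $0$. We treat $\phi_{+}$, the case of $\phi_{-}$ being identical after the spatial reflection $x\mapsto-x$ (using \cref{eq:Z_reflect} and the law-invariance of the noise under this reflection).

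As a first step I would record the Wiener chaos expansion of the full-line partition function: from the construction of $Z$ in \cite{Alberts-Khanin-Quastel-2014a} one has, convergent in $L^{2}$,
\[
\int_{\RR}Z(t,x\viiva0,y)\e^{\theta y}\,\dif y=\sum_{k\ge0}\int_{0<s_{1}<\cdots<s_{k}<t}\int_{\RR^{k}}G(t-s_{k},x-z_{k})\Bigl(\prod_{i=2}^{k}G(s_{i}-s_{i-1},z_{i}-z_{i-1})\Bigr)\e^{\theta z_{1}+\theta^{2}s_{1}/2}\,W(\dif z_{1}\,\dif s_{1})\cdots W(\dif z_{k}\,\dif s_{k}),
\]
the $k=0$ term being $\e^{\theta x+\theta^{2}t/2}$. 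The heart of the proof is the claim that $\phi_{+}(t,x)$ has the corresponding \emph{Dirichlet} chaos expansion: the same series with every $G$ replaced by $G^{\mathrm D}$, every spatial integration variable (the $z_{i}$ as well as the initial point that was integrated out to produce the factor $\e^{\theta z_{1}+\theta^{2}s_{1}/2}$) restricted to $(0,\infty)$ --- so that factor becomes $\int_{0}^{\infty}G^{\mathrm D}(s_{1},z_{1},y)\e^{\theta y}\,\dif y$, and the $k=0$ term becomes $\int_{0}^{\infty}G^{\mathrm D}(t,x,y)\e^{\theta y}\,\dif y$.

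To prove this claim I would discretize the path constraint. Using the definition \cref{eq:Qdef} of $Q_{\theta,t,x}$, together with the fact that the polymer measure --- being absolutely continuous with respect to the Brownian bridge --- gives zero mass to nonnegative continuous paths that touch $0$, one realizes $A_{t,+}$ as a decreasing (modulo null sets) limit of cylinder events $\{Y(t_{1}^{(n)})>0,\dots,Y(t_{k_{n}}^{(n)})>0\}$ along time meshes of vanishing width, and thereby writes
\[
\phi_{+}(t,x)=\lim_{n\to\infty}\int_{\RR}\e^{\theta x_{0}}\int_{(0,\infty)^{k_{n}}}\prod_{j=0}^{k_{n}}Z\bigl(t_{j+1}^{(n)},x_{j+1}\viiva t_{j}^{(n)},x_{j}\bigr)\,\dif x_{1}\cdots\dif x_{k_{n}}\,\dif x_{0}
\]
with $t_{0}^{(n)}=0$, $t_{k_{n}+1}^{(n)}=t$, $x_{k_{n}+1}=x$. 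Expanding each factor in chaos, multiplying out and integrating the cylinder variables $x_{j}$ over $(0,\infty)$ produces, between consecutive noise points $z_{i}$ and $z_{i+1}$, a convolution of heat kernels integrated over $(0,\infty)$ at the intervening mesh times; as the mesh refines this converges to $G^{\mathrm D}(s_{i+1}-s_{i},z_{i+1},z_{i})$ (these are exactly the finite-dimensional densities of Brownian motion conditioned to stay positive, decreasing to those of absorbed Brownian motion), while the noise points are themselves forced into $(0,\infty)$ in the limit. Since $0\le G^{\mathrm D}\le G$, the order-$k$ chaos term of the discretized object is dominated in $L^{2}$, uniformly in $n$, by the order-$k$ term of the full-line expansion, so the limit in $n$, the sum over $k$, and the outer $\e^{\theta x_{0}}\,\dif x_{0}$ integral may all be interchanged; this gives the Dirichlet chaos expansion of $\phi_{+}$.

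Finally I would verify that the Dirichlet chaos series is the mild solution of \crefrange{eq:hlSHE}{eq:hlSHE-bc}: substituting it into the Duhamel equation $\psi(t,x)=\int_{0}^{\infty}G^{\mathrm D}(t,x,y)\e^{\theta y}\,\dif y+\int_{0}^{t}\int_{0}^{\infty}G^{\mathrm D}(t-s,x,y)\psi(s,y)\,W(\dif y\,\dif s)$ reproduces it term by term (the standard identification of the chaos series with the Picard fixed point), and, by Walsh's theory of stochastic PDEs, this mild equation has a unique solution, namely the solution of \cref{eq:hlSHE} with initial data \cref{eq:hlSHE-ic} and boundary data \cref{eq:hlSHE-bc} --- the boundary condition forced by $G^{\mathrm D}(u,0,y)=0$ and the initial condition by $G^{\mathrm D}(u,x,y)$ converging to $\delta(x-y)$ on $(0,\infty)$ as $u\downarrow0$. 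I expect the main obstacle to be this middle step: making rigorous that the mesh-refinement limit of the discretized constrained partition function is precisely the Dirichlet chaos series, with enough uniformity to commute the limit with the infinite chaos sum. This rests on the $L^{2}$ moment and continuity estimates for $Z$ from \cite{Alberts-Khanin-Quastel-2014a} and the termwise domination $G^{\mathrm D}\le G$.
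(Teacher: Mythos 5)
Your proposal is correct and rests on the same central mechanism as the paper's proof: discretize the path constraint $A_{t,+}$ along a vanishing time mesh, identify the resulting absorbing kernel with the Dirichlet heat kernel in the mesh-refinement limit, and use the pointwise domination by the full-line heat kernel together with $L^2$ moment bounds to commute the limit with the relevant sums and integrals. Where you diverge is the level at which the bookkeeping is done. The paper stays at the level of the stochastic integral equation: it defines $G_+^{(\eps)}$ directly via a Brownian-bridge representation, proves by induction on $t$ over $\eps$-intervals that the discretized $\phi_+^{(\eps)}$ satisfies the mild equation with kernel $G_+^{(\eps)}$, and then passes $\eps\to 0$ in that single equation using dominated convergence (deterministic term) and the It\^o isometry plus uniform fourth-moment bounds (stochastic term). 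You instead expand everything in Wiener chaos, argue term by term that the discretized constrained partition function converges to the ``Dirichlet'' chaos series, and only then identify that series with the Picard fixed point of the Duhamel equation. Both are legitimate; the chaos route requires an extra layer of uniform-in-$n$ $L^2$ control to swap the infinite chaos sum with the mesh limit (which you flag yourself, and which is exactly where the termwise bound $G^{\mathrm D}\le G$ does the work), whereas the paper's inductive mild-equation argument avoids any chaos decomposition and is somewhat more economical. Your remark that the polymer measure assigns zero mass to nonnegative paths touching $0$, so that the cylinder approximation of $A_{t,+}$ decreases to $A_{t,+}$ modulo null sets, matches the paper's use of path continuity to get $\bigcap_n A_{t,+}^{2^{-n}}=A_{t,+}$.
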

The quantities $\phi_\pm$ considered in \cref{lem:hlSHE} are the partition functions of the continuum directed random polymer but with the integral only taken over paths that stay in the respective half-space.
Before we prove \cref{lem:hlSHE}, we state the following corollary,
which is clear from \cref{lem:hlSHE} and the well-posedness
of the stochastic heat equation on the half-line.
\begin{cor}
\label{cor:phipmmeasurable}The process $(\phi_{+}(t,x))_{t,x\ge0}$
is measurable with respect to the restriction of $\dif W$ to $[0,\infty)^{2}$,
and the process $(\phi_{-}(t,x))_{t\ge0,x\le0}$ is measurable with
respect to the restriction of $\dif W$ to $[0,\infty)\times(-\infty,0]$,
and hence these two processes are independent of each other.
\end{cor}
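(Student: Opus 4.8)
The plan is to derive \cref{cor:phipmmeasurable} from \cref{lem:hlSHE} together with the well-posedness (in particular, uniqueness) of the stochastic heat equation on a half-line with Dirichlet boundary data. Let $p^{\mathrm D}(t,x,y)=G(t,x-y)-G(t,x+y)$ denote the Dirichlet heat kernel for $[0,\infty)$; it vanishes on $\{x=0\}\cup\{y=0\}$, is supported in $\{x\ge 0,\ y\ge 0\}$, and satisfies $0\le p^{\mathrm D}(t,x,y)\le G(t,x-y)$. The mild formulation of \crefrange{eq:hlSHE}{eq:hlSHE-bc} for $\phi_+$ reads
\begin{equation}
\phi_+(t,x)=\int_0^\infty p^{\mathrm D}(t,x,y)\,\e^{\theta y}\,\dif y+\int_0^t\!\!\int_0^\infty p^{\mathrm D}(t-s,x,y)\,\phi_+(s,y)\,\dif W(s,y),\qquad t,x\ge 0,\label{eq:mildphiplus}
\end{equation}
and the noise appears only through the stochastic integral, which ranges over $[0,t]\times[0,\infty)\subseteq[0,\infty)^2$. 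I would first establish that \cref{eq:mildphiplus} has a unique mild solution in the natural function class adapted to the exponentially growing initial datum $\e^{\theta y}$: this is a Picard iteration whose convergence and uniqueness follow from a Gronwall-type bound on weighted second moments, using $p^{\mathrm D}\le G$ to inherit the required kernel estimates from the whole-line case (see the literature on the half-line stochastic heat equation with Dirichlet boundary conditions). Since the proof of \cref{lem:hlSHE} exhibits the process $\phi_+$ from \cref{eq:phidef} as a mild solution of \crefrange{eq:hlSHE}{eq:hlSHE-bc}, uniqueness forces $\phi_+$ to coincide with the Picard limit.

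The measurability statement is then immediate from the structure of the iteration: the initial iterate $\phi_+^{(0)}(t,x)=\int_0^\infty p^{\mathrm D}(t,x,y)\,\e^{\theta y}\,\dif y$ is deterministic, and if $\phi_+^{(n)}$ is a measurable functional of the restriction of $\dif W$ to $[0,\infty)^2$, then so is $\phi_+^{(n+1)}$, since the stochastic integral in \cref{eq:mildphiplus} integrates $\phi_+^{(n)}$ against $\dif W$ only over that region. The limit $\phi_+$ (in $L^2$, or almost surely along a subsequence) therefore remains measurable with respect to $\sigma\bigl(\dif W|_{[0,\infty)^2}\bigr)$. Applying the reflection $x\mapsto -x$ to the half-line equation — equivalently, repeating the argument with the Dirichlet kernel on $(-\infty,0]$ and initial datum $\e^{-\theta x}$ — gives that $(\phi_-(t,x))_{t\ge 0,\,x\le 0}$ is measurable with respect to $\sigma\bigl(\dif W|_{[0,\infty)\times(-\infty,0]}\bigr)$.

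To conclude, I would invoke the defining property of space-time white noise: since $[0,\infty)\times[0,\infty)$ and $[0,\infty)\times(-\infty,0)$ are disjoint, the centered Gaussian families $\{\dif W(A):A\subseteq[0,\infty)^2\}$ and $\{\dif W(A):A\subseteq[0,\infty)\times(-\infty,0)\}$ are independent, and hence so are the $\sigma$-algebras they generate. Combined with the two measurability statements above, this yields the independence of $(\phi_+(t,x))_{t,x\ge 0}$ and $(\phi_-(t,x))_{t\ge 0,\,x\le 0}$. The one genuinely substantive point in this plan is the well-posedness of the Dirichlet half-line SHE with exponentially growing initial data; everything else is bookkeeping once uniqueness and the mild formulation \cref{eq:mildphiplus} are in hand.
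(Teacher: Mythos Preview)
Your proposal is correct and follows exactly the route the paper indicates: the paper simply states that the corollary ``is clear from \cref{lem:hlSHE} and the well-posedness of the stochastic heat equation on the half-line,'' and your argument via uniqueness of the mild solution and Picard iteration is the natural way to flesh that out. The one point you flag as substantive---well-posedness of the Dirichlet half-line SHE with exponentially growing initial data---is precisely the ingredient the paper also takes for granted.
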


We will prove the $+$ case of \cref{lem:hlSHE}; the $-$ case
is symmetrical. We use an approximation argument. For $\eps>0$, define
\[
A_{t,+}^{(\eps)}\coloneqq\{Y(s)>0\text{ for all }s\in[0,t]\cap\eps\ZZ\}
\]
and
\[
\phi_{+}^{(\eps)}(t,x)=Q_{\theta,t,x}(A_{t,+}^{(\eps)})\int_{\RR}Z(t,x\viiva0,y)\e^{\theta y}\,\dif y.
\]
For $t,x>0$, we have by \cref{eq:Qdef} that 
\begin{equation}
\phi_{+}^{(\eps)}(t,x)=\int_{(0,\infty)^{J+1}}\e^{\theta x_{0}}\prod_{j=0}^{J}Z(\mathfrak{t}_{j+1},x_{j+1}\viiva\mathfrak{t}_{j},x_{j})\,\dif x_{0}\cdots\dif x_{J},\label{eq:phiepsdef}
\end{equation}
where we have defined
\[
\mathfrak{t}_{0}=0,\qquad(\mathfrak{t}_{J+1},x_{J+1})=(t,x),\qquad\{\mathfrak{t}_{1}\le\cdots\le\mathfrak{t}_{J}\}=(0,t)\cap\eps\ZZ.
\]
 Now, recalling that $G$ is the standard heat kernel, we also define,
for $s<t$ and $x,y\in\RR$,
\begin{equation}
G_{+}^{(\eps)}(t,x\viiva s,y)=\int_{(0,\infty)^{J}}\prod_{k=0}^{K}G(\mathfrak{s}_{k+1}-\mathfrak{s}_{k},x_{k+1}-x_{k})\,\dif x_{1}\cdots\dif x_{K},\label{eq:Gepsdef}
\end{equation}
where, here, we use the notation
\[
(\mathfrak{s}_{0},x_{0})=(s,y),\qquad(\mathfrak{s}_{K+1},x_{K+1})=(t,x),\qquad\{\mathfrak{s}_{1}\le\cdots\le\mathfrak{s}_{K}\}=(s,t)\cap\eps\ZZ.
\]
We also note that
\begin{equation}
\frac{G_{+}^{(\eps)}(t,x\viiva s,y)}{G(t-s,x-y)}=\mathbb{P}_{t,x\viiva s,y}\left(X_{\mathfrak{s}_{k}}\ge0\text{ for all }k\in\{1,\ldots,K\}\right),\label{eq:BBrep}
\end{equation}
where $\mathbb{P}_{t,x\viiva s,y}$ is the probability measure under
which $X$ is Brownian bridge with unit quadratic variation such that
$X_{s}=y$ and $X_{t}=x$. We note here that $K$ depends on $\eps$. It follows from \cref{eq:BBrep} and
the continuity of Brownian bridge that, for each $t,s,x,y$, we have
\begin{equation}
    \begin{aligned}
        \lim_{\eps\downarrow0}G_{+}^{(\eps)}(t,x\viiva s,y)&=G(t-s,x-y)\mathbb{P}_{t,x\viiva s,y}\left(X_{r}\ge0\text{ for all }r\in(s,t)\right)\\&=[G(t-s,x-y)-G(t-s,x+y)]\mathbf{1}\{x,y\ge0\}\eqqcolon G_+(t,x\viiva s,y).
\end{aligned}\label{eq:Gepslimit}
\end{equation}
The second identity is the formula for the transition density of Brownian motion killed at the origin, which is obtained by the reflection principle; see e.g.~\cite[(2.8.9)]{Karatzas-Shreve-88}.

It also follows from the definition \cref{eq:Gepsdef} that,
if $k\in\ZZ$, $s\le k\eps\le t$, and $x,y\in\RR$, then
\begin{equation}
\int_{(0,\infty)}G_{+}^{(\eps)}(t,x\viiva k\eps,z)G_{+}^{(\eps)}(k\eps,z\viiva s,y)\,\dif z=G_{+}^{(\eps)}(t,x\viiva s,y).\label{eq:Geps+KC}
\end{equation}
The following lemma says that the approximation $\phi^{(\eps)}$ solves
an approximation of the mild solution formula for \cref{eq:hlSHE}--\cref{eq:hlSHE-bc}:
\begin{lem}
We have
\begin{equation}
\phi_{+}^{(\eps)}(t,x)=\int_{\RR}G_{+}^{(\eps)}(t,x\viiva0,y)\e^{\theta y}\,\dif y+\int_{0}^{t}\int_{\RR}G_{+}^{(\eps)}(t,x\viiva s,y)\phi_{+}^{(\eps)}(s,y)\,\dif W(s,y).\label{eq:phiepsmild}
\end{equation}
\end{lem}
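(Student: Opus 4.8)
The plan is to prove \cref{eq:phiepsmild} by resolving, slab by slab in time, each propagator $Z$ appearing in the definition \cref{eq:phiepsdef} of $\phi_{+}^{(\eps)}$ by means of the mild (Duhamel) equation for the stochastic heat equation. Recall that the fundamental solution $Z$ constructed in \cite{Alberts-Khanin-Quastel-2014a} satisfies
\[
Z(t,x\viiva s,y)=G(t-s,x-y)+\int_{s}^{t}\int_{\RR}G(t-r,x-z)\,Z(r,z\viiva s,y)\,\dif W(r,z)
\]
for all $s<t$ and $x,y\in\RR$. I will also use the elementary semigroup identity for the skeleton process: writing $\mathfrak t_0=0<\mathfrak t_1<\dots<\mathfrak t_J<\mathfrak t_{J+1}=t$ for the successive multiples of $\eps$ as in \cref{eq:phiepsdef}, integrating out the variable $x_j\in(0,\infty)$ in \cref{eq:phiepsdef} gives
\[
\phi_{+}^{(\eps)}(t,x)=\int_{0}^{\infty}\phi_{+}^{(\eps)}(\mathfrak t_j,z)\,Z(t,x\viiva\mathfrak t_j,z)\,\dif z\qquad\text{whenever }\mathfrak t_j\le t<\mathfrak t_{j+1},
\]
because for such $t$ the discretization times of $\phi_{+}^{(\eps)}(t,\cdot)$ are exactly $\{\mathfrak t_0,\dots,\mathfrak t_j\}$; moreover $\phi_{+}^{(\eps)}(0,z)=\e^{\theta z}\mathbf 1\{z>0\}$ directly from \cref{eq:phiepsdef}.

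The first step is to peel off the final slab $[\mathfrak t_J,t]$: substitute the mild formula for the last factor $Z(t,x\viiva\mathfrak t_J,x_J)$ in \cref{eq:phiepsdef} and use stochastic Fubini to move the finitely many spatial integrals through the stochastic integral. Since $(\mathfrak t_J,t)\cap\eps\ZZ=\emptyset$, one has $G_{+}^{(\eps)}(t,x\viiva r,z)=G(t-r,x-z)$ for every $r\in[\mathfrak t_J,t]$, so the heat-kernel part is $\int_{0}^{\infty}\phi_{+}^{(\eps)}(\mathfrak t_J,z)\,G_{+}^{(\eps)}(t,x\viiva\mathfrak t_J,z)\,\dif z$, while the noise part, after Fubini and one use of the semigroup identity, is $\int_{\mathfrak t_J}^{t}\int_{\RR}G_{+}^{(\eps)}(t,x\viiva r,z)\,\phi_{+}^{(\eps)}(r,z)\,\dif W(r,z)$. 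Thus after this step
\[
\phi_{+}^{(\eps)}(t,x)=\int_{0}^{\infty}\phi_{+}^{(\eps)}(\mathfrak t_J,z)\,G_{+}^{(\eps)}(t,x\viiva\mathfrak t_J,z)\,\dif z+\int_{\mathfrak t_J}^{t}\int_{\RR}G_{+}^{(\eps)}(t,x\viiva r,z)\,\phi_{+}^{(\eps)}(r,z)\,\dif W(r,z).
\]

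Next I would iterate downward over the slabs $[\mathfrak t_{j-1},\mathfrak t_j]$ for $j=J,J-1,\dots,1$, maintaining the invariant that the displayed identity holds with $\mathfrak t_J$ replaced by $\mathfrak t_j$. At each step, rewrite the current heat-kernel term using $\phi_{+}^{(\eps)}(\mathfrak t_j,z)=\int_{0}^{\infty}\phi_{+}^{(\eps)}(\mathfrak t_{j-1},z')\,Z(\mathfrak t_j,z\viiva\mathfrak t_{j-1},z')\,\dif z'$, apply the mild formula to this new $Z$, and apply stochastic Fubini again. Integrating out the intermediate variable $z\in(0,\infty)$ at time $\mathfrak t_j$, the new heat-kernel contribution recombines into $\int_{0}^{\infty}\phi_{+}^{(\eps)}(\mathfrak t_{j-1},z')\,G_{+}^{(\eps)}(t,x\viiva\mathfrak t_{j-1},z')\,\dif z'$ — this is precisely the Chapman--Kolmogorov relation \cref{eq:Geps+KC} applied with $k\eps=\mathfrak t_j$ — while the new noise contribution equals $\int_{\mathfrak t_{j-1}}^{\mathfrak t_j}\int_{\RR}G_{+}^{(\eps)}(t,x\viiva r,z)\,\phi_{+}^{(\eps)}(r,z)\,\dif W(r,z)$ (again by the semigroup identity and \cref{eq:Geps+KC}), which glues onto the accumulated noise term and extends its lower time-endpoint to $\mathfrak t_{j-1}$. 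After the step $j=1$ the heat-kernel term has become $\int_{0}^{\infty}\e^{\theta z'}\,G_{+}^{(\eps)}(t,x\viiva 0,z')\,\dif z'$, which is the initial-data term of \cref{eq:phiepsmild} (recalling $\phi_{+}^{(\eps)}(0,\cdot)=\e^{\theta\cdot}\mathbf 1\{\cdot>0\}$), and the noise term has become $\int_{0}^{t}\int_{\RR}G_{+}^{(\eps)}(t,x\viiva s,y)\,\phi_{+}^{(\eps)}(s,y)\,\dif W(s,y)$; summing the two yields \cref{eq:phiepsmild}. When $J=0$ the single slab $[0,t]$ already gives the identity, with $G_{+}^{(\eps)}\equiv G$ throughout.

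I expect the only genuine obstacle to be the justification of the stochastic-Fubini exchanges, together with the check that every integrand appearing along the way — in particular the final one, $(s,y)\mapsto G_{+}^{(\eps)}(t,x\viiva s,y)\,\phi_{+}^{(\eps)}(s,y)$ — is adapted and square-integrable, hence an admissible integrand for the Walsh stochastic integral against $\dif W$. I would handle this by invoking the standard Gaussian-type second-moment bounds on $\EE[Z(t,x\viiva s,y)^2]$ established in \cite{Alberts-Khanin-Quastel-2014a}; combined with Gaussian integration over the finitely many intermediate spatial variables (their number depending on $t$ and $\eps$ but finite, so all constants are $\eps$-dependent but finite), these control the relevant $L^2$ norms and legitimize each exchange. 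Everything else is bookkeeping: keeping track that the discretization times of $\phi_{+}^{(\eps)}$ at an intermediate time $\mathfrak t_j$ are exactly $\{\mathfrak t_0,\dots,\mathfrak t_{j-1}\}$, and that no open slab $(\mathfrak t_{j-1},\mathfrak t_j)$ contains a multiple of $\eps$, so that $G_{+}^{(\eps)}$ degenerates to the free heat kernel on each slab and \cref{eq:Geps+KC} reassembles the slabs correctly.
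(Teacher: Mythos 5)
Your proposal is correct and takes essentially the same approach as the paper: slab-by-slab resolution via the mild (Duhamel) formula for $Z$ on each $\eps$-slab, where $G_{+}^{(\eps)}$ coincides with the free heat kernel, together with the Chapman--Kolmogorov relation \cref{eq:Geps+KC} to reassemble the slabs. The paper phrases this as a forward induction on the slab index $k$ (establishing the identity for $t\le k\eps$, then extending to $t\le(k+1)\eps$ by noting that $\phi_{+}^{(\eps)}$ solves the SHE on each slab with the truncated initial datum $\phi_{+}^{(\eps)}(k\eps,\cdot)\mathbf 1\{\cdot\ge 0\}$), whereas you unroll the same induction by peeling slabs off from the top; the content is identical.
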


\begin{proof}
We proceed by induction on $t$. First suppose that $t\in[0,\eps]$.
Then we have $J=0$ and
\[
\phi_{+}^{(\eps)}(t,x)=\int_{(0,\infty)}\e^{\theta x_{0}}Z(t,x\viiva0,x_{0})\,\dif x_{0}.
\]
Also, for all $s\in[0,t]$, we have in this case
\[
G_{+}^{(\eps)}(t,x\viiva s,y)=G(t-s,x-y).
\]
Thus, in this case, \cref{eq:phiepsmild} is simply the mild
solution formula for the stochastic heat equation.

Now suppose that \cref{eq:phiepsmild} for all $t\le k\eps$.
We will use this inductive hypothesis to prove \cref{eq:phiepsmild}
for $t\in(k\eps,(k+1)\eps]$. So let $t\in(k\eps,(k+1)\eps]$. Since
$G_{+}^{(\eps)}(s',x\viiva s,y)=G(s'-s,x-y)$ for all $k\eps<s\le s'\le(k+1)\eps$,
and $\phi^{(\eps)}$ satisfies the stochastic heat equation on $(k\eps,(k+1)\eps]$
with initial condition $\phi^{(\eps)}(k\eps,x)=\phi^{(\eps)}(t,x)\mathbf{1}\{x\ge0\}$,
the mild solution formula for the stochastic heat equation again tells
us that
\[
\phi_{+}^{(\eps)}(t,x)=\int_{(0,\infty)}G_{+}^{(\eps)}(t,x\viiva k\eps,y)\phi_{+}^{(\eps)}(k\eps,y)\,\dif y+\int_{k\eps}^{t}\int_{\RR}G_{+}^{(\eps)}(t,x\viiva s,y)\phi_{+}^{(\eps)}(s,y)\,\dif W(s,y).
\]
By the inductive hypothesis, we have
\begin{align*}
\int_{(0,\infty)} & G_{+}^{(\eps)}(t,x\viiva k\eps,y)\phi_{+}^{(\eps)}(k\eps,y)\,\dif y\\
 & =\int_{(0,\infty)}G_{+}^{(\eps)}(t,x\viiva k\eps,y)\int_{\RR}G_{+}^{(\eps)}(k\eps,y\viiva0,y')\e^{\theta y'}\,\dif y'\,\dif y\\
 & \qquad+\int_{(0,\infty)}G_{+}^{(\eps)}(t,x\viiva k\eps,y)\left(\int_{0}^{t}\int_{\RR}G_{+}^{(\eps)}(k\eps,y\viiva s,y')\phi_{+}^{(\eps)}(s,y')\,\dif W(s,y')\right)\,\dif y\\
 & =\int_{\RR}\e^{\theta y}G_{+}^{(\eps)}(t,x\viiva0,y)\,\dif y+\int_{0}^{t}\int_{\RR}G_{+}^{(\eps)}(t,x\viiva s,y)\phi_{+}^{(\eps)}(s,y)\,\dif W(s,y),
\end{align*}
where in the last identity we used \cref{eq:Geps+KC} on each
term. This completes the inductive step and hence the proof.
\end{proof}
Now we can complete the proof of \cref{lem:hlSHE}.
\begin{proof}[Proof of \cref{lem:hlSHE}]
The sequence of sets $(A_{t,+}^{2^{-n}})_{n\in\NN}$ is decreasing,
and the continuity of $Y$ implies that $\bigcap\limits_{n\in\NN}A_{t,+}^{2^{-n}}=A_{t,+}$.
By the definitions \cref{eq:phidef,eq:phiepsdef},
this means that the sequence $(\phi_{+}^{2^{-n}})_{n\in\NN}$ is almost-surely
decreasing in $n$ and that, for each $t,x$, we have
\begin{equation}
\lim_{n\to\infty}\phi_{+}^{2^{-n}}(t,x)=\phi_{+}(t,x)\qquad\text{a.s.}\label{eq:limphi}
\end{equation}
Using \eqref{eq:BBrep}, we have
\begin{equation}
0\le G_{+}^{(2^{-n})}(t,x\viiva0,y)\le G(t-s,x-y)\mathbf{1}\{y\ge0\},\label{eq:Gestimate}
\end{equation}
and thus, we have by \cref{eq:Gepslimit} and the dominated convergence
theorem that
\begin{equation}
\lim_{n\to\infty}\int_{\RR}G_{+}^{(2^{-n})}(t,x\viiva0,y)\e^{\theta y}\,\dif y=\int_{(0,\infty)}G_+(t,x\viiva s,y)\e^{\theta y}\,\dif y.\label{eq:dettermconverges}
\end{equation}
Moreover, we have by the Itô isometry that
\begin{align}
 & \EE\left|\int_{0}^{t}\int_{\RR}\left(G_{+}^{(2^{-n})}(t,x\viiva s,y)\phi_{+}^{(2^{-n})}(s,y)-G_+(t,x\viiva s,y)\phi_{+}(s,y)\right)\,\dif W(s,y)\right|^{2}\nonumber \\
 & \ =\int_{0}^{t}\int_{\RR}\EE\left|G_{+}^{(\eps)}(t,x\viiva s,y)\phi_{+}^{(2^{-n})}(s,y)-G_+(t,x\viiva s,y)\phi_{+}(s,y)\right|^{2}\,\dif y\,\dif s.\label{eq:secondmomentintegrals}
\end{align}
Standard moment estimates for the stochastic heat equation on the line (see e.g.~\cite{Khoshnevisan-2014}), along with the fact that $\phi_+^{(2^{-n})}(s,y)$ is decreasing in $n$, show that
\[
\EE|\phi_{+}^{(2^{-n})}(s,y)|^{4}\vee \EE|\phi_{+}(s,y)|^{4}\le C\e^{4\theta y}
\]
for some constant $C<\infty$ independent of $n$. This and \cref{eq:Gestimate}
allow us to use uniform integrability and the dominated convergence
theorem with \cref{eq:Gepslimit,eq:limphi} in \cref{eq:secondmomentintegrals} to see that 
\begin{equation}
\begin{aligned}\lim_{n\to\infty} & \int_{0}^{t}\int_{\RR}G_{+}^{(2^{-n})}(t,x\viiva s,y)\phi_{+}^{(2^{-n})}(s,y)\,\dif W(s,y)\\
 & =\int_{0}^{t}\int_{\RR}[G(t-s,x-y)-G(t-s,x+y)]\mathbf{1}\{y\ge0\}\phi_{+}(s,y)\,\dif W(s,y)
\end{aligned}
\label{eq:stochtermconverges}
\end{equation}
in probability. Now using \cref{eq:limphi,eq:dettermconverges,eq:stochtermconverges} in \cref{eq:phiepsmild},
we see that 
\[
\phi_{+}(t,x)=\int_{0}^{\infty}G_+(t,x\viiva 0,y)\e^{\theta y}\,\dif y+\int_{0}^{t}\int_{0}^{\infty}G_+(t,x\viiva s,y)\phi_{+}(s,y)\,\dif W(s,y),
\]
and hence that $\phi_{+}$ is a mild solution to \cref{eq:hlSHE}--\cref{eq:hlSHE-bc}.
\end{proof}
Now we can complete the proof of \cref{thm:zeroval-flat}.
\begin{proof}[Proof of \cref{thm:zeroval-flat}]
We note that
\begin{equation}
h_{-}(t,x)=\log\int_{\RR}Z(t,x\viiva0,y)\e^{-\theta y}\,\dif y\qquad\text{and}\qquad h_{+}(t,x)=\log\int_{\RR}Z(t,x\viiva0,y)\e^{\theta y}\,\dif y.\label{eq:hplushminus}
\end{equation}
Comparing \cref{eq:hplushminus} with \cref{eq:phidef}
and using \cref{lem:QAlimit}, we see that
\begin{equation}
|h_{-}(t,-t^{1/2})-\log\phi_{-}(t,-t^{1/2})|+|h_{+}(t,t^{1/2})-\log\phi_{+}(t,t^{1/2})|\xrightarrow[t\to\infty]{\mathrm{P}}0.\label{eq:convergetophipm}
\end{equation}
By the shear invariance in \cref{rmk:shear} and the shift invariance in \eqref{eq:flat_shift}, we have that $h_-(t,0) - h_-(t,- t^{-1/2})$ and $h_+(t,0) - h_+(t, t^{-1/2})$ (marginally) have the laws of
\[
h_0(t,0) - h_0(t,-t^{1/2}) - \theta t^{1/2},\quad\text{and}\quad h_0(t,0) - h_0(t,t^{1/2}) - \theta t^{1/2}, 
\]
respectively, where $h_0$ solves the KPZ equation with $0$ (flat) initial data. Also, by \cref{prop:KPZ_eq_conv},
the rescaled KPZ equation with flat initial data converges to the KPZ fixed point (which has continuous sample paths), in the sense
of uniform convergence on compact sets. 
 Using this and noting that $t^{1/2}=o(t^{2/3})$ as $t\to\infty$, we have
\begin{equation}
t^{-1/3}|h_{-}(t,0)-h_{-}(t,-t^{1/2}) + \theta t^{1/2}|+t^{-1/3}|h_{+}(t,0)-h_{+}(t,t^{1/2}) + \theta t^{1/2}|\xrightarrow[t\to\infty]{\mathrm{P}}0.\label{eq:fixendpoint}
\end{equation}
Next, by \cref{lem:TW_conv}, we know that
\[
\frac{h_{-}(t,0)+\left(\frac{1}{24}-\frac{\theta^{2}}{2}\right)t}{t^{1/3}}\qquad\text{and}\qquad\frac{h_{+}(t,0)+\left(\frac{1}{24}-\frac{\theta^{2}}{2}\right)t}{t^{1/3}}
\]
each converge in distribution, as $t\to\infty$, to $X/2$, where
$X$ is a Tracy--Widom GOE random variable. Combining this observation
with \cref{eq:convergetophipm,eq:fixendpoint},
we see that
\[
t^{-1/3}\left[\log\phi_{+}(t,t^{1/2})+\left(\frac{1}{24}-\frac{\theta^{2}}{2}\right)t - \theta t^{1/2} \right],\quad\text{and}\quad t^{-1/3}\left[\log\phi_{-}(t,-t^{1/2})+\left(\frac{1}{24}-\frac{\theta^{2}}{2}\right)t - \theta t^{1/2}\right]
\]
each converge in distribution to $X/2$. On the other hand, \cref{cor:phipmmeasurable}
tells us that $\phi_{-}(t,-t^{1/2})$ and $\phi_{+}(t,t^{1/2})$ are
independent of one another, and so in fact we have
\[
\left(t^{-1/3}\left[\log\phi_{-}(t,-t^{1/2})+\left(\frac{1}{24}-\frac{\theta^{2}}{2}\right)t - \theta t^{1/2}\right],t^{-1/3}\left[\log\phi_{+}(t,t^{1/2})+\left(\frac{1}{24}-\frac{\theta^{2}}{2}\right)t - \theta t^{1/2}\right]\right)
\]
converges to $\left(\frac{X_{1}}{2},\frac{X_{2}}{2}\right)$, where $X_{1}$ and $X_{2}$ are independent Tracy--Widom GOE random
variables. Subtracting, and then applying \cref{eq:convergetophipm,eq:fixendpoint} again, yields \cref{eq:GOEconv}.
\end{proof}

\section{V-shaped solutions}\label{sec:V-shaped}

In this section we complete the proofs of \cref{thm:no-v-shaped,thm:whatcanyouconvergeto,thm:asconvergence}. The idea of the proof is
to write a V-shaped solution using the $V$ function from two solutions
with asymptotic slopes (\cref{lem:A_map}). The convergence of
these asymptotically sloped solutions described in \cref{prop:spatially-homog-convergence}
will then let us consider these sloped solutions as stationary.
\begin{lem}
\label{lem:A_map}Let $\theta>0$. There is a (deterministic) measurable
function $\mathbf{A}\colon\mathcal{V}(\theta)\to\mathcal{X}(\theta)$
such that, for all $f_{\mathsf{V}}\in\mathcal{V}(\theta)$, we have
\begin{equation}
V[\mathbf{A}[f_{\mathsf{V}}]]=f_{\mathsf{V}}.\label{eq:VAid}
\end{equation}
 
\end{lem}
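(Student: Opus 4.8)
The plan is to define $\mathbf{A}$ by an explicit deterministic formula and check the required properties directly. Recalling \cref{eq:Vdef}, for a given $f_{\mathsf{V}}\in\mathcal{V}(\theta)$ we need a pair $(f_-,f_+)\in\mathcal{Y}(\theta)$ with $\e^{f_+(x)}+\e^{f_-(x)}=2\e^{f_{\mathsf{V}}(x)}$ for all $x$. Writing $f_\pm=f_{\mathsf{V}}+g_\pm$, this is equivalent to $\e^{g_+(x)}+\e^{g_-(x)}=2$, so it suffices to fix a continuous weight $p\colon\RR\to(0,1)$ and set $g_+=\log(2p)$, $g_-=\log(2(1-p))$. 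The choice that makes the asymptotic slopes come out correctly is the logistic function $p(x)=(1+\e^{-2\theta x})^{-1}$, and I would then define $\mathbf{A}[f_{\mathsf{V}}]\coloneqq(f_-,f_+)$ with
\[
f_+(x)=f_{\mathsf{V}}(x)+\log\frac{2}{1+\e^{-2\theta x}},\qquad f_-(x)=f_{\mathsf{V}}(x)+\log\frac{2}{1+\e^{2\theta x}}.
\]

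By construction $\e^{f_+(x)}+\e^{f_-(x)}=\e^{f_{\mathsf{V}}(x)}\bigl(2p(x)+2(1-p(x))\bigr)=2\e^{f_{\mathsf{V}}(x)}$, so $V[\mathbf{A}[f_{\mathsf{V}}]]=f_{\mathsf{V}}$, which is \cref{eq:VAid}. Next one checks that $f_\pm\in\mathcal{C}_{\mathrm{KPZ}}$: continuity and real-valuedness are immediate since $0<p<1$, and the integrability required in \cref{eq:CKPZdef} is inherited from $f_{\mathsf{V}}$ because $2p\le2$ and $2(1-p)\le2$, so $\int_\RR\e^{f_\pm(x)-ax^2}\,\dif x\le2\int_\RR\e^{f_{\mathsf{V}}(x)-ax^2}\,\dif x<\infty$ for every $a>0$.

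It remains to verify the slope conditions defining $\mathcal{Y}(\theta)$, which is the only place the specific choice of $p$ enters. Since $\log\frac{2}{1+\e^{-2\theta x}}$ is bounded as $x\to+\infty$ and equals $2\theta x+O(1)$ as $x\to-\infty$, while $\log\frac{2}{1+\e^{2\theta x}}$ equals $-2\theta x+O(1)$ as $x\to+\infty$ and is bounded as $x\to-\infty$, the defining property $\lim_{|x|\to\infty}f_{\mathsf{V}}(x)/|x|=\theta$ (i.e.\ $f_{\mathsf{V}}(x)/x\to\pm\theta$ as $x\to\pm\infty$) yields $f_+(x)/x\to\theta$ and $f_-(x)/x\to-\theta$ as $x\to+\infty$, and $f_+(x)/x\to-\theta+2\theta=\theta$ and $f_-(x)/x\to-\theta$ as $x\to-\infty$; hence $(f_-,f_+)\in\mathcal{Y}(\theta)$. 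Finally, $\mathbf{A}$ is the map obtained by adding the two fixed continuous functions $\log(2p)$ and $\log(2(1-p))$ to $f_{\mathsf{V}}$; this preserves uniform convergence on compact sets and, since $2p,2(1-p)\le2$, convergence of the weighted integrals as well (by a generalized dominated convergence argument), so $\mathbf{A}$ is continuous, and in particular Borel measurable. There is no genuine obstacle here beyond arranging the logarithmic corrections to grow exactly linearly with the right coefficients; any $p$ with $p(x)\sim\e^{2\theta x}$ as $x\to-\infty$ and $1-p(x)\sim\e^{-2\theta x}$ as $x\to+\infty$ would serve equally well.
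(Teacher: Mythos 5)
Your proof is correct and takes a genuinely different, cleaner route than the paper's. The paper locates the rightmost minimizer $x_{\min}$ of $f_{\mathsf{V}}$ and builds $(f_-,f_+)$ piecewise: on $[x_{\min},\infty)$ it takes $f_-$ to be a line of slope $-\theta$ and solves for $f_+$ from $V[f_-,f_+]=f_{\mathsf{V}}$, and symmetrically on $(-\infty,x_{\min}]$ with the roles reversed; this forces a careful choice of two constants $C_1,C_2$ (one inequality constraint, one equality constraint) to keep the argument of the $\log$ positive everywhere and to glue continuously at $x_{\min}$, plus a remark that the constants can be chosen measurably. Your construction sidesteps all of this by observing that the fiber $V^{-1}(f_{\mathsf{V}})$ is parametrized by writing $f_\pm=f_{\mathsf{V}}+g_\pm$ with $\e^{g_+}+\e^{g_-}\equiv 2$, and that the logistic weight $p(x)=(1+\e^{-2\theta x})^{-1}$ produces exactly the linear corrections $\pm 2\theta x+O(1)$ needed on the appropriate half-lines to move the asymptotic slopes from $\pm\theta$ (at the two ends of the V) to a uniform $\theta$ for $f_+$ and a uniform $-\theta$ for $f_-$. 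The result is a single global formula: no case split, no constants to tune, positivity of the $\log$ argument is automatic, and you get for free that $\mathbf{A}$ is actually continuous on $\mathcal{V}(\theta)$, which is strictly stronger than the measurability the lemma asks for. Both proofs are valid; yours is more direct and arguably the one the lemma deserves.
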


\begin{proof}
  We define
  \[
    \mathbf{A}[f_{\mathsf{V}}]\coloneqq \left(f_{\mathsf{V}}(x) - \log\frac{\e^{2\theta x}+1}{2},f_{\mathsf{V}}(x) - \log\frac{\e^{-2\theta x}+1}{2}\right).
  \]
Then, recalling the definition \cref{eq:Vdef}, we have
\[
  V[\mathbf{A}[f_{\mathsf{V}}]](x) =   \log\frac{\e^{f_{\mathsf{V}}(x)}\cdot\frac{2}{\e^{2\theta x}+1} +\e^{f_{\mathsf{V}}(x)}\cdot\frac{2}{\e^{-2\theta x}+1}}{2} = f_{\mathsf{V}}(x).
\]
From the definitions \cref{eq:Vthetadef,eq:Ythetadef}, we see that if $f_{\mathsf{V}}\in \mathcal{V}(\theta)$ then $\mathbf{A}[f_{\mathsf{V}}] \in \mathcal{Y}(\theta)$.
Moreover, if $(f_-,f_+) = \mathbf{A}[f_{\mathsf{V}}]$, then $f_+(x)-f_-(x) = 2\theta x$, which is evidently increasing in $x$, so indeed (recalling the definition \cref{eq:Xthetadef}) we have $\mathbf{A}[f_{\mathsf{V}}]\in\mathcal{X}(\theta)$ as well.
\end{proof}
\begin{lem}
\label{lem:Vclosetooneofthem}Let $\theta>0$ and suppose that $(f_{-},f_{+})\in\mathcal{X}(\theta)$.
Let $f_{\mathsf{V}}=V[f_{-},f_{+}]$.
\begin{enumerate}
\item \label{enu:fplusgtr}If $f_{+}(0)\ge f_{-}(0)$, then for all $x\ge0$,
we have $|f_{\mathsf{V}}(x)-f_{\mathsf{V}}(0)-(f_{+}(x)-f_{+}(0))|\le\log2$.
\item \label{enu:fminusgtr}If $f_{+}(0)\le f_{-}(0)$, then for all $x\le0$,
we have $|f_{\mathsf{V}}(x)-f_{\mathsf{V}}(0)-(f_{-}(x)-f_{-}(0))|\le\log2$.
\end{enumerate}
\end{lem}

\begin{proof}
We prove the first assertion; the proof of the second is similar.
So suppose that
$f_{+}(0)\ge f_{-}(0)$. Then we have, for all $x\ge0$, that
\[
f_{\mathsf{V}}(x)-f_{\mathsf{V}}(0)=\log\frac{\e^{f_{+}(x)}+\e^{f_{-}(x)}}{\e^{f_{+}(0)}+\e^{f_{-}(0)}}\ge\log\frac{\e^{f_{+}(x)}}{2\e^{f_{+}(0)}}=f_{+}(x)-f_{+}(0)-\log2.
\]
On the other hand, we have
\begin{align*}
f_{\mathsf{V}}(x)-f_{\mathsf{V}}(0)=\log\frac{\e^{f_{+}(x)}(1+\e^{f_{-}(x)-f_{+}(x)})}{\e^{f_{+}(0)}+\e^{f_{-}(0)}} & \le f_{+}(x)-f_{+}(0)+\log(1+\e^{f_{-}(x)-f_{+}(x)})\\
 & \le f_{+}(x)-f_{+}(0)+\log2,
\end{align*}
where in the last inequality we used that $f_-(x)-f_+(x)\le f_-(0)-f_+(0)\le 0$ since we assumed that $(f_-,f_+)\in\mathcal{X}(\theta)$. This completes the proof.
\end{proof}
The following proposition is a more precisely stated version of \cref{thm:no-v-shaped}.
\begin{prop}
\label{prop:noVshaped}There is no probability measure $\nu_{\mathsf{V}}$ on $\mathcal{C}_{\mathrm{KPZ};0}$ such that $\nu_{\mathsf{V}}(\mathcal{V}(\theta)\cap\mathcal{C}_{\mathrm{KPZ};0})=1$ and such that, if $h_{\mathsf{V}}$ solves \cref{eq:KPZ}
with initial data $h_{\mathsf{V}}(0,\cdot)\sim\nu_{\mathsf{V}}$ (independent
of the noise), then
\[
h_{\mathsf{V}}(t,\cdot)-h_{\mathsf{V}}(t,0)\sim\nu_{\mathsf{V}}\qquad\text{for all }t\ge0.
\]
\end{prop}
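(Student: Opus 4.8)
The plan is to argue by contradiction: assuming such a $\nu_{\mathsf V}$ exists, I will show that a solution started from $\nu_{\mathsf V}$ must, along a subsequence of times, leave the V-shaped class and converge to a Brownian motion with drift $+\theta$ or $-\theta$, which is incompatible with invariance. The structural reason, as anticipated after \cref{eq:hvintermsofstationaryquantities}, is that the recentered V-solution is controlled by the recentered sloped solutions together with the difference of their values at the origin, and it is this last quantity that cannot be stationary; \cref{thm:zeroval-stationarity} will supply the quantitative form of this.

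First I would write a sample $h_{\mathsf V}(0,\cdot)\sim\nu_{\mathsf V}$ (independent of the noise) as $h_{\mathsf V}(0,\cdot)=V[(h_-,h_+)(0,\cdot)]$ with $(h_-,h_+)(0,\cdot)=\mathbf A[h_{\mathsf V}(0,\cdot)]\in\mathcal Y(\theta)$ a.s.\ (\cref{lem:A_map}), and solve \cref{eq:KPZ} for $h_\pm$; then $h_{\mathsf V}(t,\cdot)=V[(h_-,h_+)(t,\cdot)]$ by \cref{prop:Vsolves}, while $\pi_0[h_{\mathsf V}(t,\cdot)]\sim\nu_{\mathsf V}$ for all $t$ by invariance. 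The centered $V$-formula reads
\[
\pi_0[h_{\mathsf V}(t,\cdot)](x)=\log\!\bigl(p_t\,\e^{\pi_0[h_+(t,\cdot)](x)}+(1-p_t)\,\e^{\pi_0[h_-(t,\cdot)](x)}\bigr),\qquad p_t=\bigl(1+\e^{-(h_+(t,0)-h_-(t,0))}\bigr)^{-1}\in(0,1).
\]
By \cref{prop:spatially-homog-convergence} (applied conditionally on the independent initial data, then integrated) we have $\pi_0[(h_-,h_+)(t,\cdot)]\Longrightarrow\nu_\theta$. Since $p_t$ ranges in the compact $[0,1]$ and $\pi_0[h_{\mathsf V}(t,\cdot)]$ has the fixed law $\nu_{\mathsf V}$, the triple $\bigl(\pi_0[h_-(t,\cdot)],\pi_0[h_+(t,\cdot)],p_t\bigr)$ is tight; along a subsequence it converges to $(g_-,g_+,p)$ with $(g_-,g_+)\sim\nu_\theta$, $p\in[0,1]$, and by continuity of $(v,u,q)\mapsto\log(q\e^u+(1-q)\e^v)$ on $\mathcal C_{\mathrm{KPZ};0}^2\times[0,1]$ we obtain $\nu_{\mathsf V}=\Law\bigl(\log(p\,\e^{g_+}+(1-p)\,\e^{g_-})\bigr)$. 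Because $\nu_\theta$ is carried by $\mathcal X_0(\theta)$, one checks that $\log(p\,\e^{g_+}+(1-p)\,\e^{g_-})$ has slopes $\pm\theta$ and tends to $+\infty$ in both directions exactly when $p\in(0,1)$; as $\nu_{\mathsf V}(\mathcal V(\theta))=1$, this forces $p\in(0,1)$ almost surely.

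Now I would restart the dynamics from this representation: on a space carrying $(g_-,g_+)\sim\nu_\theta$ and $p\in(0,1)$ (coupled as above) together with an independent white noise, let $h_{\mathsf V}'$ solve \cref{eq:KPZ} from $h_{\mathsf V}'(0,\cdot)=\log(p\,\e^{g_+}+(1-p)\,\e^{g_-})\sim\nu_{\mathsf V}$, so $\pi_0[h_{\mathsf V}'(t,\cdot)]\sim\nu_{\mathsf V}$ for all $t$. Writing $h_{\mathsf V}'(0,\cdot)=V[g_-+\log(2(1-p)),\,g_++\log(2p)]$ and using that adding a constant to the initial data of \cref{eq:KPZ} merely shifts the solution by that constant, we get $h_{\mathsf V}'(t,\cdot)=V[G_-(t,\cdot)+\log(2(1-p)),\,G_+(t,\cdot)+\log(2p)]$, where $G_\pm$ solve \cref{eq:KPZ} from $G_\pm(0,\cdot)=g_\pm$. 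Since $(g_-,g_+)\sim\nu_\theta$ is independent of the new noise, \cref{prop:joint-stationarity} gives $\pi_0[(G_-,G_+)(t,\cdot)]\sim\nu_\theta$ for all $t$, and \cref{thm:zeroval-stationarity} gives $t^{-1/2}D_t\Longrightarrow\mathcal N(0,2\theta)$ for $D_t:=G_+(t,0)-G_-(t,0)$, so $|D_t|\to\infty$ in probability. A computation gives $\pi_0[h_{\mathsf V}'(t,\cdot)](x)=\log\!\bigl(r_t\,\e^{\pi_0[G_+(t,\cdot)](x)}+(1-r_t)\,\e^{\pi_0[G_-(t,\cdot)](x)}\bigr)$ with $r_t=p\,\e^{D_t}/(p\,\e^{D_t}+1-p)$, and since $p\in(0,1)$ a.s.\ the elementary bound $r_t(1-r_t)\le\max\{p/(1-p),\,(1-p)/p\}\,\e^{-|D_t|}$ shows $r_t(1-r_t)\to0$ in probability. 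Passing to a subsequential limit $\bigl(r_t,\pi_0[(G_-,G_+)(t,\cdot)]\bigr)\Longrightarrow(r_\infty,\gamma_-,\gamma_+)$ with $(\gamma_-,\gamma_+)\sim\nu_\theta$ and $r_\infty\in\{0,1\}$ a.s., the continuous mapping theorem gives $\pi_0[h_{\mathsf V}'(t,\cdot)]\Longrightarrow\mathbf{1}\{r_\infty=1\}\gamma_++\mathbf{1}\{r_\infty=0\}\gamma_-$. This limit law must equal $\nu_{\mathsf V}$ (a weakly convergent constant sequence of laws), but it charges $\mathcal V(\theta)$ with probability at most $\mu_\theta(\mathcal V(\theta))+\mu_{-\theta}(\mathcal V(\theta))=0$ (a two-sided Brownian motion with nonzero drift does not tend to $+\infty$ in both directions), whereas $\nu_{\mathsf V}(\mathcal V(\theta))=1$. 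This contradiction proves the proposition.

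I expect the main obstacle to be the identification step: deducing from \emph{mere} invariance that every $\nu_{\mathsf V}$ carried by $\mathcal V(\theta)$ must be of the form $\Law(\log(p\,\e^{g_+}+(1-p)\,\e^{g_-}))$ with $(g_-,g_+)\sim\nu_\theta$. This relies on the one-force–one-solution-type relaxation in \cref{prop:spatially-homog-convergence} (so that the auxiliary pair produced by \cref{lem:A_map}, whose initial law is uncontrolled, still relaxes to the jointly invariant $\nu_\theta$), together with some care about continuity of the nonlinear maps in the topology of $\mathcal C_{\mathrm{KPZ};0}$; the fact that $\mathcal V(\theta)$ is only Borel causes no trouble, since all the approximating laws are literally $\nu_{\mathsf V}$. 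Once the representation is available, the remaining content is precisely that $h_+(t,0)-h_-(t,0)$ must grow, which is exactly what \cref{thm:zeroval-stationarity} provides.
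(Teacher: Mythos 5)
Your proof is correct, and it takes a genuinely different route from the paper's. The paper first establishes tightness of $(h_+(U_T,0)-h_-(U_T,0))_T$ via a direct argument (if the difference escaped to $\pm\infty$ with positive probability, the V-shape at spatial infinity would fail), then invokes \cref{prop:projected-feller} and Krylov--Bogoliubov to extract an invariant measure $\psi$ for the process $\underline{\mathbf h}$ with $\pi_0$-marginal $\nu_\theta$, and finally derives a contradiction because $\psi$-stationarity would force tightness of the difference at the origin, contrary to \cref{thm:zeroval-stationarity}. You instead bypass both the tightness lemma and Krylov--Bogoliubov: you use the compactness of $[0,1]$ for $p_t$ and the convergence $\pi_0[(h_-,h_+)(t,\cdot)]\Rightarrow\nu_\theta$ to extract a representation $\nu_{\mathsf V}=\Law(\log(p\,\e^{g_+}+(1-p)\,\e^{g_-}))$, force $p\in(0,1)$ a.s.\ from the support hypothesis $\nu_{\mathsf V}(\mathcal V(\theta))=1$, restart the dynamics from this representation with fresh noise, and then use \cref{thm:zeroval-stationarity} (via the elementary bound $r_t(1-r_t)\le\max\{p/(1-p),(1-p)/p\}\,\e^{-|D_t|}$) to show that the mixture weight degenerates to $\{0,1\}$ in the limit, which is incompatible with $\nu_{\mathsf V}$ being V-shaped. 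The key quantitative input, \cref{thm:zeroval-stationarity}, is the same in both proofs, as is the use of \cref{lem:A_map}, \cref{prop:Vsolves}, \cref{prop:spatially-homog-convergence}, and \cref{prop:joint-stationarity}; but your representation-and-restart argument is more explicit and arguably lighter, trading the paper's soft ergodic machinery for the continuity of $\tilde V$ on $[0,1]\times\mathcal C_{\mathrm{KPZ};0}^2$ (which the paper also proves and uses, but in \cref{thm:whatcanyouconvergeto} rather than here). One minor caveat: your tightness remark for the triple should be credited to the $\nu_\theta$-convergence of $\pi_0[(h_-,h_+)(t,\cdot)]$ and the compactness of $[0,1]$, not to the constancy of $\Law(\pi_0[h_{\mathsf V}(t,\cdot)])$; but this does not affect the argument.
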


\begin{proof}
Suppose for the sake of contradiction that there does exist such a
measure $\nu_{\mathsf{V}}$. Define $\mathbf{h}=(h_{-},h_{+})$, and let %
$h_{-},h_{+},h_{\mathsf{V}}$ each solve \cref{eq:KPZ},
with initial conditions $h_{\mathsf{V}}\sim\nu_{\mathsf{V}}$ and
$\mathbf{h}(0,\cdot)=\mathbf{A}[h_{\mathsf{V}}(0,\cdot)]$. Here,
$\mathbf{A}$ is defined as in \cref{lem:A_map}. 
Recalling \cref{eq:VAid}, this means that $V[\mathbf{h}(0,\cdot)]=h_{\mathsf{V}}(0,\cdot)$,
and hence by \cref{prop:Vsolves} we in fact have
\begin{equation}
V[\mathbf{h}(t,\cdot)](x)=h_{\mathsf{V}}(t,x)\qquad\text{for all }t\ge0\text{ and }x\in\RR.\label{eq:Videntity}
\end{equation}
Let $U_{T}\sim\Uniform([0,T])$ be independent of everything else.
By \cref{prop:spatially-homog-convergence}, we have
\begin{equation}
\Law(\pi_{0}[\mathbf{h}(U_{T},\cdot)])\to\nu_{\theta}\qquad\text{weakly w.r.t. the topology of }\mathcal{C}_{\mathrm{KPZ};0}^{2}\text{ as }T\to\infty.\label{eq:hplushminusconverge}
\end{equation}
We now show that
\begin{equation} \label{eq:hUt_tight}
\text{for all }\eps > 0, \text{ there exists } K < \infty \text{ such that }\sup_{T\in(0,\infty)}\mathbb{P}\left(h_{+}(U_{T},0)-h_{-}(U_{T},0)>K\right)<\eps.
\end{equation}
Suppose for the sake of contradiction that there is some $\eps>0$
and a sequence $T_{k}\uparrow\infty$ such that
\begin{equation}
\inf_{k\in\NN}\mathbb{P}\left((h_{+}-h_{-})(U_{T_{k}},0)>k\right)\ge\eps.\label{eq:gtrk}
\end{equation}
Recalling \cref{eq:Videntity} and the definition \cref{eq:Vdef}
of $V$, we have
\begin{align}
  h_{\mathsf{V}}&(t,x)-h_{\mathsf{V}}(t,0)-(h_{+}(t,x)-h_{+}(t,0))\notag\\&=\log\frac{\e^{h_{+}(t,x)}+\e^{h_{-}(t,x)}}{2}-\log\frac{\e^{h_{+}(t,0)}+\e^{h_{-}(t,0)}}{2}-\left(\log\e^{h_{+}(t,x)}-\log\e^{h_{+}(t,0)}\right)\notag\\&=\log\frac{\left(\e^{h_{+}(t,x)}+\e^{h_{-}(t,x)}\right)\e^{-h_{+}(t,x)}}{\left(\e^{h_{+}(t,0)}+\e^{h_{-}(t,0)}\right)\e^{-h_{+}(t,0)}}=\log\frac{\e^{-(h_{+}-h_{-})(t,x)}+1}{\e^{-(h_{+}-h_{-})(t,0)}+1}\notag\\&=\log\frac{\e^{-\left((h_{+}-h_{-})(t,x)-(h_{+}-h_{-})(t,0)\right)}+\e^{(h_{+}-h_{-})(t,0)}}{1+\e^{(h_{+}-h_{-})(t,0)}}.\label{eq:diff}
\end{align}
For any $x\in(-\infty,0)$, the tightness implied by \cref{eq:hplushminusconverge}
means that there is some $A(x) \in (0,\infty)$ such that
\begin{equation}
\sup_{k\in\NN}\mathbb{P}\left(\left|(h_{+}-h_{-})(U_{T_{k}},x)-(h_{+}-h_{-})(U_{T_{k}},0)\right|>A(x)\right)<\frac{\eps}{4}.\label{eq:ltA}
\end{equation}
Also, by the convergence $\Law(\pi_{0}[\mathbf{h}(t,\cdot)])\to\nu_{\theta}$
in \cref{eq:hplushminusconverge} and since the second marginal
of $\nu_{\theta}$ is a Brownian motion with drift $\theta$, there
is an $M_{0}<\infty$ such that, if $x<-M_{0}$, then there is a $C(x)<\infty$
such that
\begin{equation}
\sup_{k\ge C(x)}\mathbb{P}\left(h_{+}(U_{T_{k}},x)-h_{+}(U_{T_{k}},0)>\frac{1}{2}\theta x\right)<\frac{\eps}{4}.\label{eq:ltthetax}
\end{equation}
Now combining \cref{eq:gtrk,eq:ltA,eq:ltthetax},
we see that for all $x\le-M_{0}$ and $k\ge C(x)$, with probability at least $\eps/2$, we have 
\[\begin{array}{c}(h_{+}-h_{-})(U_{T_{k}},0)>k,\qquad h_{+}(U_{T_{k}},x)-h_{+}(U_{T_{k}},0)\le\frac{1}{2}\theta x,\qquad\text{and}\\
\grave{\left|(h_{+}-h_{-})(U_{T_{k}},x)-(h_{+}-h_{-})(U_{T_{k}},0)\right|}\le A(x).\end{array}\] This means that with probability
at least $\eps/2$ we have
\begin{align*}
    h_{\mathsf{V}}(U_{T_{k}},x)-h_{\mathsf{V}}(U_{T_{k}},0) \overset{\cref{eq:diff}}&{=}h_{+}(U_{T_{k}},x)-h_{+}(U_{T_{k}},0)\\&\qquad+\log\frac{\e^{-\left((h_{+}-h_{-})(U_{T_{k}},x)-(h_{+}-h_{-})(U_{T_{k}},0)\right)}+\e^{(h_{+}-h_{-})(U_{T_{k}},0)}}{1+\e^{(h_{+}-h_{-})(U_{T_{k}},0)}}\\
 & \le\frac{1}{2}\theta x+\log\frac{\e^{A(x)}+\e^{k}}{1+\e^{k}}.
\end{align*}
In the last inequality, we have used the fact that the function $y \mapsto \log \frac{e^A + y}{1 + y}$ is decreasing for positive $y$ as long as $A > 0$.  
By taking $k$ sufficiently large, we can make this last term as small
as we like, so we conclude that, for each $x\le-M_{0}$, there is
a $C'(x)>0$ such that
\[
\sup_{k\ge C'(x)}\mathbb{P}\left(h_{\mathsf{V}}(U_{T_{k}},x)-h_{\mathsf{V}}(U_{T_{k}},0)\le0\right)\ge\frac{\eps}{2}.
\]
But by the assumed stationarity of $h_{\mathsf{V}}$, the law of $h_{\mathsf{V}}(U_{T_{k}},x)-h_{\mathsf{V}}(U_{T_{k}},0)$
does not depend on $k$, so in fact we have
\[
\mathbb{P}\left(h_{\mathsf{V}}(U_{T_{k}},x)-h_{\mathsf{V}}(U_{T_{k}},0)\le0\right)\ge\frac{\eps}{2}\qquad\text{for all }x\le-M_{0}.
\]
This contradicts the fact that $h_{\mathsf{V}}(0,\cdot)\in\mathcal{V}(\theta)$
a.s., since the latter implies that
\[
\lim_{x\to-\infty}h_{\mathsf{V}}(U_{T_{k}},x)=+\infty\qquad\text{a.s.}
\]
Therefore, we have shown \eqref{eq:hUt_tight}. A similar argument works for $h_{-}(U_{T},0)-h_{+}(U_{T},0)$, so
in fact we have
\[
\sup_{T\in(0,\infty)}\mathbb{P}\left(\left|h_{+}(U_{T},0)-h_{-}(U_{T},0)\right|>K\right)<\eps,
\]
and hence that the family of random variables $(h_{+}(U_{T},0)-h_{-}(U_{T},0))_{T}$
is tight.

Combined with \cref{eq:Videntity,eq:hplushminusconverge},
this implies that if we define
\[
J_{t}\coloneqq\frac{1}{2}\left(h_{+}(t,0)+h_{-}(t,0)\right)
\]
and 
\[
\underline{\mathbf{h}}(t,x)=\mathbf{h}(t,x)-(J_{t},J_{t}),
\]
then the family of random variables $(\underline{\mathbf{h}}(U_{T},\cdot))_{T}$
is also tight in the topology of $\mathcal{C}_{\mathrm{KPZ}}^{2}$.
Hence, there is a sequence $T_{k}\uparrow\infty$ and a measure $\psi$
on $\mathcal{C}_{\mathrm{KPZ}}^{2}$ such that 
\begin{equation}
\lim_{k\to\infty}\Law(\underline{\mathbf{h}}(U_{T_k},\cdot))=\psi\label{eq:hunderlinesconverging}
\end{equation}
weakly. Now the process $(\underline{\mathbf{h}}(t,\cdot))_{t}$ is
a Markov process with the Feller property by \cref{prop:projected-feller}. Specifically, we apply the proposition with the linear operator $g:\mathcal{C}_{\mathrm{KPZ}}^{2}\to\RR^{2}$ defined by $g[f_1,f_2] = \Bigl(\frac{1}{2}(f_2(0) - f_1(0)),\frac{1}{2}(f_2(0) - f_1(0))\Bigr)$.
Thus we can apply the Krylov--Bogoliubov theorem (see e.g.~\cite[Theorem 3.1.1]{Da-Prato-Zabczyk-1996})
to conclude that, if $\tilde{\mathbf{h}}=(\tilde{h}_{-},\tilde{h}_{+})$
is a vector of solutions to \cref{eq:KPZ} with initial data
$\tilde{\mathbf{h}}(0,\cdot)\sim\psi$, and we define
\[
\tilde{J}_{t}\coloneqq\frac{1}{2}\left(\tilde{h}_{+}(t,0)+\tilde{h}_{-}(t,0)\right)
\]
and
\[
\underline{\tilde{\mathbf{h}}}(t,x)\coloneqq\tilde{\mathbf{h}}(t,x)-(\tilde{J}_{t},\tilde{J}_t),
\]
then $\underline{\tilde{\mathbf{h}}}(t,\cdot)\sim\psi$ for each
$t\ge0$. In particular, $(\tilde{h}_{+}(t,0)-\tilde{h}_{-}(t,0))$
is a tight family of random variables. But \cref{eq:hplushminusconverge,eq:hunderlinesconverging} imply that $\underline{\tilde{\mathbf{h}}}(0,\cdot)-\underline{\tilde{\mathbf{h}}}(0,0)\sim\nu_{\theta}$,
and then \cref{thm:zeroval-stationarity} implies that the family
of random variables $(\tilde{h}_{+}(t,0)-\tilde{h}_{-}(t,0))_{t}$
is not tight, which is a contradiction.
\end{proof}
Using the tools developed in this section, we can also prove \cref{thm:whatcanyouconvergeto}.
\begin{proof}[Proof of \cref{thm:whatcanyouconvergeto}]
Let $h_{-}$ and $h_{+}$ be solutions to \cref{eq:KPZ} with
initial condition $(h_{-},h_{+})(0,\cdot)=\mathbf{A}[h_{\mathsf{V}}(0,\cdot)]$,
with $\mathbf{A}$ defined as in \cref{lem:A_map}. By \cref{eq:VAid,prop:Vsolves} similarly to as in the proof of \cref{prop:noVshaped},
we see that $h_{\mathsf{V}}(t,\cdot)=V[(h_{-},h_{+})(t,\cdot)]$ for
all $t\ge0$.
We see that $(\pi_{0}[(h_{-},h_{+})(t,\cdot)])_{t \ge 0}$
is a tight family of random variables in $\mathcal{C}_{\mathrm{KPZ}}^{2}$
by \cref{prop:spatially-homog-convergence}. Now we note that
\begin{align*}
    \pi_{0}[h_{\mathsf{V}}(t,\cdot)](x)=h_{\mathsf{V}}(t,x)-h_{\mathsf{V}}(t,0)  \overset{\cref{eq:Vdef}}&{=}\log\frac{\e^{h_{-}(t,x)}+\e^{h_{+}(t,x)}}{\e^{h_{-}(t,0)}+\e^{h_{+}(t,0)}}\\
 & =\log\frac{\e^{h_{-}(t,x)-h_{-}(t,0)}+\e^{h_{+}(t,x)-h_{+}(t,0)+h_{+}(t,0)-h_{-}(t,0)}}{1+\e^{h_{+}(t,0)-h_{-}(t,0)}}.
\end{align*}
This implies that, if we define the map $\tilde{V}\colon[0,1]\times\mathcal{C}_{\mathrm{KPZ}}^{2}\to\mathcal{C}_{\mathrm{KPZ}}$
by
\begin{equation}
    \tilde{V}[\xi,f_{-},f_{+}]\coloneqq\log(\xi\e^{f_{-}}+(1-\xi)\e^{f_{+}})\label{eq:tildeVdef}
\end{equation}
(which generalizes the map $V$ since $V=\tilde{V}[1/2,\cdot,\cdot]$),
then
\begin{equation}
    \pi_{0}[h_{\mathsf{V}}(t,\cdot)]=\tilde{V}[(1+\e^{h_{+}(t,0)-h_{-}(t,0)})^{-1},\pi_{0}[(h_{-},h_{+})(t,\cdot)]].\label{eq:tildeVreln}
\end{equation}
Now the map $\tilde{V}$ is continuous, so since $(\pi_{0}[(h_{-},h_{+})(t,\cdot)])_{t}$
is tight and $[0,1]$ is compact, we can conclude that $(\pi_{0}[h_{\mathsf{V}}(t,\cdot)])_{t}$
is tight as well, and thus complete the proof of part~\ref{enu:Vtight}.

Now we proceed to part~\ref{enu:convtomixture}. Let $U_T\sim\Uniform([0,T])$ be independent of everything else. A Krylov--Bogoliubov
argument shows that any subsequential limit $m$ of $\pi_{0}[h_{\mathsf{V}}(U_T,\cdot)]$ is an invariant measure for the spatial increments
of the KPZ equation. The ergodic decomposition theorem and the characterization
of the extremal invariant measures given in \cref{cor:classification}
mean that there is some probability measure $\eta$ on $\RR$
such that $m=\int\mu_{\rho}\,\dif\eta(\rho)$. We claim that $\eta$
is a linear combination of the point masses $\delta_{-\theta}$ and
$\delta_{\theta}$. Suppose not, so there is a probability measure
$\eta'$ and a $\kappa>0$ such that $\{\pm\theta\}\cap\supp\eta'=\varnothing$
and
\begin{equation}
\eta-\kappa\eta'\text{ is a (nonnegative) measure}.\label{eq:partof}
\end{equation}
This implies that there is an $\eps\in(0,\kappa)$ and an $M<\infty$
such that, if $|x|>M$, $(f_{-},f_{+})\sim\nu_{\theta}$, and $g\sim m$,
then
\begin{equation}
\mathbb{P}\left(\left|\frac{g(x)}{x}+\theta\right|\wedge\left|\frac{g(x)}{x}-\theta\right|>\eps\right)>\frac{\kappa}{2}\label{eq:slopesfar}
\end{equation}
and
\begin{equation}
\mathbb{P}\left(\left|\frac{f_{+}(x)}{x}-\theta\right|\vee\left|\frac{f_{-}(x)}{x}+\theta\right|>\frac{\eps}{8}\right)<\frac{\eps}{8}.\label{eq:slopesclose}
\end{equation}
Fix 
\begin{equation}
x=\frac{4\log2}{\eps},\label{eq:xchoice}
\end{equation}
assuming that $\eps$ is sufficiently small to guarantee $|x| > M$.
Now \cref{eq:lawsconvtom,eq:slopesfar} imply
that, if $k$ is chosen sufficiently large, then 
\begin{equation}
\mathbb{P}\left(\left|\frac{h_{\mathsf{V}}(U_{T_{k}},x)-h_{\mathsf{V}}(U_{T_{k}},0)}{x}-\theta\right|\wedge\left|\frac{h_{\mathsf{V}}(U_{T_{k}},-x)-h_{\mathsf{V}}(U_{T_{k}},0)}{-x}+\theta\right|>\frac{\eps}{2}\right)>\frac{\kappa}{2}.\label{eq:goodchanceofwrongslope}
\end{equation}
On the other hand, we have by \cref{prop:spatially-homog-convergence,eq:slopesclose} that, if $k$ is sufficiently large,
then
\begin{equation}
\mathbb{P}\left(\left|\frac{h_{+}(U_{T_{k}},x)-h_{+}(U_{T_{k}},0)}{x}-\theta\right|>\frac{\eps}{4}\right)<\frac{\eps}{4}<\frac{\kappa}{4}\label{eq:hplusslopebd}
\end{equation}
and
\begin{equation}
\mathbb{P}\left(\left|\frac{h_{-}(U_{T_{k}},-x)-h_{-}(U_{T_{k}},0)}{-x}+\theta\right|>\frac{\eps}{4}\right)<\frac{\eps}{4}<\frac{\kappa}{4}\label{eq:hminusslopebd}
\end{equation}
(with the latter inequalities because we assumed that $\eps<\kappa$). Now, continuing from \cref{eq:goodchanceofwrongslope}, we can
write
\begin{align}
\frac{\kappa}{2} & <\mathbb{P}\left(\left|\frac{h_{\mathsf{V}}(U_{T_{k}},x)-h_{\mathsf{V}}(U_{T_{k}},0)}{x}-\theta\right|\wedge\left|\frac{h_{\mathsf{V}}(U_{T_{k}},-x)-h_{\mathsf{V}}(U_{T_{k}},0)}{-x}+\theta\right|>\frac{\eps}{2}\right)\nonumber \\
 & \le\mathbb{P}\left(h_{+}(U_{T_{k}},0)\ge h_{-}(U_{T_{k}},0)\text{ and }\left|\frac{h_{\mathsf{V}}(U_{T_{k}},x)-h_{\mathsf{V}}(U_{T_{k}},0)}{x}-\theta\right|>\frac{\eps}{2}\right)\nonumber \\
 & \qquad+\mathbb{P}\left(h_{+}(U_{T_{k}},0)\le h_{-}(U_{T_{k}},0)\text{ and }\left|\frac{h_{\mathsf{V}}(U_{T_{k}},-x)-h_{\mathsf{V}}(U_{T_{k}},0)}{-x}+\theta\right|>\frac{\eps}{2}\right).\label{eq:splitbyzeroval}
\end{align}
If $h_{+}(U_{T_{k}},0)\ge h_{-}(U_{T_{k}},0)$ and $\left|\frac{h_{\mathsf{V}}(U_{T_{k}},x)-h_{\mathsf{V}}(U_{T_{k}},0)}{x}-\theta\right|>\eps/2$,
then by \cref{lem:Vclosetooneofthem}(\ref{enu:fplusgtr}), we
have 
\[
\left|\frac{h_{+}(U_{T_{k}},x)-h_{+}(U_{T_{k}},0)}{x}-\theta\right|>\frac{\eps}{2}-\frac{\log2}{x}\overset{\cref{eq:xchoice}}{=}\frac{\eps}{4}.
\]
The hypothesis that $(h_-,h_+)(U_{T_k},\cdot)\in\mathcal{X}(\theta)$ is satisfied by \cref{prop:Xpreserved}, since \cref{lem:A_map} tells us that $(h_-,h_+)(0,\cdot)\in\mathcal{X}(\theta)$ almost surely.
On the other hand, we similarly observe that if $h_{+}(U_{T_{k}},0)\le h_{-}(U_{T_{k}},0)$ and $\left|\frac{h_{\mathsf{V}}(U_{T_{k}},-x)-h_{\mathsf{V}}(U_{T_{k}},0)}{-x}+\theta\right|>\eps/2$,
then by \cref{lem:Vclosetooneofthem}(\ref{enu:fminusgtr}), we
have 
\[
\left|\frac{h_{-}(U_{T_{k}},-x)-h_{+}(U_{T_{k}},0)}{-x}+\theta\right|>\frac{\eps}{4}.
\]
Using these observations in \cref{eq:splitbyzeroval}, we obtain
\begin{align*}
\frac{\kappa}{2} & <\mathbb{P}\left(\left|\frac{h_{+}(U_{T_{k}},x)-h_{+}(U_{T_{k}},0)}{x}-\theta\right|>\frac{\eps}{4}\right)+\mathbb{P}\left(\left|\frac{h_{-}(U_{T_{k}},-x)-h_{+}(U_{T_{k}},0)}{-x}+\theta\right|>\frac{\eps}{4}\right)<\frac{\kappa}{2},
\end{align*}
with the last inequality by \cref{eq:hplusslopebd,eq:hminusslopebd}.
But this is a contradiction, and so the proof is complete.
\end{proof}

Finally, we prove \cref{thm:asconvergence} in a similar way.
\begin{proof}[Proof of \cref{thm:asconvergence}]
    Let $\mathbf{f} = \mathbf{A}[f_{\mathsf{V}}]$, and then let $\mathbf{h}^T = (h^T_-,h^T_+)$ solve \cref{eq:KPZ} with initial condition $\mathbf{h}^T(-T,\cdot)=\mathbf{f}$. By \cref{eq:VAid,prop:Vsolves}, this means that  $h_{\mathsf{V}}^T(0,\cdot) = V[\mathbf{h}^T(0,\cdot)]$. Defining $\tilde V$ as in \cref{eq:tildeVdef}, we have in the same way as \cref{eq:tildeVreln} that $\pi_0[h^T_{\mathsf{V}}(0,\cdot)] = \tilde V[(1+\e^{h^T_+(0,0)-h^T_-(0,0)})^{-1},\pi_0[\mathbf{h}^T(0,\cdot)]]$. By \cref{prop:spatially-homog-convergence}, we have $\lim\limits_{T\to\infty} \pi_0[\mathbf{h}^T(0,\cdot)]=\overline{\mathbf{f}}$ almost surely. Now for any sequence $T_k\uparrow\infty$, we can find a subsequence $T_{k_\ell}\uparrow\infty$ such that $\xi\coloneqq\lim\limits_{\ell\to\infty} (1+\e^{h_+^T(0,0)-h_-^T(0,0)})^{-1}$ exists, and then \cref{eq:asconvergence} follows from the continuity of $\tilde V$.
\end{proof}

\section{Fluctuations of the shock location}\label{sec:btflucts}

To complete the proof of \cref{thm:btflucts}, we need to relate
the statistics of $b_{t}$ to the statistics $h_{+}(t,0)-h_{-}(t,0)$
that have been studied in \cref{sec:zerovalflucts}. The fact
that $h_{+}(t,0)-h_{-}(t,0)$ is asymptotically linear with slope
$2\theta$ means that these quantities should be approximately related.

\subsection{Using the asymptotic slope}

The following lemma will help us make this intuition precise. In the
application, we will take $\mathcal{J}(t,x)=h_{+}(t,x)-h_{-}(t,x)$. 
\begin{lem}
\label{lem:h_to_b}Fix $\theta>0$. Let $\{\mathcal{J}(t,x)\st t\ge0,x\in\RR\}$
be a real-valued stochastic process such that the following hold.
\begin{enumerate}
\item \label{enu:cont}For each fixed $t\ge0$, with probability $1$, $x\mapsto\mathcal{J}(t,x)$
is continuous and strictly increasing.
\item \label{enu:drift}For each fixed $t\ge0$, $\lim\limits_{|x|\to\infty}\frac{\mathcal{J}(t,x)}{x}=2\theta$.
In particular, $\lim\limits_{x\to\pm\infty}\mathcal{J}(t,x)=\pm\infty$,
which together with Assumption~\ref{enu:cont} means that $x\mapsto\mathcal{J}(t,x)$
is a bijection $\RR\to\RR$.
\item \label{enu:dist}For some exponent $\alpha>0$, $t^{-\alpha}\mathcal{J}(t,0)$
converges in distribution to an almost-surely finite random variable
$Y$.
\item \label{enu:to0}Given the exponent $\alpha$ from Assumption~\ref{enu:dist},
for each $t\ge0$ and $\eps\in(0,2\theta)$, the random variable $t^{-\alpha}M_{t,\eps,\theta}$
converges to $0$ in probability, where
\begin{equation}
M_{t,\eps,\theta}\coloneqq\sup_{x\in\RR}\left[\left|\mathcal{J}(t,x)-\mathcal{J}(t,0)-2\theta x\right|-\eps|x|\right].\label{eq:Mdef}
\end{equation}
 Note that $M_{t,\eps,\theta}$ is almost-surely finite by Assumption~\ref{enu:drift}.
\end{enumerate}
Now let $b_{t}$ be the unique $x\in\RR$ such that $\mathcal{J}(t,x)=0$.
Then, as $t\to\infty$, $t^{-\alpha}b_{t}$ converges in distribution
to $-\frac{Y}{2\theta}$.
\end{lem}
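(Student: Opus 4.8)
The plan is to show that $t^{-\alpha}b_{t}$ and $-t^{-\alpha}\mathcal{J}(t,0)/(2\theta)$ differ by a quantity tending to $0$ in probability, and then to conclude with Slutsky's theorem together with Assumption~\ref{enu:dist} (and the continuous mapping theorem, which gives that $-t^{-\alpha}\mathcal{J}(t,0)/(2\theta)$ converges in distribution to $-Y/(2\theta)$). First we note that the existence and uniqueness of $b_{t}$ are immediate: by Assumptions~\ref{enu:cont} and~\ref{enu:drift}, $x\mapsto\mathcal{J}(t,x)$ is a continuous, strictly increasing bijection of $\RR$, hence has a unique zero.

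The core estimate comes from evaluating the supremum in the definition \cref{eq:Mdef} of $M_{t,\eps,\theta}$ at $x=b_{t}$ and using $\mathcal{J}(t,b_{t})=0$: for any $\eps\in(0,2\theta)$,
\[
\left|2\theta b_{t}+\mathcal{J}(t,0)\right|=\left|\mathcal{J}(t,b_{t})-\mathcal{J}(t,0)-2\theta b_{t}\right|\le M_{t,\eps,\theta}+\eps|b_{t}|.
\]
One consequence, by the triangle inequality and the fact that $2\theta-\eps>0$, is the a priori bound $|b_{t}|\le(M_{t,\eps,\theta}+|\mathcal{J}(t,0)|)/(2\theta-\eps)$. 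Substituting this back into the previous display yields
\[
\left|b_{t}+\frac{\mathcal{J}(t,0)}{2\theta}\right|\le\frac{M_{t,\eps,\theta}}{2\theta}+\frac{\eps}{2\theta(2\theta-\eps)}\bigl(M_{t,\eps,\theta}+|\mathcal{J}(t,0)|\bigr).
\]

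Next we multiply by $t^{-\alpha}$ and let $t\to\infty$. By Assumption~\ref{enu:to0}, $t^{-\alpha}M_{t,\eps,\theta}\to0$ in probability for each fixed $\eps$, while Assumption~\ref{enu:dist} implies that $(t^{-\alpha}\mathcal{J}(t,0))_{t}$ is a tight family for large $t$. Given $\delta,\eta>0$, we would first pick $R<\infty$ and $T_{1}$ with $\sup_{t\ge T_{1}}\PP(t^{-\alpha}|\mathcal{J}(t,0)|>R)<\eta/2$, then choose $\eps\in(0,\theta)$ small enough that $\frac{\eps}{2\theta(2\theta-\eps)}R<\delta/3$; with $\eps$ now fixed, Assumption~\ref{enu:to0} lets us choose $T_{2}$ so that with probability at least $1-\eta/2$ the quantity $t^{-\alpha}M_{t,\eps,\theta}$ is below a threshold making the remaining terms total at most $2\delta/3$, for all $t\ge T_{2}$. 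Combining, $\PP(t^{-\alpha}|b_{t}+\mathcal{J}(t,0)/(2\theta)|>\delta)<\eta$ for all $t\ge T_{1}\vee T_{2}$; since $\delta,\eta$ were arbitrary, $t^{-\alpha}(b_{t}+\mathcal{J}(t,0)/(2\theta))\to0$ in probability, and Slutsky's theorem completes the proof.

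The mildly delicate point — the main obstacle — is the order of limits: Assumption~\ref{enu:to0} controls $M_{t,\eps,\theta}$ only for each fixed $\eps$, yet the slope-error term $\eps|x|$ near $x=b_{t}$ is not negligible unless $\eps\to0$. The a priori bound on $|b_{t}|$ obtained above, combined with the tightness of $t^{-\alpha}\mathcal{J}(t,0)$, is exactly what permits sending $\eps\to0$ before $t\to\infty$.
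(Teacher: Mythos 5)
Your proof is correct and follows essentially the same approach as the paper's: both use the definition of $M_{t,\eps,\theta}$ together with tightness of $t^{-\alpha}\mathcal{J}(t,0)$ to relate $t^{-\alpha}b_t$ to $-t^{-\alpha}\mathcal{J}(t,0)/(2\theta)$, sending $\eps\downarrow 0$ after $t\to\infty$. Your derivation is a bit slicker than the paper's---evaluating the supremum defining $M_{t,\eps,\theta}$ directly at $x=b_t$ avoids the paper's case split on the sign of $\mathcal{J}(t,0)$, and concluding via convergence in probability plus Slutsky is more direct than the paper's tightness-and-subsequential-limits argument.
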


\begin{proof}
Let $\eps\in(0,2\theta)$. By the definition of $M_{t,\eps,\theta}$,
we have 
\begin{align}
-M_{t,\eps,\theta}+(2\theta-\eps)x & \le\mathcal{J}(t,x)-\mathcal{J}(t,0)\le M_{t,\eps,\theta}+(2\theta+\eps)x,\qquad x\ge0;\label{eq:positivepart}\\
-M_{t,\eps,\theta}+(2\theta+\eps)x & \le\mathcal{J}(t,x)-\mathcal{J}(t,0)\le M_{t,\eps,\theta}+(2\theta-\eps)x,\qquad x\le0.\label{eq:negativepart}
\end{align}
We consider two cases. If $\mathcal{J}(t,0)<0$, then since $x\mapsto\mathcal{J}(t,x)$
is strictly increasing, we have $b_{t}>0$. By \cref{eq:positivepart},
this means that 
\[
-M_{t,\eps,\theta}+(2\theta-\eps)b_{t}\le-\mathcal{J}(t,0)\le M_{t,\eps,\theta}+(2\theta+\eps)b_{t},
\]
and so 
\[
\frac{-M_{t,\eps,\theta}-\mathcal{J}(t,0)}{2\theta+\eps}\le b_{t}\le\frac{M_{t,\eps,\theta}-\mathcal{J}(t,0)}{2\theta-\eps}.
\]
Similarly, if $\mathcal{J}(t,0)>0$, then we have
\[
\frac{-M_{t,\eps,\theta}-\mathcal{J}(t,0)}{2\theta-\eps}\le b_{t}\le\frac{M_{t,\eps,\theta}-\mathcal{J}(t,0)}{2\theta+\eps}.
\]
Thus, in either case, we have
\begin{equation}
\frac{-M_{t,\eps,\theta}}{2\theta+\eps}+\min\left\{ \frac{-\mathcal{J}(t,0)}{2\theta-\eps},\frac{-\mathcal{J}(t,0)}{2\theta+\eps}\right\} \le b_{t}\le\frac{M_{t,\eps,\theta}}{2\theta-\eps}+\max\left\{ \frac{-\mathcal{J}(t,0)}{2\theta-\eps},\frac{-\mathcal{J}(t,0)}{2\theta+\eps}\right\} .\label{eq:btbd}
\end{equation}
Now Assumption~\ref{enu:to0} states that $t^{-\alpha}M_{t,\eps,\theta}$
converges to $0$ in probability for each fixed $\eps$, and Assumption~\ref{enu:dist}
states that $t^{-\alpha}\mathcal{J}(t,0)$ converges in distribution
to $Y$. Using these assumptions in \cref{eq:btbd}, we see that
the family of random variables $(t^{-\alpha}b_{t})_{t\ge1}$ is tight,
and for each $\eps>0$, any subsequential limit $\overline{Y}$ must
be stochastically bounded above and below by $\min\left\{ \frac{-Y}{2\theta-\eps},\frac{-Y}{2\theta+\eps}\right\} $
and $\max\left\{ \frac{-Y}{2\theta-\eps},\frac{-Y}{2\theta+\eps}\right\} $,
respectively. Letting $\eps\downarrow0$, we obtain the claimed convergence
in distribution.
\end{proof}
We now use \cref{lem:h_to_b} to prove part~\ref{enu:stationaryic},
and complete the proof of part~\ref{enu:tiltedic}, of \cref{thm:btflucts}.
\begin{proof}[Proof of \cref{eq:stationary-bt,eq:shockreferenceframe-bt}]
We apply \cref{lem:h_to_b} with $\mathcal{J}(t,x)=h_{+}(t,x)-h_{-}(t,x)$.
We simply need to check the assumptions. Assumptions~\ref{enu:cont}
and~\ref{enu:drift} are verified in each case by \cref{eq:alwaysinX0}
(which holds for $\hat{\nu}_{\theta}$ as well by absolute continuity)
and \cref{prop:Xpreserved}. Assumption~\ref{enu:dist} is proved
in the two cases
by \cref{thm:zeroval-stationarity} and \cref{eq:shockreferenceframe-zerodiff}, with $\alpha=1/2$ and $Y\sim\mathcal{N}(0,2\theta)$ in both cases.

We now verify Assumption~\ref{enu:to0}. In the $\nu_{\theta}$ case,
the joint stationarity in \cref{prop:joint-stationarity}
shows that the law of $M_{t,\eps,\theta}$ does not depend on $t$, and hence
\begin{equation}
t^{-1/2}M_{t,\eps,\theta}\to0\qquad\text{in probability as }t\to\infty.\label{eq:convinprob}
\end{equation}
For the $\hat{\nu}_{\theta}$ case, we use the $\nu_{\theta}$ case
and the Cauchy--Schwarz inequality. Let $\EE$ and $\hat{\EE}$
denote expectation under which $\mathbf{h}(0,\cdot)$ is distributed
according to $\nu_{\theta}$ and $\hat{\nu}_{\theta}$, respectively,
independent from the noise. Define the Radon--Nikodym derivative
$R$ as in \cref{eq:Rdef}. Then we have, for any $\delta>0$,
that
\[
\hat{\EE}[\mathbf{1}\{M_{t,\eps,\theta}>\delta t^{1/2}\}]=\EE[\mathbf{1}\{M_{t,\eps,\theta}>\delta t^{1/2}\}R]\le\left(\EE[\mathbf{1}\{M_{t,\eps,\theta}>\delta t^{1/2}\}]\right)^{1/2}\left(\EE[R^{2}]\right)^{1/2}.
\]
The right side goes to zero as $t\to \infty$ by \cref{eq:convinprob} and
the fact that $R$ is a multiple of a Gamma-distributed random variable (as noted after \cref{eq:Rdef}), which has finite second moment. 
\end{proof}

\subsection{Flat initial data}

The proof of \cref{thm:btflucts}(\ref{enu:flatic}) is more
technical than \cref{thm:btflucts}(\ref{enu:stationaryic}), since understanding the dependence of the law of $M_{t,\eps,\theta}$
on $t$ is much less trivial. We make the definition
\[
\mathcal{H}_{t}(x\viiva y)=\frac{\log Z(t,t^{2/3}x\viiva0,t^{2/3}y)+\frac{t}{24}}{t^{1/3}}
\]
and set $\mathcal{H}_{t}(x)=\mathcal{H}_{t}(x\viiva0)$. We also define
$H_{t}(x\viiva y)=\mathcal{H}_{t}(x\viiva y)+\frac{(x-y)^{2}}{2}$ and
$H_{t}(x)=H_{t}(x\viiva0)=\mathcal{H}_{t}(x)+\frac{x^{2}}{2}$. We first
recall a tail bound on the one-point statistics of $\mathcal{H}_{t}(x\viiva y)$.
\begin{lem}
\label{lem:onept}There exist constants $C<\infty$ and $c>0$ such
that, for all $y\in\RR$, $t\ge1$, $x,y\in\RR$, and $m\ge0$, we
have
\begin{equation}
\mathbb{P}\left(\left|H_{t}(x\viiva y)\right|>m\right)\le C\e^{-cm^{3/2}}.\label{eq:tailbound-anywhere}
\end{equation}
\end{lem}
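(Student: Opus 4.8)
The plan is to reduce, via the scaling invariances of $Z$, to a one-point tail bound for the narrow-wedge solution, and then to invoke standard uniform-in-$t$ tail estimates for that quantity.

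\emph{Step 1 (reduction via invariances).} I would first show that, for every $x,y\in\RR$ and $t>0$,
\begin{equation}
H_{t}(x\viiva y)\overset{\mathrm{law}}{=}\mathcal{H}_{t}(0)=\frac{\log Z(t,0\viiva0,0)+\frac{t}{24}}{t^{1/3}},\label{eq:onept-reduction}
\end{equation}
so that in particular the law of the left-hand side does not depend on $x$ or $y$. By the shift invariance \cref{eq:Z_shift} (with $u=0$ and $z=-t^{2/3}y$) we have $Z(t,t^{2/3}x\viiva0,t^{2/3}y)\overset{\mathrm{law}}{=}Z(t,t^{2/3}(x-y)\viiva0,0)$, and by the shear invariance \cref{eq:Z_shear} (with $r=0$ and $\nu=t^{-1/3}(x-y)$) we have $\e^{t^{1/3}(x-y)^{2}/2}\,Z(t,t^{2/3}(x-y)\viiva0,0)\overset{\mathrm{law}}{=}Z(t,0\viiva0,0)$. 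Combining these and taking logarithms gives $\log Z(t,t^{2/3}x\viiva0,t^{2/3}y)\overset{\mathrm{law}}{=}\log Z(t,0\viiva0,0)-\tfrac{1}{2}t^{1/3}(x-y)^{2}$; dividing by $t^{1/3}$, adding $t/(24t^{1/3})$, and then adding $\tfrac12(x-y)^{2}$ yields \cref{eq:onept-reduction}. Thanks to \cref{eq:onept-reduction}, it suffices to prove that $\PP(|\mathcal{H}_{t}(0)|>m)\le C\e^{-cm^{3/2}}$ for all $t\ge1$ and $m\ge0$.

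\emph{Step 2 (the one-point estimate).} For $m$ in any fixed bounded range the claimed bound holds vacuously once $C$ is taken large, so only large $m$ is at issue. The cleanest route is to cite a ready-made uniform-in-$t$ two-sided one-point tail bound for the narrow-wedge solution of the KPZ equation; such estimates, of the form $\PP(\mathcal{H}_{t}(0)\ge m)\le C\e^{-cm^{3/2}}$ and $\PP(\mathcal{H}_{t}(0)\le-m)\le C\e^{-cm^{3/2}}$ for $t\ge1$, are by now standard in the KPZ literature (the lower tail in fact decays much faster, on the order of $\e^{-cm^{3}}$). Alternatively, one can make the upper tail self-contained: using the moment bound $\EE[Z(t,0\viiva0,0)^{n}]\le(2\pi t)^{-n/2}\e^{Cn^{3}t}$ (standard moment estimates for the stochastic heat equation, cf.~\cite{Khoshnevisan-2014}), a Chernoff bound gives, for $t\ge1$ and every integer $n\ge1$,
\[
\PP(\mathcal{H}_{t}(0)>m)=\PP\!\left(\log Z(t,0\viiva0,0)>mt^{1/3}-\tfrac{t}{24}\right)\le\e^{-n(mt^{1/3}-t/24)+Cn^{3}t},
\]
and optimizing over $n$ (of order $\sqrt{m}\,t^{-1/3}$) yields $\PP(\mathcal{H}_{t}(0)>m)\le C\e^{-cm^{3/2}}$ once $m\ge C_{0}t^{2/3}$ for a suitable constant $C_{0}$; for the complementary range $1\le m<C_{0}t^{2/3}$ -- in which $t$ is necessarily large -- one invokes the known uniform upper-tail bound for $\mathcal{H}_{t}(0)$ in the moderate-deviation regime, and similarly for the lower tail. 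In either approach, combining the upper and lower tails and relabelling the constants completes the proof.

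\emph{Main obstacle.} The reduction in Step~1 is just bookkeeping with the stated invariances, and the moment argument disposes of $m\ge C_0 t^{2/3}$ on its own; the only genuinely non-soft input is a uniform-in-$t$ one-point tail bound for the narrow-wedge solution in the moderate-deviation regime $1\ll m\ll t^{2/3}$, which one takes from the integrable-probability literature on KPZ one-point fluctuations.
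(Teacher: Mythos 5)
Your proof is correct and takes essentially the same approach as the paper: first reduce via the shift and shear invariances \cref{eq:Z_shift,eq:Z_shear} to the identity in law $H_{t}(x\viiva y)\overset{\mathrm{law}}{=}\mathcal{H}_{t}(0)$, then invoke a uniform-in-$t$ one-point tail bound for the narrow-wedge KPZ solution (the paper cites \cite{Corwin-Ghosal-2020} and \cite{Corwin-Ghosal-2020b} for this). Your explicit choices of the invariance parameters check out, and your observation that the self-contained Chernoff/moment argument only handles the large-deviation regime $m\gtrsim t^{2/3}$ -- leaving the moderate-deviation regime to the integrable-probability estimates -- is accurate and is precisely why the paper takes the estimate wholesale from the cited references.
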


\begin{proof}
By \cite[Theorem 1.11]{Corwin-Ghosal-2020} and \cite[Theorem 1.1]{Corwin-Ghosal-2020b},
we have constants $C<\infty$ and $c>0$ such that
\begin{equation}
\mathbb{P}\left(|\mathcal{H}_{t}(0)|>m\right)\le C\e^{-cm^{3/2}}\label{eq:tailbound-singlepoint}
\end{equation}
for all $m\ge0$. (In fact, the lower tail of $\mathcal{H}_{t}(0)$
is steeper, but we do not need this. ) Using the translation invariance
\cref{eq:Z_shift} and shear invariance \cref{eq:Z_shear}
of $Z$, we see that
\[
\mathcal{H}_{t}(0)\overset{\mathrm{law}}{=}\mathcal{H}_{t}(x\viiva y)+\frac{(x-y)^{2}}{2}=H_{t}(x\viiva y)\qquad\text{for all }x,y\in\RR,
\]
and hence \cref{eq:tailbound-singlepoint} becomes \cref{eq:tailbound-anywhere}.
\end{proof}
We will need the following result on the increments of $H_{t}(x)$. 
\begin{lem}[\cite{Corwin-Ghosal-Hammond-21}, Theorem 1.3]
\label{lem:CGH_sup}There exist constants $c>0$ and $C<\infty$
such that, for all $y\in\RR$, $t\ge1$, $m\ge0$, and $\eps\in(0,1]$,
we have
\[
\mathbb{P}\left(\sup_{x\in[y,y+\eps]}\left|H_{t}(x)-H_{t}(y)\right|\ge\eps^{1/2}m\right)\le C\e^{-cm^{3/2}}.
\]
\end{lem}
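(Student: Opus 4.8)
The plan is to derive this from the parabolic modulus-of-continuity estimates for the narrow-wedge solution of the KPZ equation established in \cite{Corwin-Ghosal-Hammond-21}, after first reducing the statement to the base point $y=0$ by means of the exact symmetries of $Z$. The key observation for the reduction is that, for each fixed $t$, the process $\{H_t(x)\}_{x\in\RR}$ is spatially stationary in law. This follows from the shear invariance \cref{eq:Z_shear}: taking $s=y=r=0$ and shear parameter $\nu=wt^{-1/3}$ there, and substituting $x=t^{2/3}u$, one obtains, as processes in $u$,
\[
Z(t,t^{2/3}(u+w)\viiva 0,0)\overset{\mathrm{law}}{=}\e^{-wt^{1/3}u-w^2t^{1/3}/2}\,Z(t,t^{2/3}u\viiva 0,0);
\]
taking logarithms, adding $\tfrac{t}{24}$, dividing by $t^{1/3}$, and adding back the parabola $\tfrac{u^2}{2}$, the linear tilt $-wu$ and the cross term $uw$ cancel, leaving $\{H_t(u+w)\}_u\overset{\mathrm{law}}{=}\{H_t(u)\}_u$. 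Hence the probability in the statement is independent of $y$, and it suffices to control $\mathbb{P}\bigl(\sup_{x\in[0,\eps]}|H_t(x)-H_t(0)|\ge\eps^{1/2}m\bigr)$.

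For the case $y=0$ I would invoke \cite{Corwin-Ghosal-Hammond-21}, which provides exactly the needed bound on the increments of the parabolically corrected narrow-wedge field $H_t$ near a fixed point, \emph{uniformly for $t\ge1$}: the $\eps^{1/2}$ factor with $\eps\in(0,1]$ encodes the (nearly) $\tfrac12$-Hölder, locally Brownian regularity of $H_t$, and the $\tfrac32$-power in the exponent encodes the KPZ one-point fluctuation scale. If the estimate there is phrased for a single pair of points rather than for the supremum over $[0,\eps]$, I would upgrade it by a standard dyadic chaining argument: using that $x\mapsto H_t(x)$ is a.s.\ continuous (as $Z(t,\cdot\viiva 0,0)$ is a.s.\ positive and continuous), one writes $\sup_{x\in[0,\eps]}|H_t(x)-H_t(0)|$ as a telescoping sum over dyadic levels $n\ge0$ of $[0,\eps]$, applies the two-point bound (valid for every base point by the stationarity just proved) to each of the $O(2^n)$ increments of spacing $2^{-n}\eps$ with $m$ replaced by a suitable level-dependent multiple, and sums; the exponent $1/2$ survives because $\sum_n(2^{-n}\eps)^{1/2}<\infty$, and the $\tfrac32$-power tail survives the union bound over $O(2^n)$ events.

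The only substantive point is bookkeeping: one must confirm that the modulus-of-continuity estimate of \cite{Corwin-Ghosal-Hammond-21} holds with constants independent of $t\ge1$ and all the way down to small scales $\eps$. This cannot be reduced to a single scale $(t,\eps)=(1,1)$ by rescaling, since the stochastic heat equation has no exact diffusive scaling symmetry (a diffusive rescaling changes the noise strength). By contrast, uniformity over the base point $y$ is automatic from the stationarity reduction above, and the passage from two-point to supremum bounds is routine, so I expect the uniformity-in-$t$ input from \cite{Corwin-Ghosal-Hammond-21} to be the crux.
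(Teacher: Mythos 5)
The paper does not prove this lemma — it is simply quoted from \cite{Corwin-Ghosal-Hammond-21} (the text preceding it reads ``Now we quote a result on the increments of $H_t(x)$''). So there is no internal argument to compare against. Your reconstruction of what lies behind the citation is correct. In particular, the spatial stationarity of $\{H_t(x)\}_{x\in\RR}$ that you derive from the shear invariance \cref{eq:Z_shear} is right: with $\nu = wt^{-1/3}$ the shear produces an affine tilt on $\log Z$, and after recentering by $t/24$, rescaling by $t^{1/3}$, and adding back the parabola $x^2/2$, the drift $-wu$, the cross term $uw$, and the $\pm w^2/2$ offsets all cancel, giving $\{H_t(u+w)\}_u \overset{\mathrm{law}}{=} \{H_t(u)\}_u$. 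This correctly reduces the base point to $y=0$. You are also right that uniformity in $t\ge 1$ and down to small $\eps$ cannot be obtained by a rescaling trick (the noise strength changes under parabolic rescaling), so it must be a genuine input from \cite{Corwin-Ghosal-Hammond-21}; that is indeed the substance of the cited result. Whether your dyadic-chaining upgrade is actually needed depends on the precise phrasing in \cite{Corwin-Ghosal-Hammond-21} — their modulus-of-continuity estimate is already in essentially supremum form, so the chaining is likely superfluous, though harmless and standard if one starts from a two-point version. In short: the paper relies on the citation, and your proposal is a correct and slightly more explicit unpacking of it.
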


The preceding two lemmas combined with a chaining argument will let
us establish the following lemma on the maximum of the KPZ solution
on a compact domain.
\begin{lem}
\label{lem:KPZmax}Let $h$ solve the KPZ equation \cref{eq:KPZ}
with $h(0,\cdot)\equiv0$. Then, for each compact set $K\subseteq\RR$,
there exist constants $C<\infty$ and $c>0$ such that for all $t>1$
and $m\ge0$, we have
\begin{equation}
\mathbb{P}\left(\sup_{x\in K}\left|\frac{h(t,t^{2/3}x)+\frac{t}{24}}{t^{1/3}}\right|\ge m\right)\le C\e^{-cm^{3/4}}.\label{eq:htailbd}
\end{equation}
\end{lem}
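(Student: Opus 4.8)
The plan is to prove the tail bound \cref{eq:htailbd} by relating the KPZ height function to the quantity $H_t(x)$ controlled in \cref{lem:onept,lem:CGH_sup}, and then running a chaining argument over the compact set $K$. First I would observe that by definition of the rescaled field,
\[
\frac{h(t,t^{2/3}x)+\frac{t}{24}}{t^{1/3}} = \frac{\log\int_\RR Z(t,t^{2/3}x\viiva 0,y)\,\dif y + \frac{t}{24}}{t^{1/3}} = \frac{1}{t^{1/3}}\log\int_\RR \e^{t^{1/3}\mathcal H_t(x\viiva t^{-2/3}y)}\,\dif y,
\]
and after the substitution $y = t^{2/3}z$ this becomes $t^{-1/3}\log\int_\RR \e^{t^{1/3}\mathcal H_t(x\viiva z)}\,\dif z$ plus a harmless constant. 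Using $\mathcal H_t(x\viiva z) = H_t(x\viiva z) - \tfrac{(x-z)^2}{2}$, the Gaussian factor $\e^{-t^{1/3}(x-z)^2/2}$ localizes the integral to $z$ within $O(t^{-1/6}\sqrt{m})$ of $x$ on the event where $\sup_z (H_t(x\viiva z) - \eta(x-z)^2)$ is not too large; the standard way to make this precise is a Laplace-type upper and lower bound showing that, up to errors that are themselves controlled by tail bounds, the rescaled height at $x$ is comparable to $\sup_{|z-x|\le \rho} H_t(x\viiva z)$ for an appropriate $\rho$.

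The next step is to control $\sup_{x\in K}\bigl|t^{-1/3}\log\int_\RR \e^{t^{1/3}\mathcal H_t(x\viiva z)}\,\dif z\bigr|$ by a chaining/net argument. For the upper bound one covers $K$ (and a slightly enlarged neighborhood to accommodate the localization window) by $O(\eps^{-1})$ intervals of length $\eps$, applies \cref{lem:CGH_sup} on each to bound the oscillation of $H_t$ within an interval by $\eps^{1/2}m$ with probability $\ge 1 - C\e^{-cm^{3/2}}$, and applies the one-point bound \cref{lem:onept} at the net points with probability $\ge 1 - C\e^{-cm^{3/2}}$; summing over $O(\eps^{-1})$ points and then dyadically refining $\eps$ gives, after a union bound, a bound of the form $C\e^{-cm^{3/2}}$ (possibly with a worse constant $c$) on the event that $\sup_{x}|H_t(x\viiva 0)|$ — and hence, via shear/shift invariance as in the proof of \cref{lem:onept}, $\sup_{x,z:\,|x-z|\le\rho}|H_t(x\viiva z)|$ over the relevant range — exceeds $m$. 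The exponent is then degraded from $3/2$ to $3/4$ precisely to absorb (i) the $\log(\eps^{-1})$-type entropy factors from the chaining and (ii) the passage from the $\sup_z H_t$ control to the logarithm of the integral, where one pays an extra $\log t$ or $\sqrt{m}$-type factor that gets dominated once the exponent is lowered. A clean way to organize (ii): on the good event $\sup|H_t(x\viiva z)| \le m$ for $|x-z|\le\rho$ one has $\mathcal H_t(x\viiva z)\le m - \tfrac{(x-z)^2}{2}$, so $\int \e^{t^{1/3}\mathcal H_t(x\viiva z)}\dif z \le \e^{t^{1/3}m}\int \e^{-t^{1/3}(x-z)^2/2}\dif z = \e^{t^{1/3}m}\sqrt{2\pi}\,t^{-1/6}$, giving the upper bound $m + O(t^{-1/3}\log t) \le 2m$ for $t$ large; the contribution of $|x-z|>\rho$ is handled by the same tail bound together with the Gaussian decay. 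The matching lower bound restricts the integral to $|z-x|\le t^{-2/3}$ and uses \cref{lem:CGH_sup} to say $\mathcal H_t(x\viiva z)\ge -m - o(1)$ there.

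The main obstacle I anticipate is the interchange of the spatial supremum over $x\in K$ with the spatial integral over $z$ inside the $\log$: one needs uniform-in-$x$ control of the field $z\mapsto H_t(x\viiva z)$, not just for fixed $x$, and \cref{lem:onept,lem:CGH_sup} are stated for the one-parameter field $H_t(\cdot\viiva y)$ with $y$ fixed. The resolution is to reparametrize: by the shift and shear invariances of $Z$ (exactly as used in the proof of \cref{lem:onept}), the two-parameter family $\{H_t(x\viiva z)\}$ has, for each fixed difference $x-z$, the one-point law of $\mathcal H_t(0)$, and the increment bound \cref{lem:CGH_sup} applies in the $z$ variable with $y$ playing the role of the base point; one then runs the chaining over a two-dimensional net in $(x, x-z)$ but restricted to $|x-z|\le\rho \asymp t^{-1/6}\sqrt m$, which keeps the entropy of order $\log(\eps^{-1})$ after the Gaussian truncation. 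Once this uniform control is in hand, combining the Laplace upper/lower bounds with the net estimate and a final dyadic sum over scales $\eps = 2^{-j}$ yields \cref{eq:htailbd} with the stated stretched-exponential rate $C\e^{-cm^{3/4}}$.
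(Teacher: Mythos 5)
Your outline of Step 1 (Laplace reduction to $t^{-1/3}\log\int_\RR\e^{t^{1/3}\mathcal H_t(x\viiva y)}\,\dif y$) and the decision to chain the two-parameter field $H_t(x\viiva y)$ using \cref{lem:onept,lem:CGH_sup} match the paper's proof. However, there is a genuine gap in the decomposition, and your explanation of the exponent $3/4$ is incorrect.

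The gap: you condition on the event $\sup_{|x-z|\le\rho}|H_t(x\viiva z)|\le m$ and then write $\mathcal H_t(x\viiva z)\le m-\tfrac{(x-z)^2}{2}$ inside $\int_{\RR}\e^{t^{1/3}\mathcal H_t}\,\dif z$, but that pointwise bound is only granted on $|x-z|\le\rho$; the integral runs over all $z$, and the good event says nothing about $H_t(x\viiva z)$ for $|x-z|>\rho$. You cannot enlarge the good event to $\sup_{z\in\RR}|H_t(x\viiva z)|\le m$ because the field $z\mapsto H_t(x\viiva z)$ is locally Brownian with $\mathrm{Airy}_1$-type stationary one-point marginals, so its unweighted sup over the whole line is almost surely infinite. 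Some growth allowance in $|x-z|$ is unavoidable, and the dominant contribution to the integral (and to the eventual bound on the log-integral) then comes precisely from that tail, not from the Gaussian factor near $z=x$. Saying the $|x-z|>\rho$ contribution is ``handled by the same tail bound together with the Gaussian decay'' is exactly the missing step: you need a quantitative, uniform-in-$(x,z)$ growth bound on $H_t$ that is summable against the Gaussian, and this is where the paper's weighted sup $\sup_{x\in K,\,y\in\RR}\bigl|H_t(x\viiva y)\bigr|/[\log(|y|+2)]^{2/3}$ enters.

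Relatedly, your explanation of where $3/4$ comes from does not hold up. Your ``clean'' computation gives the bound $m+O(t^{-1/3}\log t)\le 2m$ on the good event of probability $\ge 1-C\e^{-cm^{3/2}}$; if that were the whole story, the lemma would hold with exponent $3/2$, not $3/4$, and neither ``entropy factors'' nor ``$\sqrt m$-type factors'' would degrade it. In the paper, the $3/4$ arises for a completely different and specific reason: after controlling the weighted sup at level $m$, the log weight is crudely dominated by a linear one, $[\log(|y|+2)]^{2/3}\le|x-y|+A$, giving $|H_t(x\viiva y)|\le m(|x-y|+A)$; the Laplace integral then peaks at $|x-y|\approx m$ rather than at $|x-y|=O(t^{-1/6})$, producing a bound $C_1m^2+C_2$ on the rescaled height (see \cref{eq:m2pC2bd}). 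Inverting $m\mapsto C_1m^2+C_2$ in the tail $\e^{-cm^{3/2}}$ is what yields $\e^{-cm^{3/4}}$. This quadratic loss is absent from your sketch, which is why your heuristic for the exponent does not match your own estimate.
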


\begin{proof}
  It clearly suffices to consider the case when $K$ is an interval of integer length.
The proof proceeds in several steps.
\begin{thmstepnv}
\item By definition, we have $h(t,t^{2/3}x)=\log\int_{\RR}Z(t,t^{2/3}x\viiva0,y)\,\dif y$,
so after a change of variables, we get
\begin{equation}
\frac{h(t,t^{2/3}x)+\frac{t}{24}}{t^{1/3}}=t^{-1/3}\log t^{2/3}+t^{-1/3}\log\int_{\RR}\e^{t^{1/3}\mathcal{H}_{t}(x\viiva y)}\,\dif y.\label{eq:hintermsoflogint}
\end{equation}
The first term on the right side goes to $0$ as $t\to\infty$, so
it suffices to obtain tail bounds on the random variable $t^{-1/3}\log\int_{\RR}\e^{t^{1/3}\mathcal{H}_{t}(x\viiva y)}\,\dif y$.
\item We claim that it suffices to show that there exist constants $C<\infty$
and $c>0$ such that, for $t\ge1$ and $m\ge0$, we have
\begin{equation}
\mathbb{P}\left(\sup_{x\in K,y\in\RR}\frac{\left|H_{t}(x\viiva y)\right|}{(|y|+1)^{2/3}}\ge m\right)\le C\e^{-cm^{3/2}}.\label{eq:bd_frak}
\end{equation}
First, we assume \cref{eq:bd_frak} and show how it implies \cref{eq:htailbd}.
Then we will prove \cref{eq:bd_frak} in Step~\ref{step:bdfrak}
below. Assume that for some $s\ge0$, the event in \cref{eq:bd_frak}
fails, i.e. for some $m\ge0$, we have
\begin{equation}
\sup_{x\in K,y\in\RR}\frac{\left|H_{t}(x\viiva y)\right|}{(|y|+1)^{2/3}}\le m.\label{eq:les}
\end{equation}
We will show, given \cref{eq:les}, there exist constants $C_{1},C_{2}<\infty$,
independent of $m$, such that
\begin{equation}
\sup_{x\in K}\left|t^{-1/3}\log\int_{\RR}\e^{t^{1/3}\mathcal{H}_{t}(x\viiva y)}\,\dif y\right|\le C_{1}m^{2}+C_{2},\label{eq:m2pC2bd}
\end{equation}
which will imply \cref{eq:htailbd} by \cref{eq:hintermsoflogint}.
Thus we now prove \cref{eq:m2pC2bd} assuming \cref{eq:les}.
We first note that there is a constant $A>0$ such that for all $x\in K$
and $y\in\RR$, we have
\[
(|y| + 1)^{2/3}\le|x-y|+A.
\]
Then, since we are assuming that \cref{eq:les} holds, we see
that
\[
\left|H_{t}(x\viiva y)\right|\le m(|x-y|+A)\qquad\text{for all }x\in K,y\in\RR.
\]
Then, for $x\in K$, we obtain the upper bound
\begin{align*}
t^{-1/3}\log\int_{\RR}\e^{t^{1/3}\mathcal{H}_{t}(x\viiva y)}\,\dif y & \le mA+t^{-1/3}\log\int_{\RR}\exp\left\{ -t^{1/3}\left(\frac{(x-y)^{2}}{2}-m|x-y|\right)\right\} \,\dif y\\
 & =mA+t^{-1/3}\log\left(2\int_{0}^{\infty}\exp\left\{ -t^{1/3}\left(\frac{y^{2}}{2}-my\right)\right\} \,\dif y\right)\\
 & \le mA+\frac{m^{2}}{2}+t^{-1/3}\log\left(2\int_{\RR}\exp\left\{ -t^{1/3}y^{2}/2\right\} \,\dif y\right),
\end{align*}
and the last term on the right side is independent of $m$ and goes
to $0$ as $t\to\infty$. Furthermore, for $x\in[0,1]$ and $m>t^{-1/6}$,
we have the lower bound
\begin{align*}
t^{-1/3}\log\int_{\RR}\e^{t^{1/3}\mathcal{H}_{t}(x\viiva y)}\,\dif y & \ge-mA+t^{-1/3}\log\int_{\RR}\exp\left\{ -t^{1/3}\left(\frac{(x-y)^{2}}{2}+m|x-y|\right)\right\} \,\dif y\\
 & =-mA+t^{-1/3}\log\left(2\int_{0}^{\infty}\exp\left\{ -t^{1/3}\left(\frac{y^{2}}{2}+my\right)\,\dif y\right\} \right)\\
 & =-mA+\frac{m^{2}}{2}+t^{-1/3}\log(2t^{-1/6})+t^{-1/3}\log\int_{mt^{1/6}}^{\infty}\e^{-y^{2}/2}\,\dif y\\
 & \ge-mA+t^{-1/3}\log\left(\frac{m^{2}t^{1/3}-1}{m^{3}t^{1/2}}\right)-1\\
 & \ge-mA-\frac{m^{3}t^{1/6}}{m^{2}t^{1/3}-1}-1,
\end{align*}
and this is greater than $-C_{1}m^{2}-C_{2}$ for constants $C_{1},C_{2}<\infty$,
which completes the proof of \cref{eq:m2pC2bd}. In the penultimate
step, we used the standard Gaussian tail bound (see e.g. \cite[Theorem 1.2.6]{Durrett})
\[
\int_{z}^{\infty}\e^{-y^{2}/2}\,\dif y\ge(z^{-1}-z^{-3})\e^{-z^{2}/2},
\]
and in the last step, we used the bound $\log z\ge-z^{-1}$.
\item \label{step:bdfrak}Now we prove \cref{eq:bd_frak}. First, let
$\eps\in(0,1]$, and assume that 
\begin{equation}
|y_{1}-y_{2}|\vee|x_{1}-x_{2}|\le\eps.\label{eq:y1y2x1x2assumption}
\end{equation}
Then we have
\begin{align}
\mathbb{P} & \left(\left|H_{t}(x_{2}\viiva y_{2})-H_{t}(x_{1}\viiva y_{1})\right|\ge m\eps^{1/2}\right)\nonumber \\
 & \le\mathbb{P}\left(\left|H_{t}(x_{2}\viiva y_{2})-H_{t}(x_{1}\viiva y_{2})\right|\ge\frac{m\eps^{1/2}}{2}\right)+\mathbb{P}\left(\left|H_{t}(x_{1}\viiva y_{2})-H_{t}(x_{1}\viiva y_{1})\right|\ge\frac{m\eps^{1/2}}{2}\right)\nonumber \\
 & =\mathbb{P}\left(\left|H_{t}(x_{2}-y_{2})-H_{t}(x_{1}-y_{2})\right|\ge\frac{m\eps^{1/2}}{2}\right)+\mathbb{P}\left(\left|H_{t}(x_{1}-y_{2})-H_{t}(x_{1}-y_{1})\right|\ge\frac{m\eps^{1/2}}{2}\right)\nonumber\\&\le C\e^{-cm^{3/2}}\label{eq:setupforchaining}
\end{align}
for constants $C<\infty$ and $c>0$. In the first inequality we used
a union bound, in the identity we used translation-invariance, and
in the last inequality we used \cref{lem:CGH_sup} twice.

Now, for $b\in\mathbb{N}$, we partition the rectangle $K\times[-b,b]$ into $N(b)\coloneqq 2|K|b$ squares
of side length $1$, enumerated as $S_{1},\ldots,S_{N(b)}$, and let
$(x_{i},y_{i})$ be the center point of $S_{i}$.%
For each $i$, the bound \cref{eq:setupforchaining} implies
that the assumptions of \cref{lem:chaining} hold with $d=2$,
$\alpha_{i}=1/2$, $\beta_{i}=3/2$, $r_{i}=1$, and $T=S_{i}$, and
so we obtain constants $C<\infty$ and $c>0$ (independent of $i$,
$t$, and $b$) such that, for each $m\ge0$, we have
\begin{align}
\mathbb{P} & \left(\sup_{\substack{((x_{1},x_{2}),(y_{1},y_{2}))\in S_{i}^{2}}
}\left[\frac{\left|H_{t}(x_{2}\viiva y_{2})-H_{t}(x_{1}\viiva y_{1})\right|}{g(|y_{2}-y_{1}|)+g(|x_{2}-x_{1}|)}\right]\ge m\right)\le C\e^{-cm^{3/2}},\label{eq:sIbd}
\end{align}
where we have defined the nonnegative continuous function
\[
g(z)=\begin{cases}
z^{1/2}\left(\log\frac{2}{z}\right)^{2/3}, & z\in(0,1];\\
0, & z=0.
\end{cases}
\]

Now, if we let
\[
A\coloneqq1+2\sup_{z\in(0,1]}g(z).
\]
then we obtain using \cref{eq:sIbd,lem:onept}
that
\begin{align*}
\mathbb{P} & \left(\sup_{x\in K,y\in[-b,b]}\left|H_{t}(x\viiva y)\right|\ge Am\right)\\
 & \le\sum_{i=1}^{N(b)}\left(\mathbb{P}\left(\sup_{(x,y)\in S_{i}}\frac{\left|H_{t}(x\viiva y)-H_{t}(x_{i}\viiva y_{i})\right|}{g(|y-y_{i}|)+g(|x-x_{i}|)+1}\ge\frac{m}{2}\right)+\mathbb{P}\left(\left|H_{t}(x_{i}\viiva y_{i})\right|\ge\frac{m}{2}\right)\right)\\
 & \le Cb\e^{-cm^{3/2}}
\end{align*}
for new constants $C<\infty$ and $c>0$ that do not depend on $b$
or $t$. Then we obtain
\begin{align*}
\mathbb{P}\left(\sup_{x\in K,y\in\RR}\frac{\left|H_{t}(x\viiva y)\right|}{A\left(|y|+1\right)^{2/3}}\ge m\right) & \le\sum_{b=1}^{\infty}\mathbb{P}\left(\sup_{x\in K,b-1\le|y|<b}|H_{t}(x\viiva y)|\ge Am\left(b+1\right)^{2/3}\right)\\
 & \le\sum_{b=1}^{\infty}Cb\e^{-cm^{3/2}(b+1)}\le C'\e^{-c'm^{3/2}}
\end{align*}
for new constants $C',c'>0$. This completes the proof of \cref{eq:bd_frak}.\qedhere
\end{thmstepnv}
\end{proof}
The following lemma is the key to checking Assumption~\ref{enu:to0}
in \cref{lem:h_to_b}.
\begin{lem}
\label{lem:assumption4key}Let $h$ solve the KPZ equation \cref{eq:KPZ}
with $h(0,\cdot)\equiv0$. Then, for any $\eps>0$, we have the convergence
\[
t^{-1/3}\sup_{x\in\RR}\left[|h(t,x)-h(t,0)|-\eps|x|\right]\to0\qquad\text{in probability as }t\to\infty.
\]
\end{lem}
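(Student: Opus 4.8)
The plan is to pass to the KPZ-fixed-point scaling, where the statement becomes a bound on how slowly the rescaled solution can grow at spatial infinity. Substituting $x=t^{2/3}u$ and writing $\mathfrak h_t(u)\coloneqq t^{-1/3}\bigl(h(t,t^{2/3}u)+\tfrac t{24}\bigr)$, the quantity in question equals
\[
S_t\coloneqq\sup_{u\in\RR}\Bigl[\,|\mathfrak h_t(u)-\mathfrak h_t(0)|-\eps t^{1/3}|u|\,\Bigr],
\]
which is $\ge0$ (take $u=0$), so it suffices to show $\mathbb P(S_t>\eta)\to0$ for each $\eta>0$. First I would fix $\beta\in(0,1/3)$ and split the supremum into the three regions $|u|\le t^{-\beta}$, $t^{-\beta}\le|u|\le1$, and $|u|\ge1$.

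For $|u|\le t^{-\beta}$, \cref{prop:KPZ_eq_conv} (after the change of spatial variable $x\mapsto 2^{1/3}x$) shows that $\mathfrak h_t$ restricted to $[-1,1]$ converges in law in $\mathcal C([-1,1])$ to a process with continuous paths, hence is tight there; tightness yields a uniform-in-$t$ modulus of continuity, so $\sup_{|u|\le t^{-\beta}}|\mathfrak h_t(u)-\mathfrak h_t(0)|\to0$ in probability and this region contributes nothing. For $t^{-\beta}\le|u|\le1$, \cref{lem:KPZmax} with $K=[-1,1]$ makes $\sup_{|u|\le1}|\mathfrak h_t(u)|$ tight uniformly in $t$, while the penalty satisfies $\eps t^{1/3}|u|\ge\eps t^{1/3-\beta}\to\infty$ (here $\beta<1/3$ is used), so the restricted supremum tends to $-\infty$ in probability.

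The remaining region $|u|\ge1$ is the main obstacle, since there I must show $\mathfrak h_t(u)$ grows in $|u|$ much more slowly than $\eps t^{1/3}|u|$. Here I would use \cref{eq:hintermsoflogint}, which gives $\mathfrak h_t(u)=t^{-1/3}\log t^{2/3}+t^{-1/3}\log\int_\RR\e^{t^{1/3}(H_t(u\viiva y)-(u-y)^2/2)}\,\dif y$, together with the chaining bound behind \cref{eq:bd_frak}: rerunning the argument from the proof of \cref{lem:KPZmax} with the final sum taken over unit cells in \emph{both} the $x$- and $y$-variables (legitimate by the translation invariance \cref{eq:Z_shift} of $Z$) produces constants $C,c$ such that, for all $t\ge1$ and $m\ge0$,
\[
\mathbb P\Bigl(\sup_{x,y\in\RR}\frac{|H_t(x\viiva y)|}{(\log(|x|+2))^{2/3}+(\log(|y|+2))^{2/3}}\ge m\Bigr)\le C\e^{-cm^{3/2}};
\]
the series converges because composing the $2/3$-power weight with the $3/2$-power tail produces polynomial-in-cell-index decay in each variable (and $(P+Q)^{3/2}\ge P^{3/2}+Q^{3/2}$ handles the two-variable bookkeeping). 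On the event that this supremum is at most $m$, the Gaussian factor $\e^{-t^{1/3}(u-y)^2/2}$ dominates the subpolynomial growth of $H_t(u\viiva y)$ in $y$, and a routine splitting of the $y$-integral gives $\mathfrak h_t(u)\le 1+C_1m|u|+C_2(m)$ for all $|u|\ge1$ once $t\ge t_0(m)$, with $C_1$ absolute. Since $|\mathfrak h_t(0)|$ is tight uniformly in $t$ by \cref{lem:KPZmax}, for $t$ large the expression $\mathfrak h_t(u)-\mathfrak h_t(0)-\eps t^{1/3}|u|$ is, on a high-probability event, at most $1+C_2(m)+|\mathfrak h_t(0)|-(\eps t^{1/3}-C_1m)|u|$, which is $\le\eta$ uniformly over $|u|\ge1$ because $\eps t^{1/3}-C_1m>0$. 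Taking $m$ large enough (depending on the target error probability) that the chaining event has high probability, and then $t$ large, controls this region; the three bounds together give $\mathbb P(S_t>\eta)\to0$. The only genuinely delicate point is the weight-versus-tail bookkeeping in the two-variable chaining; the rest is Gaussian-integral estimates and soft tightness arguments.
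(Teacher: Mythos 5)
Your overall architecture — rescale to $\mathfrak h_t$, split the supremum into a near region controlled by convergence to the KPZ fixed point and a far region controlled by a chaining bound plus the quadratic Gaussian penalty — matches the paper's proof in spirit. The paper's decomposition is coarser (it reflects to $x\ge0$, handles $[0,k]$ directly via the coupled convergence to the fixed point, and then does a union bound over unit intervals in $[k,\infty)$ using \cref{lem:KPZmax} as a black box together with a Chebyshev bound and a summable series), whereas you split more finely into $|u|\le t^{-\beta}$, $t^{-\beta}\le|u|\le 1$, $|u|\ge1$; your first two regions together do essentially what the paper's ``second term'' argument does. For the far region you propose to strengthen \cref{eq:bd_frak} to a two-variable version with $x$ ranging over all of $\RR$ and a weight in $x$ as well, and then obtain a pointwise-in-$u$ bound on $\mathfrak h_t(u)$ directly. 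That strengthening is plausible — the paper's ingredients (\cref{lem:onept}, \cref{lem:CGH_sup}, \cref{lem:chaining}) are already uniform in $x$, and your observation that $(P+Q)^{3/2}\ge P^{3/2}+Q^{3/2}$ makes the double series converge — but it is more work than the paper's route, which simply reuses \cref{lem:KPZmax} on each shifted unit interval and sums the resulting second moments.

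The one genuine gap is in the far-region step: you establish only the \emph{upper} bound $\mathfrak h_t(u)\le 1+C_1m|u|+C_2(m)$ and then in the final display control only $\mathfrak h_t(u)-\mathfrak h_t(0)-\eps t^{1/3}|u|$, without the absolute value. The quantity in the statement is $|\mathfrak h_t(u)-\mathfrak h_t(0)|-\eps t^{1/3}|u|$, so you also need a matching \emph{lower} bound on $\mathfrak h_t(u)$ on the chaining event, i.e.\ $\mathfrak h_t(u)\ge -C_1'm|u|-C_2'(m)$ for $|u|\ge1$. This does hold: on the event $\{|H_t(x\viiva y)|\le m[(\log(|x|+2))^{2/3}+(\log(|y|+2))^{2/3}]\ \forall x,y\}$, restricting the $y$-integral in \cref{eq:hintermsoflogint} to $|y-u|\le1$ gives
\[
\int_\RR\e^{t^{1/3}(H_t(u\viiva y)-(u-y)^2/2)}\,\dif y\ \ge\ 2\,\e^{-t^{1/3}\bigl(2m(\log(|u|+3))^{2/3}+\tfrac12\bigr)},
\]
which after taking $t^{-1/3}\log$ yields a lower bound of the same shape as the upper one (cf.\ the lower-bound step in the paper's proof of \cref{lem:KPZmax}). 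You should state this explicitly and then control $\mathfrak h_t(0)-\mathfrak h_t(u)-\eps t^{1/3}|u|$ by the same argument; without it the far-region estimate only bounds one side of the absolute value.
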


\begin{proof}
By the spatial reflection invariance \cref{eq:Z_reflect}, it
suffices to prove that
\[
t^{-1/3}\sup_{x\ge0}\left[|h(t,x)-h(t,0)|-\eps x\right]\to0\qquad\text{in probability as }t\to\infty.
\]
We write
\begin{align}
t^{-1/3}\sup_{x\ge0}\left[|h(t,x)-h(t,0)|-\eps x\right] & =\sup_{x\ge0}\left[\left|\frac{h(t,t^{2/3}(t^{-2/3}x))-h(t,0)}{t^{1/3}}\right|-\eps t^{-1/3}x\right]\nonumber \\
 & =\sup_{y\ge0}\left[\left|\frac{h(t,t^{2/3}y)-h(t,0)}{t^{1/3}}\right|-\eps t^{1/3}y\right].\label{eq:sup_re}
\end{align}
Choose an integer $K>\eps^{-1}$. Note that the supremum in \cref{eq:sup_re}
is nonnegative because the quantity is $0$ when $y=0$. Hence, for
the supremum in \cref{eq:sup_re} to not be obtained in $[0,k]$,
the supremum over $y\in[k,\infty)$ must be positive. Then, for $\delta>0$,
we have
\begin{align}
\mathbb{P} & \left(t^{-1/3}\sup_{x\ge0}\left[|h(t,x)-h(t,0)|-\eps x\right]>\delta\right)\nonumber \\
 & \le\mathbb{P}\left(\left|h(t,0)+\frac{t}{24}\right|>t^{2/3}\right)+\mathbb{P}\left(\sup_{y\in[0,k]}\left[\left|\frac{h(t,t^{2/3}y)-h(t,0)}{t^{1/3}}\right|-\eps t^{1/3}y\right]>\delta\right)\nonumber \\
 & \qquad+\sum_{i=k}^{\infty}\mathbb{P}\left(\sup_{y\in[i,i+1]}\left|\frac{h(t,t^{2/3}y)+\frac{t}{24}}{t^{1/3}}\right|>t^{1/3}(\eps i-1)\right).\label{eq:boundprobgtdelta}
\end{align}

We consider each of the terms on the right side of \cref{eq:boundprobgtdelta}
in turn. The first term goes to $0$ because of the convergence in
law of $t^{-1/3}(h(t,0)+\frac{t}{24})$ to a Tracy--Widom GOE random
variable. (See \cref{lem:TW_conv}.)

The second term of \cref{eq:boundprobgtdelta} goes to zero by
the convergence of the KPZ equation to the KPZ fixed point uniformly
on compact sets (see \cref{prop:KPZ_eq_conv}). Specifically,
we can couple $h$ to the KPZ fixed point $\mathfrak{h}$ started
from $\mathfrak{h}(0,\cdot)\equiv0$ such that, with probability $1$,
\begin{align*}
\adjustlimits\limsup_{t\to\infty}\sup_{y\in[0,k]} & \left[\left|\frac{h(t,t^{2/3}y)-h(t,0)}{t^{1/3}}\right|-\eps t^{1/3}y\right]\\
 & \le\adjustlimits\lim_{T\to\infty}\limsup_{t\to\infty}\sup_{y\in[0,k]}\left[\left|\frac{h(t,t^{2/3}y)-h(t,0)}{t^{1/3}}\right|-\eps T^{1/3}y\right]\\
 & =\adjustlimits\lim_{T\to\infty}\sup_{y\in[0,k]}\left[\left|2^{-1/3}\mathfrak{h}(0,2^{-1/3}y)-2^{-1/3}\mathfrak{h}(t,0)\right|-\eps T^{1/3}y\right]=0,
\end{align*}
where in the last step we used the continuity of the process $\mathfrak{h}$.

For the third term of \cref{eq:boundprobgtdelta}, we use\textbf{
}\cref{lem:KPZmax}, along with the spatial homogeneity of $h$,
to show that there is a constant $C$ such that
\[
\sup_{i\in\RR}\EE\left(\sup_{y\in[i,i+1]}\left|\frac{h(t,t^{2/3}y)+\frac{t}{24}}{t^{1/3}}\right|\right)^{2}\le C.
\]
This means that
\[
\sum_{i=k}^{\infty}\mathbb{P}\left(\sup_{y\in[i,i+1]}\left|\frac{h(t,t^{2/3}y)+\frac{t}{24}}{t^{1/3}}\right|>t^{1/3}(\eps i-1)\right)\le\frac{C}{t^{2/3}}\sum_{i=k}^{\infty}\frac{1}{(\eps i-1)^{2}}\to0\qquad\text{as }t\to\infty.
\]
This completes the proof.
\end{proof}
We have now assembled all of the necessary ingredients to prove \cref{thm:btflucts}(\ref{enu:flatic}).
\begin{proof}[Proof of \cref{thm:btflucts}\textup{(\ref{enu:flatic})}.]
We again use the general framework of \cref{lem:h_to_b} applied
to $\mathcal{J}(t,x)=h_{+}(t,x)-h_{-}(t,x)$, and we have to check
the assumptions. Assumptions~\ref{enu:cont} and~\ref{enu:drift}
are direct consequences of \cref{prop:Xpreserved}. We take $\alpha=1/3$
and $Y=\frac{X_{1}-X_{2}}{2}$, where $X_{1}$ and $X_{2}$ are independent
Tracy--Widom GOE random variables. Then \cref{thm:zeroval-flat}
implies that $t^{-1/3}\mathcal{J}(t,0)$ converges in distribution
to $Y$ as $t\to\infty$, and so Assumption~\ref{enu:dist} is satisfied
with these choices. To check Assumption~\ref{enu:to0}, we note
that
\begin{align*}
t^{-1/3}\sup_{x\in\RR}\left[\left|\mathcal{J}(t,x)-\mathcal{J}(t,0)-2\theta x \right|-\eps|x|\right] & \le t^{-1/3}\sup_{x\in\RR}\left[\left|h_{+}(t,x)-h_{+}(t,0)-\theta x \right|-\frac{\eps}{2}|x|\right]\\
 & \quad+t^{-1/3}\sup_{x\in\RR}\left[\left|h_{-}(t,x)-h_{-}(t,0)-\theta x \right|-\frac{\eps}{2}|x|\right],
\end{align*}
so it suffices to show the convergence of each of the two terms on
the right to zero in probability. We prove the first, as the second
is symmetrical. By the shear-invariance \cref{eq:h_shear}, we
have
\[
\sup_{x\in\RR}\left[\left|h_{+}(t,x)-h_{+}(t,0)-\theta x \right|-\frac{\eps}{2}|x|\right]\overset{\mathrm{law}}{=}\sup_{x\in\RR}\left[\left|h_{0}(t,x)-h_{0}(t,0)\right|-\frac{\eps}{2}|x|\right],
\]
and then \cref{lem:assumption4key} implies that Assumption~\ref{enu:to0}
holds. With the assumptions verified, \cref{lem:h_to_b} implies \cref{eq:flat-bt}
and the proof is complete.
\end{proof}

\appendix

\section{Technical lemmas}

Here we state a few technical lemmas that are useful at various points
in our arguments. The following chaining result is due to Dauvergne
and Virág; for simplicity, we state a version somewhat specialized
to our needs.
\begin{lem}[{\cite[Lemma 3.3]{Dauvergne-Virag-18}}]
\label{lem:chaining}Let $T=I_{1}\times\cdots\times I_{d}$ be a
product of bounded real intervals of lengths $b_{1},\ldots,b_{d}>0$.
Let $\mathcal{H}\colon T\to\RR$ be a random continuous function.
Assume that, there are constants $C<\infty$ and $c>0$ such that
for every $i\in\{1,\ldots,d\}$, there exist $\alpha_{i}\in(0,1)$,
$\beta_{i},r_{i}>0$ such that
\[
\mathbb{P}\left(\left|\mathcal{H}(x+e_{i}u)-\mathcal{H}(x)\right|\ge mu^{\alpha_{i}}\right)\le C\e^{-cm^{\beta_{i}}}
\]
for every unit coordinate vector $e_{i}$, every $m\ge0$, and every
$x,x+ue_{i}\in T$ with $u\in(0,r_{i}]$. Set $\beta=\min_{i}\beta_{i}$,
$\alpha=\max_{i}\alpha_{i}$, and $r=\max_{i}r_{i}^{\alpha_{i}}$.
Then we have
\begin{align*}
\mathbb{P}&\left(\sup\left\{ \frac{|\mathcal{H}(x+y)-\mathcal{H}(x)|}{\sum_{i=1}^{d}|y_{i}|^{\alpha_{i}}\left(\log\left(\frac{2r^{1/\alpha_{i}}}{|y_{i}|}\right)\right)^{1/\beta_{i}}}\st\begin{array}{c}
x,y+x\in T\text{ and}\\
0<|y_{i}|\le r_{i}\text{ for }1\le i\le d
\end{array}\right\} \ge m\right)\\&\qquad\le CC_{0}\e^{-c_{1}m^{\beta}}\prod_{i=1}^{d}\frac{b_{i}}{r_{i}}
\end{align*}
for constants $C_{0}<\infty$ and $c_{1}>0$ depending only on $\alpha_{1},\ldots,\alpha_{d},\beta_{1},\ldots,\beta_{d},d,c$,
and in particular not on $b_{1},\ldots,b_{d},C,r_{1},\ldots,r_{d}$.
\end{lem}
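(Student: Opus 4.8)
The statement is quoted verbatim (up to specialization) from \cite[Lemma~3.3]{Dauvergne-Virag-18}, so the shortest route is simply to invoke that result, observing that the present formulation is the special case in which the index set there is a finite product of bounded real intervals. If instead one wants a self-contained argument, the plan is to run a standard anisotropic Kolmogorov-type multiscale chaining argument, which I sketch now.

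First I would rescale each coordinate so that $I_i$ has unit length and the admissible increments satisfy $|y_i|\le 1$ after dividing by $r_i$; this reduces us to the case $r_i=1$ for all $i$, absorbing the resulting factors into the constants and into the $r^{1/\alpha_i}$ appearing in the logarithm. For each level $n\ge 0$ introduce the dyadic net $\Pi_n\subset T$ consisting of points whose $i$-th coordinate is an integer multiple of $2^{-n}b_i$; the cardinality of $\Pi_n$ is comparable to $2^{nd}\prod_i(b_i/r_i)$. Any two points $x,x+y\in T$ with $0<|y_i|\le r_i$ can be joined by a finite chain of intermediate net points in which the $j$-th link moves in a single coordinate direction by at most the relevant dyadic scale, so that the one-coordinate increment hypothesis applies to each link.

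Next, for a fixed level $n$ and fixed direction $i$, a union bound over the $O\bigl(2^{nd}\prod_k(b_k/r_k)\bigr)$ pairs of $\Pi_n$-neighbors differing in coordinate $i$ by $2^{-n}b_i$ shows that, outside an event of probability at most $C\,2^{nd}\prod_k(b_k/r_k)\,\e^{-c m_n^{\beta_i}}$, every such increment is at most $m_n(2^{-n}b_i)^{\alpha_i}$. Choosing $m_n\asymp m+n^{1/\beta_i}$ makes $\sum_n 2^{nd}\e^{-c m_n^{\beta_i}}$ summable, with total exceptional probability $\le CC_0\,\e^{-c_1 m^{\beta}}\prod_i(b_i/r_i)$ after summing also over the finitely many directions $i$. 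Telescoping the chain and summing the geometric series $\sum_{n\ge N} m_n(2^{-n}b_i)^{\alpha_i}$ from the first scale $N$ at which $2^{-N}b_i\approx|y_i|$ produces, for the contribution of the $i$-th coordinate, a bound of the form $|y_i|^{\alpha_i}$ times $\bigl(m+(\log(2r^{1/\alpha_i}/|y_i|))^{1/\beta_i}\bigr)$; summing over $i$ and factoring out $m$ yields exactly the weight $\sum_i |y_i|^{\alpha_i}(\log(2r^{1/\alpha_i}/|y_i|))^{1/\beta_i}$ in the statement. Continuity of $\mathcal{H}$ lets one pass from the dyadic nets to all of $T$.

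The main obstacle is the bookkeeping of constants: one must verify that $C_0$ and $c_1$ depend only on $\alpha_1,\dots,\alpha_d,\beta_1,\dots,\beta_d,d,c$ and not on the $b_i,r_i,C$, and one must be careful to keep the logarithm exponent in the $i$-th term equal to $1/\beta_i$ rather than the worse uniform $1/\beta$, which forces the summation of bad events to be organized direction by direction with direction-dependent choices of $m_n$. Since \cite{Dauvergne-Virag-18} carries out precisely this argument, for the purposes of this paper it suffices to cite \cite[Lemma~3.3]{Dauvergne-Virag-18} and note that \cref{lem:chaining} is the stated special case.
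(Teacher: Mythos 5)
Your proposal matches the paper exactly: the lemma is stated in the appendix with the attribution to \cite[Lemma~3.3]{Dauvergne-Virag-18} and no further proof is given, since it is a quoted (slightly specialized) form of that result. Your additional sketch of the anisotropic chaining argument is a reasonable outline of how one would prove it from scratch, but it is not needed for the paper.
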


We also use the following simple lemma.
\begin{lem}
\label{lem:R1moments}Let $B$ be a two-sided Brownian motion, with
arbitrary diffusivity, and let $\alpha,\lambda>0$. Then we have
\[
\EE\left[\left(\int_{-1}^{0}\e^{B(y)+\lambda y}\,\dif y\right)^{-\alpha}\right]<\infty.
\]
\end{lem}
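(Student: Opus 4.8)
The plan is to reduce the claim to a standard exponential-moment bound for the running maximum of Brownian motion. First I would remove the drift: since $\lambda y \ge -\lambda$ for every $y \in [-1,0]$, we have
\[
\int_{-1}^{0} \e^{B(y) + \lambda y}\,\dif y \ge \e^{-\lambda}\int_{-1}^{0}\e^{B(y)}\,\dif y,
\]
so $\bigl(\int_{-1}^{0}\e^{B(y)+\lambda y}\,\dif y\bigr)^{-\alpha} \le \e^{\alpha\lambda}\bigl(\int_{-1}^{0}\e^{B(y)}\,\dif y\bigr)^{-\alpha}$, and it suffices to bound the expectation of the right-hand side. Then I would use the trivial pointwise lower bound $B(y) \ge \min_{z\in[-1,0]}B(z)$, valid for all $y \in [-1,0]$, which gives
\[
\int_{-1}^{0}\e^{B(y)}\,\dif y \ge \e^{\,\min_{z\in[-1,0]}B(z)} = \e^{-M}, \qquad M \coloneqq \max_{z\in[-1,0]}\bigl(-B(z)\bigr) \ge 0
\]
(the inequality $M \ge 0$ holding because $B(0)=0$). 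Hence the random variable in the statement is dominated by $\e^{\alpha\lambda}\e^{\alpha M}$, and everything reduces to showing $\EE[\e^{\alpha M}] < \infty$.

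Finally I would invoke the reflection principle. The process $s \mapsto -B(-s)$, $s \in [0,1]$, is a Brownian motion with the same diffusivity $\sigma^2$ as $B$, and $M$ is its running maximum over $[0,1]$; therefore $\PP(M \ge a) = 2\,\PP(-B(-1) \ge a) \le 2\exp\bigl(-a^2/(2\sigma^2)\bigr)$ for all $a \ge 0$. Integrating,
\[
\EE[\e^{\alpha M}] = 1 + \alpha\int_0^\infty \e^{\alpha a}\,\PP(M \ge a)\,\dif a \le 1 + 2\alpha\int_0^\infty \exp\!\Bigl(\alpha a - \frac{a^2}{2\sigma^2}\Bigr)\,\dif a < \infty,
\]
since the Gaussian factor dominates the exponential. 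Combining the three displays proves the lemma.

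There is no real obstacle here; the argument is elementary. The only points requiring a small amount of care are bookkeeping of the diffusivity — which is why the lemma is phrased for arbitrary diffusivity, so that it applies to $B_2 - B_1$ (diffusivity $2$) appearing in \cref{eq:Rdef} — and the observation that enlarging the interval of integration from $[-1,0]$ to $(-\infty,0]$, as needed for the quantity $R$ in \cref{eq:Rdef}, only decreases the negative moment and is therefore covered by the bound above.
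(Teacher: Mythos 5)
Your proof is correct and follows essentially the same route as the paper's: both arguments lower-bound the integral by a constant times $\e^{\min_{[-1,0]}B}$, then apply the reflection principle to reduce the problem to a Gaussian tail. The only difference is presentational — the paper estimates the tail $\PP\bigl(\bigl(\int_{-1}^0\e^{B(y)+\lambda y}\dif y\bigr)^{-1}>x\bigr)$ and notes it decays faster than any polynomial in $x^{-1}$, whereas you directly bound the $\alpha$-th negative moment by $\EE[\e^{\alpha M}]$ and integrate the tail of the running maximum; these are the same computation repackaged.
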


\begin{proof}
For $z>0$, if $\min\limits_{y\in[-1,0]}B(y)>-z$, then $\int_{-1}^{0}\e^{B(y)+\lambda y}\,\dif y\ge c\e^{-z}$,
where $c=\frac{1-\e^{-\lambda}}{\lambda}>0$. Hence, for $x>1/c$,
we can estimate
\begin{align*}
\mathbb{P}\left(\left(\int_{-1}^{0}\e^{B(y)+\lambda y}\,\dif y\right)^{-1}>x\right) & \le\mathbb{P}\left(\min_{-1\le y\le0}B(y)\le-\log(cx)\right)\\
 & =\mathbb{P}\left(|B(-1)|>\log(cx)\right)\le\frac{2\e^{-(\log cx))^{2}}}{\log cx},
\end{align*}
where the last step follows by standard Gaussian tail bounds. (See
e.g. \cite[Theorem 1.2.6]{Durrett}.) We see that the right side is
smaller than any positive power of $x^{-1}$. In particular, all of
the positive moments of the random variable $\left(\int_{-1}^{0}\e^{B(y)+\lambda y}\,\dif y\right)^{-1}$
are finite.
\end{proof}
\printbibliography[heading=bibintoc]

\end{document}